\newcommand{\bQ}{\vmathbb{Q}}
\newcommand{\bN}{\vmathbb{N}}
\newcommand{\uf}{\underline{f}}
\newcommand{\bA}{\mathbf{A}} 
\newcommand{\bB}{\mathbf{B}}
\newcommand{\bU}{\mathbf{U}}
\newcommand{\bV}{\mathbf{V}}
\newcommand{\bW}{\mathbf{W}}
\newcommand{\bM}{\mathbb{M}}
\newcommand{\bT}{\mathbf{T}}
\newcommand{\bC}{\mathbf{C}}
\newcommand{\bD}{\mathbf{D}}
\newcommand{\ta}{{\bar{a}}}
\newcommand{\tb}{{\bar{b}}}
\newcommand{\tc}{{\bar{c}}}
\newcommand{\tu}{\bar{u}}
\newcommand{\tv}{\bar{v}}
\newcommand{\tx}{\bar{x}}
\newcommand{\ty}{\bar{y}}
\newcommand{\JEP}{\ensuremath{\operatorname{JEP}}\xspace}
\newcommand{\HP}{\ensuremath{\operatorname{HP}}\xspace}
\newcommand{\AP}{\ensuremath{\operatorname{AP}}\xspace}
\newcommand{\HAP}{\ensuremath{\operatorname{HAP}}\xspace}
\newcommand{\AEP}{\ensuremath{\operatorname{AEP}}\xspace}
\newcommand{\AEPn}{\ensuremath{\AEP^n}\xspace}
\newcommand{\HAPn}{\ensuremath{\HAP^n}\xspace}
\newcommand{\clone}{\mathfrak}
\newcommand{\monoid}{\mathfrak}
\newcommand{\group}{\mathfrak}
\newcommand{\class}{\mathcal}
\newcommand{\cover}{\mathcal}
\newcommand{\cat}{\mathscr}
\newcommand{\algebra}[1]{\vvmathbb{#1}}
\newcommand{\Met}{\operatorname{Met}}
\newcommand{\cMet}{\operatorname{cMet}}
\newcommand{\Age}{\operatorname{Age}}
\newcommand{\End}{\operatorname{End}}
\newcommand{\Aut}{\operatorname{Aut}}
\newcommand{\Pol}{\operatorname{Pol}}
\newcommand{\ar}{\operatorname{ar}}
\newcommand{\Tpp}{\operatorname{Tpp}} 
\newcommand{\Fraisse}{Fra\"\i{}ss\'e\xspace}
\newcommand{\injto}{\hookrightarrow}
\newcommand{\uSigma}{{\underline{\Sigma}}}
\newcommand{\komma}{\mathrel{\downarrow}}
\newcommand{\restr}{\mathord{\upharpoonright}}
\newcommand{\pushout}{\arrow[ur, phantom, "\urcorner", very near start]}
\newenvironment{enumeratewithhead}[1]{%
  \begin{trivlist}
  \item \textbf{#1}
  \begin{enumerate}
}{%
  \end{enumerate}
  \end{trivlist}
}
\theoremstyle{plain} 
\newtheorem{theorem}{Theorem}[section]
\newtheorem{proposition}[theorem]{Proposition}
\newtheorem{lemma}[theorem]{Lemma}
\newtheorem{corollary}[theorem]{Corollary} 
\theoremstyle{definition}
\newtheorem{definition}[theorem]{Definition}
\newtheorem{example}[theorem]{Example}
\newtheorem{observation}[theorem]{Observation}
\newtheorem*{construction}{Construction}
\theoremstyle{remark}
\newtheorem*{remark}{Remark}
\author[Ch.\,Pech]{Christian Pech}
\address{}
\email{cpech@freenet.de}
\urladdr{https://www.researchgate.net/profile/Christian\_Pech2}
\author[M.\,Pech]{Maja Pech} 
\address{Department of Mathematics\\University of Novi Sad} 
\email{maja@dmi.uns.ac.rs}
\urladdr{http://people.dmi.uns.ac.rs/~maja/}
\thanks{The first author received funding from the European Research Council under the European Community's Seventh Framework Programme (FP7/2007-2013 Grant Agreement no. 257039).\\
The second author was supported by the Ministry of Education and Science of the Republic of Serbia through Grant No.174018.}
\title[Polymorphism clones of homogeneous structures]{Polymorphism clones of homogeneous structures\\[2mm]{\scriptsize Universal Homogeneous Polymorphisms and Automatic Homeomorphicity}}
\subjclass[2010]{Primary 03C15, 08A70, 18A25, 08A35; Secondary 03C50, 03C40, 03C10, 03C05,08B25} 
 \keywords{clone, topological clone, reconstruction, homogeneous structure, automatic homeomorphicity, gate covering, axiomatic \Fraisse theory, comma category, universal homogeneous polymorphism, free amalgamation, generic poset}
\begin{document}
\begin{abstract}
	Every clone of functions comes naturally equipped with a topology---the topology of pointwise convergence. A clone $\clone{C}$ is said to have automatic homeomorphicity with respect to a class  $\class{C}$ of clones, if every clone-isomorphism of $\clone{C}$ to a member of $\class{C}$ is already a homeomorphism (with respect to the topology of pointwise convergence). In this paper we study automatic homeomorphicity-properties for polymorphism clones of countable homogeneous relational structures. To this end we introduce and utilize universal homogeneous polymorphisms. Next to two generic criteria for the automatic homeomorphicity of the polymorphism clones of free homogeneous structures we show that the polymorphism clone of the generic poset with reflexive  ordering has automatic homeomorphicity and that the polymorphism clone of the generic poset with strict ordering has automatic homeomorphicity with respect to countable $\omega$-categorical structures. Our results extend and generalize previous results by Bodirsky, Pinsker, and Pongr\'acz.
\end{abstract}
\maketitle
\section{Introduction}
A relational structure is called homogeneous if every isomorphism between finite substructures extends to an automorphism. Homogeneous structures play an important role in model theory because of their close relation to structures whose elementary theory admits quantifier elimination. Also, homogeneous structures form a major source of $\omega$-categorical structures.  

A clone is a set of finitary functions on a given base set that contains all projections and that is closed with respect to composition. Every concrete clone comes equipped with a canonical topology---the topology of pointwise convergence. It was shown by Bodirsky and Pinsker in \cite{BodPin15} that the polymorphism clone of an $\omega$-categorical structure determines this structure up to positive primitive bi-interpretability.
In this paper the authors  asked, which properties of an $\omega$-categorical structure are encoded in its polymorphism clone, considered as an abstract clone. In particular the question is, when can the  canonical topology of the polymorphism clone of a structure be reconstructed from its underlying abstract clone. First steps to find reasonably general  conditions were undertaken by Bodirsky, Pinsker and Pongr\'acz in \cite{BodPinPon17}. Our paper is build on their findings.

What is meant by ``reconstructing the canonical topology of a clone''?  There are several ways to give concrete meaning to the phrase: For a class $\class{K}$ of clones and a clone $\clone{C}\in\class{K}$ we may say that
\begin{enumerate}
  \item  $\clone{C}$ has \emph{reconstruction with respect to $\class{K}$} if whenever $\clone{C}$ is isomorphic to some clone $\clone{D}\in\class{K}$ (as an abstract clone), then there exists already an isomorphism between $\clone{C}$ and $\clone{D}$ that is a homeomorphism (with respect to the canonical topologies of $\clone{C}$ and $\clone{D}$, respectively), or    
  \item $\clone{C}$ has \emph{automatic homeomorphicity with respect $\class{K}$} if whenever $\clone{C}$ is isomorphic to some clone $\clone{D}\in\class{K}$ (as an abstract clone), then \emph{every} isomorphism between $\clone{C}$ and $\clone{D}$ is a homeomorphism.
\end{enumerate}
In this paper we are going to study the second (stronger) option. Note that automatic homeomorphicity is already a non-trivial concept if the class $\class{K}$ consists only of $\clone{C}$. In this case it says that every automorphism of $\clone{C}$ is an autohomeomorphism. 

It should be mentioned that our approach to automatic homeomorphicity is not that of a craftsman but of an engineer. That is, our goal is not, for every given homogeneous structure in question to find the shortest and most elegant proof that its polymorphism clone has automatic homeomorphicity. Rather it is our ambition to find methods as general as possible to show automatic homeomorphicity of the polymorphism clones of whole classes of structures at once. We do so by refining and industrializing the gate techniques that were introduced in \cite{BodPinPon17}. In particular:
\begin{enumerate}
  \item we introduce the notion of strong gate coverings,
  \item we show, how strong gate coverings can be used for showing automatic homeomorphicity of clones,
  \item we introduce the notion of universal homogeneous polymorphisms,
  \item we show that the existence of universal homogeneous polymorphisms of all finite arities for a relational structure implies that its polymorphism clone has a strong gate covering,
  \item we characterize all homogeneous structures that posses universal homogeneous polymorphisms of all finite arities by a property of their age,
\end{enumerate}
Thus we end up with a sufficient condition for the existence of strong gate coverings for polymorphism clones of homogeneous structures. In particular, we show the existence of strong gate coverings for the polymorphism clones of the following structures:
\begin{itemize}
  \item free homogeneous structures whose age has the homo-amalgamation property and is closed with respect to finite products,
  \item  the generic poset. %(with reflexive order relation),
\end{itemize}

The paper continues  with new  criteria for the automatic homeomorphicity of clones. In particular we show that the polymorphism clone of a free homogeneous structure $\bU$ has automatic homeomorphicity if 
\begin{enumerate}[label=(\roman*)]
 \item $\Age(\bU)$ has the homo-amalgamation property, 
 \item $\Age(\bU)$ is closed with respect to finite products,
 \item\label{condiii} all constant functions on $U$ are endomorphisms of $\bU$.
 \end{enumerate}
 Moreover, we show that in the above criterion condition \ref{condiii} can be replaced by the following two conditions:
 \begin{enumerate}
 	\item[(iii.a)] $\Aut(\bU)$ acts transitively on $U$,
 	\item[(iii.b)] $\overline{\Aut(\bU)}$ has automatic homeomorphicity.
 \end{enumerate}
Finally, we present a result on automatic homeomorphicity for two non-free homogeneous structures. In particular we show that the polymorphism clone of the generic poset with reflexive order-relation has automatic homeomorphicity and that the polymorphism clone of the generic poset with strict order-relation has automatic homeomorphicity with respect to the class of countable $\omega$-categorical structures.

Some words about the techniques employed by us. For the part about universal homogeneous polymorphisms we use \emph{axiomatic \Fraisse theory}. This is a version of \Fraisse theory, introduced by Droste and G\"obel in \cite{DroGoe92}, that completely abstracts from structures. It is formalized in the language of category theory and encompasses model theoretic  \Fraisse-theory  (including, e.g., Hrushovski's construction and Solecki's projective \Fraisse-limits). The theory has meanwhile been applied, developed, and extended in several works, including \cite{Ros97,Kir09,PecPec12,PecPec13a,Kub15,Car14,Kub14,PecPec16}. We build upon the results from \cite{PecPec13a} on universal homogeneous objects in comma-categories and extend them, in order to obtain our characterization of the existence of universal homogeneous polymorphisms for homogeneous structures. 

Another important tool in our research has been a topological version of Birkhoff's theorem due to Bodirsky and Pinsker \cite{BodPin15} in a rather surprising combination with results about polymorphism homogeneous structures and retracts of \Fraisse-limits (cf.\ \cite{PecPec15,PecPec13a}).

\section{Preliminaries}
\subsection{Clones}
Let $A$ be a set. For $n\in\bN\setminus\{0\}$ we define 
\[
    \clone{O}_A^{(n)}:=\{f\mid f\colon   A^n\to A\},\text{ and} \quad \clone{O}_A := \bigcup_{n\in\bN\setminus\{0\}} \clone{O}_A^{(n)}.
\]
In general, for a set $C\subseteq \clone{O}_A$ we will write $C^{(n)}$ for the set of all $n$-ary functions from $C$. 
We distinguish certain functions in $\clone{O}_A$---the \emph{projections}: For $n\in\bN\setminus\{0\}$, and for $i\in\{1,\dots,n\}$  the projection $e_i^n\in\clone{O}_A^{(n)}$ is defined by 
$e_i^n \colon   (x_1,\dots,x_n)\mapsto x_i$.
Further we define the set of all projections on $A$: $\clone{J}_A:=\big\{e_i^n\mid n\in\bN\setminus\{0\},\, i\in\{1,\dots,n\}\big\}$.
For all $n,m\in\bN\setminus\{0\}$, whenever $f\in \clone{O}_A^{(n)}$, and $g_1,\dots,g_n\in\clone{O}_A^{(m)}$, then the \emph{composition} $f\circ\langle g_1,\dots,g_n\rangle\in\clone{O}_A^{(m)}$ is defined according to 
\[ f\circ\langle g_1,\dots,g_n\rangle\colon   (x_1,\dots,x_m)\mapsto f\big(g_1(x_1,\dots,x_m),\dots,g_n(x_1,\dots,x_m)\big).
\]
\begin{definition}
 A set $\clone{C}\subseteq \clone{O}_A$ is called \emph{clone} on $A$ if $\clone{J}_A\subseteq\clone{C}$, and if 
    $\clone{C}$ is closed with respect to composition.
\end{definition}
Clearly, both, $\clone{O}_A$ and $\clone{J}_A$ are clones. 
If $\clone{C}$ and $\clone{D}$ are clones on $A$, and if $\clone{C}\subseteq\clone{D}$, then we call $\clone{C}$ a \emph{subclone} of $\clone{D}$, and we denote this fact by $\clone{C}\le\clone{D}$. 
\begin{definition}
    Let $A$, $B$ be sets and let $\clone{C}\le \clone{O}_A$, $\clone{D}\le \clone{O}_B$. A function $h\colon  \clone{C}\to\clone{D}$ is called a \emph{clone-homomorphism} if
    \begin{enumerate}
    \item for all $n\in\bN\setminus\{0\}$  we have $h(\clone{C}^{(n)})\subseteq\clone{D}^{(n)}$,
    \item for all $n\in\bN\setminus\{0\}$ and for all $i\in\{1,\dots,n\}$ we have $h(e_i^n) = e_i^n$,
    \item for all $n,m\in\bN\setminus\{0\}$, for all $f\in\clone{C}^{(n)}$, and for all $g_1,\dots,g_n\in\clone{C}^{(m)}$ we have
    \[ h(f\circ\langle g_1,\dots,g_n\rangle) = h(f)\circ\langle h(g_1),\dots,h(g_n)\rangle.\]
    \end{enumerate} 
    A bijective clone-homomorphism will be called \emph{clone-isomorphism}.
\end{definition}

\subsection{The Tychonoff topology on clones}
Let $U$ be a set and let
$n\in\bN\setminus\{0\}$. For every finite subset $M$ of $U^n$ and
every $h\colon   M\to U$ define
$\Phi_{M,h}:=\{f\colon   U^n\to U\mid f\restr_M=h\}$.
Then all the sets of this shape form the basis of a topology on $\clone{O}_U^{(n)}$---the Tychonoff topology (aka the topology of pointwise convergence; here $U$ is considered to be equipped with the discrete topology). With this observation we may consider $\clone{O}_U$ as a topological sum
\[ \clone{O}_U = \bigsqcup_{n\in\bN\setminus\{0\}}\clone{O}_U^{(n)}.\]
Moreover, every clone $\clone{C}\le \clone{O}_U$ may be equipped with the subspace topology with respect to the topology on $\clone{O}_U$. This topology will be called the \emph{canonical topology} of $\clone{C}$. From now on, every clone will implicitly be considered to be equipped with its canonical topology.  
\begin{remark}
    Transformation monoids and permutation groups on $U$ are subsets of $\clone{O}_U^{(1)}$. Thus, they may be equipped with a subspace topology of $\clone{O}_U^{(1)}$. As for clones, in the sequel we will consider every transformation monoid and every permutation group on $U$ to be equipped with this topology, and we will call it the canonical topology of the respective transformation monoid or the permutation group.   
\end{remark}

If $U$ is countably infinite, then, since the space $\clone{O}_U^{(n)}$ is the countable power of a countable discrete space, the above given topology is completely metrizable by an ultrametric. In order to do so we consider $U^n$ as an $\omega$-indexed family $(\tu_i)_{i<\omega}$. Now we consider the function
\begin{gather*}
  D_U^{(n)}\colon  \clone{O}_U^{(n)}\times \clone{O}_U^{(n)}\to\omega^+ \qquad  (f,g)\mapsto 
  \begin{cases} \min\{i\in\omega\mid f(\tu_i)\neq g(\tu_i)\} & \text{if }f\neq g\\
    \omega & \text{if }f=g.
  \end{cases}
\end{gather*}
Now, for $f,g\in \clone{O}_U^{(n)}$, the distance in the mentioned ultrametric is given by
\[
d_U^{(n)}(f,g):= \begin{cases} 2^{-D_U^{(n)}(f,g)} & \text{if }f\neq g\\
  0 & \text{if }f=g.
\end{cases}
\]
Finally, the ultrametrics $d_U^{(n)}$ may be combined to one ultrametric $d_U$ on $\clone{O}_U$ according to
\begin{equation}\label{ultrametric}
d_U(f,g):= \begin{cases}
 1 & \text{if } f\in\clone{O}_U^{(n)}, g\in\clone{O}_U^{(m)}, n\neq m\\
d_U^{(n)}(f,g) & \text{if } f,g\in\clone{O}_U^{(n)}. 
 \end{cases}
\end{equation}
At this point it is important to note that the metric space $(\clone{O}_U,d_U)$ is complete no matter how the enumerations of the $\clone{O}_U^{(n)}$ for $n\in\bN\setminus\{0\}$ are chosen. In particular, if we choose other enumerations of the $\clone{O}_U^{(n)}$, and obtain an ultrametric, say, $d_U'$ on $\clone{O}_U$, then a sequence in $\clone{O}_U$ is going to be a Cauchy-sequence with respect to $d_U$ if and only if it is a Cauchy-sequence with respect to $d_U'$.  In the sequel, for any countable set $U$, we are going to consider $\clone{O}_U$ to be equipped with an ultrametric $d_U$, defined like in \eqref{ultrametric} through arbitrary enumerations of the  $\clone{O}_U^{(n)}$. Moreover, we will consider all subspaces of $\clone{O}_U^{(n)}$ to be equipped with the corresponding restriction of $d_U$, and we will (abusing notation) again denote the restriction by $d_U$.  
    
\subsection{Relational structures}
A \emph{relational signature} is a pair $\underline\Sigma=(\Sigma,\ar)$ where $\Sigma$ is a set of \emph{relational symbols} and $\ar\colon   \Sigma\to\bN\setminus\{0\}$ assigns to each  relational symbol its \emph{arity}. The set of all $n$-ary relational symbols in $\Sigma$ will be denoted by $\Sigma^{(n)}$.

A \emph{$\uSigma$-structure} $\bA$ is a pair $(A,(\varrho^\bA)_{\varrho\in\Sigma})$, such that $A$ is a set, and such that for each $\varrho\in\Sigma$ we have that $\varrho^\bA$ is a relation of arity $\ar(\varrho)$ on $A$.  The set $A$ will be called the \emph{carrier} of $\bA$ and the relations $\varrho^\bA$ will be called the \emph{basic relations} of $\bA$. If the signature $\uSigma$ is of no importance, we will speak only about relational structures. The carriers of a $\uSigma$-structures $\bA,\bB,\bC,\dots$ will usually be denoted by $A,B,C,\dots$, respectively. Moreover, the basic relations of $\bA,\bB,\bC,\dots$ will be denoted by $\varrho^\bA,\varrho^\bB,\varrho^\bC,\dots$, respectively, for each $\varrho\in\Sigma$.

Let $\bA$ and $\bB$ be $\uSigma$-structures. A function $h\colon   A\to B$ is called a \emph{homomorphism} from $\bA$ to $\bB$ if for all $n\in\bN\setminus\{0\}$, for all $\varrho\in\Sigma^{(n)}$ and for all $\ta=(a_1,\dots,a_n)\in \varrho^\bA$ we have that $h(\ta):=(h(a_1),\dots,h(a_n))\in\varrho^\bB$. A function $h\colon   A\to B$ is called \emph{embedding} if $h$ is injective and if for all $n\in\bN\setminus\{0\}$, for all $\varrho\in\Sigma^{(n)}$ and for all $\ta\in A^n$ we have $\ta\in\varrho^\bA \iff h(\ta)\in\varrho^\bB$.
Surjective embeddings are called \emph{isomorphisms}. As usual, isomorphisms of a relational structure $\bA$ onto itself are called \emph{automorphisms}, and homomorphisms of $\bA$ to itself are called \emph{endomorphisms}.  The automorphism group  and the  endomorphism monoid  of $\bA$ will be denoted by $\Aut(\bA)$ and  $\End(\bA)$, respectively. 

Whenever we write $h\colon   \bA\to\bB$, we mean that $h$ is a homomorphism from $\bA$ to $\bB$. Moreover, with $h\colon   \bA\injto\bB$ we denote the fact that $h$ is an embedding from $\bA$ into $\bB$, and we write just $\bA\injto\bB$ if there exists an embedding of $\bA$ into $\bB$.  

Let $\bA$ be a relational structure. For $n\in\bN\setminus\{0\}$, a homomorphism $h\colon  \bA^n\to \bA$ is called an \emph{$n$-ary polymorphism} of $\bA$. With $\Pol^{(n)}(\bA)$ we will denote the set of all $n$-ary polymorphisms of $\bA$. Moreover, we define
\[ \Pol(\bA):=\bigcup_{n\in\bN\setminus\{0\}}\Pol^{(n)}(\bA).\]
It is easy to see, that for every relational structure $\bA$ we have that $\Pol(\bA)$ is a closed subclone  of $\clone{O}_A$---the polymorphism clone of $\bA$. It is less obvious, that every closed subclone on $\clone{O}_A$ may be obtained as the polymorphism clone of a suitable relational structure on $A$ (cf.\ \cite[Lemma 3.1]{BakPix75},\cite[Theorem 1]{Rom77}, \cite[Theorem 4.1]{Poe80}). 

\subsection{Homogeneous structures}
The age of a $\uSigma$-structure $\bU$ is the class of finite $\uSigma$-structures embeddable into $\bU$. It will be denoted by $\Age(\bU)$. A structure $\bA$ is called \emph{younger} than $\bU$ if $\Age(\bA)\subseteq\Age(\bU)$.  According to a classical result by \Fraisse, a class $\class{C}$ of finite $\uSigma$-structures is the age of a countable $\uSigma$-structure if and only if  
\begin{enumerate}
    \item $\class{C}$ has the \emph{hereditary property} (\HP), i.e.
     $\forall \bA,\bB: (\bB\in\class{C}) \land (\bA\injto\bB)\Rightarrow (\bA\in\class{C})$,
    \item $\class{C}$ has the \emph{joint embedding property} (\JEP), i.e.
     $\forall\bA,\bB\in\class{C}\;\exists\bC\in\class{C} : (\bA\injto\bC)\land(\bB\injto\bC)$, 
    \item up to isomorphism, $\class{C}$ contains only countably many structures.
\end{enumerate}
 Thus it is natural to call a class $\class{C}$ of finite $\uSigma$-structures with these three properties an \emph{age}.
 If $\class{C}$ is an age, then by $\overline{\class{C}}$ we will denote the class of all countable structures whose age is contained in $\class{C}$.  

\begin{definition}
    A countable $\uSigma$-structure $\bA$ is called \emph{universal} if every structure from $\overline{\Age(\bA)}$ can be embedded into $\bA$. It is called \emph{homogeneous} if for every $\bB\in\Age(\bA)$ and for all embeddings $\iota_1,\iota_2\colon   \bB\injto\bA$ there exists an automorphism $h$ of $\bA$ such that $\iota_2=h\circ \iota_1$. 
\end{definition}

\begin{definition}
    Let $\class{C}$ be a class of $\uSigma$-structures. We say that $\class{C}$ has the \emph{amalgamation property} (\AP) if for all $\bA$, $\bB$, $\bC$  from $\class{C}$ and for all embeddings $f\colon   \bA\injto \bB$, $g\colon   \bA\injto \bC$, there exists $\bD\in\class{C}$  and embeddings $\hat{f}\colon   \bC\injto \bD$, $\hat{g}\colon   \bB\injto \bD$ such that     the following  diagram commutes:
  \[
  \begin{tikzcd}
    \bC \rar[hook,dashed]{\hat{f}}& \bD\\
    \bA  \uar[hook]{g}\rar[hook]{f}&  \bB\uar[hook,dashed]{\hat{g}}.
  \end{tikzcd}
  \]
\end{definition}
Let us recall the well-known characterization of ages of countable homogenous structures by \Fraisse:
\begin{theorem}[\Fraisse{} \cite{Fra53}]
    Let $\class{C}$ be an age. Then $\class{C}$ is the age of a countable homogeneous structure if and only if it has the \AP. Moreover, any two countable homogeneous structures with the same age are isomorphic. 
\end{theorem}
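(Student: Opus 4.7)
The plan is to prove both directions of the equivalence and then the uniqueness claim by a standard \Fraisse-style argument, since the theorem is the classical characterization.

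For the easy direction, assume $\bU$ is a countable homogeneous structure with $\Age(\bU)=\class{C}$. Given embeddings $f\colon\bA\injto\bB$ and $g\colon\bA\injto\bC$ in $\class{C}$, I would pick embeddings $\iota_\bB\colon\bB\injto\bU$ and $\iota_\bC\colon\bC\injto\bU$ (which exist because $\bB,\bC\in\Age(\bU)$). Then $\iota_\bB\circ f$ and $\iota_\bC\circ g$ are two embeddings of the finite structure $\bA$ into $\bU$, so homogeneity supplies an automorphism $\alpha\in\Aut(\bU)$ with $\alpha\circ\iota_\bB\circ f=\iota_\bC\circ g$. Taking $\bD$ to be the substructure of $\bU$ on the finite set $\alpha(\iota_\bB(B))\cup\iota_\bC(C)$, together with the corestrictions $\hat g:=\alpha\circ\iota_\bB\colon\bB\injto\bD$ and $\hat f:=\iota_\bC\colon\bC\injto\bD$, gives the required amalgam inside $\class{C}$.

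For the hard (existence) direction, I would carry out the standard \Fraisse construction. Fix a countable list of representatives $\bA_0,\bA_1,\ldots$ of the isomorphism types in $\class{C}$, and a bookkeeping enumeration of all ``amalgamation tasks'' that may arise. Then build a chain $\bU_0\injto\bU_1\injto\cdots$ in $\class{C}$ such that, at step $n$, either (a) some $\bA_i$ is embedded into $\bU_n$ (using \JEP{} to merge the current $\bU_{n-1}$ with $\bA_i$), or (b) a scheduled amalgamation task is resolved (using \AP{} applied to an embedding $\bA\injto\bU_{n-1}$ and an embedding $\bA\injto\bB$ in $\class{C}$, producing $\bU_n$ as an amalgam). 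Set $\bU:=\bigcup_{n<\omega}\bU_n$. By (a) the age of $\bU$ is exactly $\class{C}$, and by (b) one gets the \emph{extension property}: whenever $\bA\injto\bU$ and $\bA\injto\bB$ with $\bB\in\class{C}$, then $\bB$ embeds into $\bU$ over $\bA$. Homogeneity then follows by a standard back-and-forth argument: given two embeddings $\iota_1,\iota_2\colon\bB\injto\bU$ of a finite $\bB$, one enumerates $U$ and alternately extends a partial isomorphism using the extension property to cover the next element on each side, yielding an automorphism of $\bU$ carrying $\iota_1$ to $\iota_2$.

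For uniqueness, suppose $\bU$ and $\bU'$ are both countable homogeneous with age $\class{C}$. Both structures satisfy the extension property (this is the converse observation: homogeneity plus $\class{C}=\Age(\bU)$ implies the extension property, via \AP{} applied inside $\class{C}$ and then homogeneity to realize the amalgam inside $\bU$). A back-and-forth between enumerations of $U$ and $U'$, extending a partial finite isomorphism at each step using the extension property on each side alternately, produces an isomorphism $\bU\to\bU'$.

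The main obstacle is organizing the bookkeeping for the \Fraisse construction so that no amalgamation task is forgotten; this is a standard but fiddly diagonal enumeration. The back-and-forth step for homogeneity and uniqueness is then essentially the same argument executed twice, and contains no real difficulty beyond correctly combining \AP{} with the extension property.
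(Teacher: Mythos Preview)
The paper does not prove this theorem at all; it merely states it and cites \Fraisse's original paper \cite{Fra53} as a classical result needed for the later development. Your sketch is the standard textbook argument and is correct in outline, so there is nothing to compare against---you have supplied a proof where the paper simply invokes the literature.
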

An age is called a \emph{\Fraisse-class} if it has the \AP. A countable homogeneous $\uSigma$-structure $\bU$ is called a \emph{\Fraisse-limit} of its age $\Age(\bU)$. 
\begin{example}%\label{exFraisse}
    Some examples of \Fraisse-classes include the class of finite simple graphs, the class of finite posets (strictly or non-strictly ordered), the class of finite linear orders (strictly or non-strictly ordered), and the class of finite tournaments. 
    The corresponding \Fraisse-limits are the Rado graph (aka the countable random graph, aka the Erd\H{o}s-R\'enyi graph), the countable generic poset, the rationals, and the countable generic tournament, respectively.  
\end{example}
In the following, let $\uSigma$ be a relational signature and let $\cat{C}_\uSigma$ be the category of all $\uSigma$-structures with homomorphisms as morphisms. In $\cat{C}_\uSigma$, the amalgamated free sum is constructed as follows:
\begin{construction}
  Let $\bA$, $\bB_1$, $\bB_2$ be $\uSigma$-structures, such that $\bA\le\bB_1$,  $\bA\le\bB_2$, and such that $B_1\cap B_2=A$. Define $C:= B_1\cup B_2$, and for each $\varrho\in\Sigma$ define $\varrho^\bC:=\varrho^{\bB_1}\cup\varrho^{\bB_2}$, 
  and finally $\bC:=(C,(\varrho^\bC)_{\varrho\in\Sigma})$. Then $\bC$ is  called the \emph{amalgamated free sum} of $\bB_1$ and $\bB_2$ with respect to $\bA$. It is going to be denoted by $\bB_1\oplus_\bA\bB_2$. Note that the  following is always a pushout square in $\cat{C}_\uSigma$:
  \[ 
  \begin{tikzcd}
    \bB_1 \rar[hook]{=}& \bB_1\oplus_\bA\bB_2 \\
    \bA\pushout  \uar[hook]{=}\rar[hook]{=}&\bB_2\uar[hook]{=}.
  \end{tikzcd}
  \]
\end{construction}

\begin{definition}
  We say, that the age of a $\uSigma$ structure $\bU$ has the \emph{free amalgamation property} if $\Age(\bU)$ is closed with respect to amalgamated free sums in $\cat{C}_\uSigma$.
\end{definition}

\section{Automatic homeomorphicity}
\begin{definition}
    Let $\class{K}$ be a class of structures (possibly over different signatures), and let $\bU\in\class{K}$. We say that 
    \begin{itemize}
        \item $\Aut(\bU)$ has \emph{automatic homeomorphicity} with respect to $\class{K}$ if every group-isomorphism from $\Aut(\bU)$ to the automorphism group of a member of $\class{K}$ is a homeomorphism,
        \item $\overline{\Aut(\bU)}$ has \emph{automatic homeomorphicity} with respect to $\class{K}$ if every monoid-isomorphism from $\overline{\Aut(\bU)}$ to a closed submonoid of $\End(\bV)$  is a homeomorphism, for every $\bV\in\class{K}$,
        \item $\End(\bU)$ has \emph{automatic homeomorphicity} with respect to $\class{K}$ if every monoid-isomorphism from $\End(\bU)$ to the endomorphism monoid of a member of $\class{K}$ is a homeomorphism,
        \item $\Pol(\bU)$ has \emph{automatic homeomorphicity} with respect to $\class{K}$ if every clone-isomorphism from $\Pol(\bU)$ to the polymorphism clone of a member of $\class{K}$ is a homeomorphism. 
    \end{itemize}
    The phrase ``with respect to $\class{K}$'' will be dropped whenever $\class{K}$ consists of all structures on $U$. 
\end{definition}

The notion of automatic homeomorphicity for transformation semigroups and for clones was introduced by Bodirsky, Pinsker and Pongr\'acz in \cite{BodPinPon17}.  They proved automatic homeomorphicity for the following clones:
\begin{itemize}
    \item the Horn clone (i.e., the smallest closed clone containing all injective functions from $\clone{O}_\omega$),
    \item the closed subclones of $\clone{O}_\omega$ that contain $\clone{O}^{(1)}$,
    \item the polymorphism clone of the Rado graph,
    \item the clone of essentially injective polymorphisms of the Rado-graph,
    \item the $17$ minimal tractable clones over the Rado graph (cf. \cite{BodPin11}). 
\end{itemize}
Recently this list was expanded by Behrisch, Truss, and Vargas-Garc\'\i{}a in \cite{BehTruVar17}, \cite{TruVar16} to include the following clones:
\begin{itemize}
	\item the clone generated by $\End(\bU)$, where $\bU$ is a countable structure such that $\End(\bU)$ has automatic homeomorphicity,
	\item $\Pol(\bQ,\le)$,
	\item $\Pol(\bQ,\operatorname{betw})$, where $\operatorname{betw}(x,y,z)\equiv x\le y\le z \lor z\le y\le x$,
	\item $\Pol(\bQ,\operatorname{circ})$, where $\operatorname{circ}(x,y,z)\equiv x\le y\le z \lor y\le z\le x \lor z\le x\le y$,
	\item $\Pol(\bQ,\operatorname{sep})$, where $\operatorname{sep}(x,y,z,t)\equiv\operatorname{circ}(x,y,z)\lor \operatorname{circ}(x,t,y) \lor \operatorname{circ}(x,z,y) \lor \operatorname{circ}(x,y,t)$.
\end{itemize}

To show automatic homeomorphicity for the polymorphism clone of a countable homogeneous structure $\bU$ with respect to a class $\class{K}$ of structures, Bodirsky, Pinsker and Pongr\'acz in \cite{BodPinPon17} devised the following programme:
\begin{enumerate}
    \item\label{stepAut} show that $\Aut(\bU)$ has automatic homeomorphicity with respect to $\class{K}$,
    \item\label{stepEmb} show that $\overline{\Aut(\bU)}$ has automatic homeomorphicity with respect to $\class{K}$,
    \item\label{stepEnd} show that every isomorphism from $\End(\bU)$ to the the endomorphism monoid of a member of $\class{K}$ is continuous,
    \item\label{stepPol} show that every isomorphism from $\Pol(\bU)$ to the the polymorphism clone of a member of $\class{K}$ is continuous,
    \item\label{stepTB} show that every continuous isomorphism from $\Pol(\bU)$ to the polymorphism clone of a member of  $\class{K}$ is a homeomorphism.  
\end{enumerate}
Step \ref{stepAut} of this strategy is outsourced to group theory. To be more precise, there are two standard ways to show automatic homeomorphicity for groups---the small index property (recall that a  structure $\bU$ is said to have the \emph{small index property} if every subgroup of $\Aut(\bU)$ of index $<2^{\aleph_0}$ is open in $\Aut(\bU)$)  \cite{DixNeuTho86,Tru89,Hru92,HodHodLasShe93,Her98,Sol05,KecRos07}), and Rubin's (weak) $\forall\exists$-interpretations (cf.\ \cite{Rub94,BarPhD}). If  $\bU$ has the small index property, then $\Aut(\bU)$ has automatic homeomorphicity. On the other hand, if $\bU$ has a weak $\forall\exists$-interpretation, then $\Aut(\bU)$ has automatic homeomorphicity with respect to the class of countable $\omega$-categorical structures.

Step \ref{stepEmb} bases on the following observation:
\begin{proposition}[{\cite[Lemma 12]{BodPinPon17}}]\label{embcriterion}
	 If a closed transformation monoid $\monoid{M}$ on a countable set has a dense group $\group{G}$ of units, and if among the injective endomorphisms of $\monoid{M}$ only the identical endomorphism  fixes all elements of $\group{G}$ point-wise, then from the automatic homeomorphicity of $\group{G}$ with respect to $\class{K}$ follows the automatic homeomorphicity  of $\monoid{M}$ with respect to $\class{K}$.
\end{proposition}	
 It is shown in \cite[Theorem 21]{BodPinPon17} that this criterion applies to the monoid of self-embeddings of a countable homogeneous structure $\bU$ whenever $\Aut(\bU)$ has automatic homeomorphicity with respect to $\class{K}$, no algebraicity, and whenever $\bU$ has the \emph{joint extension property} (cf.\ \cite[Definition 18]{BodPinPon17}).

Step \ref{stepEnd}  relies on a so called \emph{gate technique}:
\begin{definition}[{\cite[Definition 3.1]{PecPec16}, implicit in \cite{BodPinPon17}}]\label{MonoidStrongGateCovering}
    Given a transformation monoid $\monoid{M}$ on a countably infinite set $A$. Let $\group{G}$ be the group of units in $\monoid{M}$, and let $\overline{\group{G}}$ be the closure of $\group{G}$ in $\monoid{M}$. Then we say that $\monoid{M}$  has a \emph{gate covering} if there exists an open covering $\cover{U}$ of $\monoid{M}$ and elements $f_U\in U$, for every $U\in\cover{U}$, such that for all $U\in\cover{U}$ and for all Cauchy-sequences $(g_n)_{n\in\bN}$ of elements from $U$  there exist Cauchy-sequences $(\kappa_n)_{n\in\bN}$ and $(\iota_n)_{n\in\bN}$ of elements from $\overline{\group{G}}$ such that for all $n\in\bN$ we have
            $g_n = \kappa_n\circ f_U\circ\iota_n$.
    
\end{definition}
Now Step \ref{stepEnd} can be fulfilled by observing that if $\overline{\Aut(\bU)}$ has automatic homeomorphicity with respect to $\class{K}$ and if $\End(\bU)$ has a gate covering, then every isomorphism from $\End(\bU)$ to the endomorphism monoid of a member of $\class{K}$ is continuous.  

Another gate-technique may be used to fulfill Step \ref{stepPol}:
\begin{definition}[{\cite[Definition 36]{BodPinPon17}}]\label{GateCovering}
    Let $\clone{C}$ be a clone.  Then $\clone{C}$  is said to have a \emph{gate covering} if there exists an open covering $\cover{U}$ of $\clone{C}$ and functions $f_U\in U$, for every $U\in\cover{U}$, such that for each $U\in\cover{U}$ and for all Cauchy-sequences $(g_n)_{n\in\bN}$ of functions from $U$ (all of the same arity $k$) there exist Cauchy-sequences $(\kappa_n)_{n\in\bN}$ and $(\iota^i_n)_{n\in\bN}$ ($i=1,\dots,k$) of functions from $\clone{C}^{(1)}$ such that 
    \[
        g_n(x_1,\dots,x_k) = \kappa_n(f_U(\iota^1_n(x_1),\dots,\iota^k_n(x_k))).
    \]
\end{definition}
In \cite[Theorem 38]{BodPinPon17} it is shown that whenever $\Pol(\bU)$ has a gate covering then every isomorphism from $\Pol(\bU)$ to the polymorphism clone of a member of $\class{K}$, whose restriction to $\End(\bU)$ is continuous, is itself continuous.

Finally, in Step~\ref{stepTB} a topological version of Birkhoff's theorem from \cite{BodPin15} is used to show that every continuous isomorphism from $\Pol(\bU)$ to the polymorphism clone of some structure from $\class{K}$ is open, too. 

The above sketched strategy was used in \cite{BodPinPon17} for showing automatic homeomorphicity of the polymorphism clone of the Rado graph.

Each of the 5 steps carries substantial difficulties. In the following we are going to short-circuit this process, by proving automatic homeomorphicity of the polymorphism clone of a structure $\bU$ without showing first the automatic homeomorphicity of $\overline{\Aut(\bU)}$ and/or $\End(\bU)$. 

In particular, we devise two new strategies for showing automatic homeomorphicity for the polymorphism clone of a countable homogeneous structure $\bU$ with respect to a class $\class{K}$ of structures:%\\
  \begin{enumeratewithhead}{First strategy}%\textbf{First strategy}\\[-3ex]
		\item Show that every isomorphism from the polymorphism clone of a member of $\class{K}$ to $\Pol(\bU)$ is continuous.
		\item Show that every continuous isomorphism from the polymorphism clone of a member of $\class{K}$ to $\Pol(\bU)$ is a homeomorphism.
	\end{enumeratewithhead}
	\begin{enumeratewithhead}{Second strategy}
		\item Show that $\Aut(\bU)$ has automatic homeomorphicity with respect to $\class{K}$.
		\item 	Show that $\overline{\Aut(\bU)}$ has automatic homeomorphicity with respect to $\class{K}$. 
	\item Show that every isomorphism from $\Pol(\bU)$ to the polymorphism clone of a member of $\class{K}$  is continuous. 
	\item Show that every continuous isomorphism from $\Pol(\bU)$ the the polymorphism clone of another member of $\class{K}$ is a homeomorphism.
	\end{enumeratewithhead}

Both our strategies  base on a gate-technique: The following definition is a slightly stronger  formulation of Definition~\ref{GateCovering} in the spirit of Definition~\ref{MonoidStrongGateCovering}:
\begin{definition}%\label{strongGateCovering}
    Let $\clone{C}$ be a clone, let $\group{G}$ be the group of units in $\clone{C}^{(1)}$, and let $\overline{\group{G}}$ be the closure of $\group{G}$ in $\clone{C}^{(1)}$. Then $\clone{C}$  is said to have a \emph{strong gate covering} if there exists an open covering $\cover{U}$ of $\clone{C}$ and functions $f_U\in U$, for every $U\in\cover{U}$, such that for each $U\in\cover{U}$ and for all Cauchy-sequences $(g_n)_{n\in\bN}$ of functions from $U$ (each of the same arity $k$) there exist Cauchy-sequences $(\kappa_n)_{n\in\bN}$ and $(\iota^i_n)_{n\in\bN}$ ($i=1,\dots,k$) of functions from $\overline{\group{G}}$ such that $g_n(x_1,\dots,x_k) = \kappa_n(f_U(\iota^1_n(x_1),\dots,\iota^k_n(x_k)))$.
\end{definition}
Strong gate coverings allow to lift continuity properties:
\begin{lemma}\label{liftcont}
	Let $\bA$ and $\bB$ be two countable relational structures, such that $\Pol(\bA)$ has a strong gate covering. Let $h\colon \Pol(\bA)\to\Pol(\bB)$ be a clone homomorphism whose restriction to $\overline{\Aut(\bA)}$ is continuous. Then $h$ is continuous, too. 
\end{lemma}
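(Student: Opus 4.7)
The plan is to reduce continuity of $h$ to sequential continuity and then use the strong gate covering to rewrite an arbitrary convergent sequence of polymorphisms in terms of elements of $\overline{\Aut(\bA)}$, on which continuity is already given. Since the canonical topology on $\Pol(\bA)$ is metrized by the complete ultrametric $d_A$, it suffices to show that whenever $g_n\to g$ in $\Pol(\bA)$, we have $h(g_n)\to h(g)$ in $\Pol(\bB)$.

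So I would fix such a convergent sequence $(g_n)_{n\in\bN}$ with limit $g\in\Pol(\bA)$. Because functions of differing arities lie at distance $1$, eventually all $g_n$ share a common arity $k$ which is also the arity of $g$, so discarding finitely many initial terms we may assume this is so from the start. Pick $U\in\cover{U}$ with $g\in U$ from the strong gate covering; by openness of $U$ and the convergence $g_n\to g$, almost all $g_n$ lie in $U$, and after a further truncation Definition~\ref{strongGateCovering} supplies Cauchy sequences $(\kappa_n)_{n\in\bN}$ and $(\iota^i_n)_{n\in\bN}$ in $\overline{\Aut(\bA)}$ with
\[
    g_n = \kappa_n\circ\langle f_U\circ\langle \iota^1_n,\dots,\iota^k_n\rangle\rangle\qquad(n\in\bN).
\]
By completeness of $\overline{\Aut(\bA)}\subseteq\clone{O}_A^{(1)}$ these sequences converge, say to $\kappa$ and $\iota^1,\dots,\iota^k$.

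To finish, I would invoke joint continuity of composition in the topology of pointwise convergence (which holds because convergence to a function on a discrete target means eventual coordinatewise constancy, so substituting and then applying the outer function commutes with taking the limit in each slot). This gives $g=\kappa\circ\langle f_U\circ\langle\iota^1,\dots,\iota^k\rangle\rangle$. Since $h$ is a clone homomorphism it commutes with composition, so
\[
    h(g_n)=h(\kappa_n)\circ\langle h(f_U)\circ\langle h(\iota^1_n),\dots,h(\iota^k_n)\rangle\rangle,
\]
and the analogous identity holds for $h(g)$. The assumed continuity of $h$ on $\overline{\Aut(\bA)}$ yields $h(\kappa_n)\to h(\kappa)$ and $h(\iota^i_n)\to h(\iota^i)$, whence another application of joint continuity of composition, this time in $\Pol(\bB)$, gives $h(g_n)\to h(g)$.

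I do not expect serious obstacles: the argument is a straightforward bookkeeping exercise that marries the gate decomposition against the hypothesis on $\overline{\Aut(\bA)}$. The only mildly delicate step is the use of joint continuity of composition needed to pass simultaneously to the limits in $\kappa_n$ and the $\iota^i_n$, but over a discrete base this is a standard property of topological clones.
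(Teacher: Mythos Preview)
Your argument is correct and follows essentially the same route as the paper's proof: reduce to sequential continuity via the metric, push a convergent sequence into a single member $U$ of the covering, decompose using the gate $f_U$ and Cauchy sequences in $\overline{\Aut(\bA)}$, apply $h$ termwise using that it is a clone homomorphism, and conclude by continuity of $h$ on $\overline{\Aut(\bA)}$ together with continuity of composition. The only cosmetic difference is that the paper begins with a Cauchy sequence and invokes completeness to obtain its limit, whereas you start directly from a convergent sequence; your notation $\kappa_n\circ\langle f_U\circ\langle\iota^1_n,\dots,\iota^k_n\rangle\rangle$ is slightly informal (the $\iota^i_n$ act on distinct coordinates, so strictly one should write $f_U\circ\langle \iota^1_n\circ e_1^k,\dots,\iota^k_n\circ e_k^k\rangle$), but the intended meaning is clear.
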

\begin{proof}
Let $(v_n)_{n\in\bN}$ be a Cauchy-sequence of $k$-ary polymorphisms of $\bA$. Since $(\Pol(\bA),d_A)$ is complete, $(v_n)_{n\in\bN}$ is convergent---say to $v\in\Pol^{(k)}(\bA)$. 
       
Let $(\cover{U},(f_U)_{U\in\cover{U}})$ be a strong gate covering of $\Pol(\bA)$. Then there exists a $U\in\cover{U}$ and an $n_0\in\bN$ such that for all $n\ge n_0$ we have $v_n\in U$. Without loss of generality, assume that $n_0=0$. By the definition of strong gate coverings there exist Cauchy-sequences $(\kappa_n)_{n\in\bN}$ and $(\iota^i_n)_{n\in\bN}$ ($i=1,\dots,k$) in $\overline{\Aut(\bA)}$, such that 
$v_n(x_1,\dots,x_k) = \kappa_n(f_U((\iota^1_n(x_1),\dots,\iota^k_n(x_k)))$,
for all $n\in\bN$. In particular, with $\kappa=\lim_{n\to\infty}\kappa_n$ and $\iota^i=\lim_{n\to\infty} \iota^i_n$, we have 
$v(x_1,\dots,x_k) = \kappa(f_U((\iota^1(x_1),\dots,\iota^k(x_k)))$.
Because $h\restr_{\overline{\Aut(\bA)}}$ is continuous,  we have $\lim_{n\to\infty} h(\kappa_n)=h(\kappa)$ and $\lim_{n\to\infty}h(\iota^i_n) = h(\iota^i)$, for all $i=1\dots k$.  

Since $h$ is a clone-isomorphism, we have 
$h(v_n)(x_1,\dots,x_k) = h(\kappa_n)(h(f_U)(h(\iota^1_n)(x_1),\dots,h(\iota^k_n)(x_k)))$.
Thus, since the composition of functions is continuous, we have that the sequence $(h(v_n))_{n\in\bN}$ converges to $h(v)$. From this, it follows that $h$ is continuous. 
\end{proof}

\subsection{About the first strategy}
\begin{proposition}\label{autocontcriterion}
    Let $\bA$ and $\bB$ be two countable relational structures, such that $\Pol(\bB)$ has a strong gate covering.
    Let $h\colon   \Pol(\bA)\to\Pol(\bB)$ be a continuous clone-isomorphism. Then $h$ is a homeomorphism.
\end{proposition}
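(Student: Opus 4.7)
The plan is to apply Lemma~\ref{liftcont} to the clone-homomorphism $h^{-1}\colon \Pol(\bB)\to\Pol(\bA)$. Since $\Pol(\bB)$ has a strong gate covering by hypothesis, this reduces everything to showing that the restriction $h^{-1}|_{\overline{\Aut(\bB)}}$ is continuous.

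First I would observe that, since $h$ is a clone-isomorphism, it sends invertible unary elements to invertible unary elements, so it restricts to a continuous bijective group homomorphism $h_0\colon \Aut(\bA)\to\Aut(\bB)$ between Polish groups (both are closed subgroups of the symmetric groups on their countable base sets). The open mapping theorem for Polish groups then forces $h_0$ to be a homeomorphism, so $h^{-1}|_{\Aut(\bB)}=h_0^{-1}$ is continuous. Next I would upgrade this to continuity on $\overline{\Aut(\bB)}$: every continuous homomorphism of topological groups is uniformly continuous with respect to the left uniformities, and the left uniformity on $\Aut(\bB)$ coincides with the subspace uniformity inherited from the complete ultrametric space $(\End(\bB),d_B)$; the completion of $\Aut(\bB)$ in that uniformity is precisely its pointwise closure $\overline{\Aut(\bB)}$, and similarly for $\bA$. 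Hence $h_0^{-1}$ extends uniquely to a continuous map $\tilde{\phi}\colon\overline{\Aut(\bB)}\to\overline{\Aut(\bA)}$. To identify $\tilde{\phi}$ with $h^{-1}|_{\overline{\Aut(\bB)}}$, I would pick $g\in\overline{\Aut(\bB)}$ together with $\alpha_n\in\Aut(\bB)$ satisfying $\alpha_n\to g$: by construction $\tilde{\phi}(g)=\lim_n h^{-1}(\alpha_n)$, and continuity of $h$ gives $h(\tilde{\phi}(g))=\lim_n\alpha_n=g$, forcing $\tilde{\phi}(g)=h^{-1}(g)$ because $h$ is bijective. Lemma~\ref{liftcont} applied to $h^{-1}$ then produces continuity of $h^{-1}$ on all of $\Pol(\bB)$, so $h$ is a homeomorphism.

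The main delicate point is the extension step from $\Aut(\bB)$ to $\overline{\Aut(\bB)}$: continuity of $h^{-1}$ on the Polish group $\Aut(\bB)$, supplied by the open mapping theorem, does not automatically pass to the closure, since the two-sided group metric making $\Aut(\bB)$ Polish is strictly stronger than the subspace metric it inherits from $\End(\bB)$---pointwise Cauchy sequences of automorphisms may have non-invertible self-embeddings as their limits. The rescue is the general fact that continuous group homomorphisms are uniformly continuous already for the weaker one-sided left uniformity, and that the completion of $\Aut(\bB)$ in that uniformity is exactly $\overline{\Aut(\bB)}$.
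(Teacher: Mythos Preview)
Your proof is correct and follows essentially the same architecture as the paper's: reduce via Lemma~\ref{liftcont} to continuity of $h^{-1}$ on $\overline{\Aut(\bB)}$, obtain a homeomorphism at the level of automorphism groups, and then pass to the closure by a uniform-continuity/completion argument. The only differences are in the tools you cite---the open mapping theorem for Polish groups in place of Lascar's Proposition~\ref{lascarhom}, and general left-uniformity theory for topological groups in place of the paper's elementary Lemma~\ref{univcontmonoid}---and in that you extend $h_0^{-1}$ directly rather than extending $h_0$, identifying it with $h\restr_{\overline{\Aut(\bA)}}$ via uniqueness, and then inverting.
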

Before coming to the proof of this proposition, let us make some auxiliary  observations:
\begin{lemma}\label{univcontmonoid}
    Let $A$, $B$ be countable sets, and let $\monoid{M}_1\le \clone{O}_A^{(1)}$, $\monoid{M}_2\le \clone{O}_B^{(1)}$ be monoids, such that $\monoid{M}_1$ has a dense set of units. Let $h\colon   \monoid{M}_1\to \monoid{M}_2$ be a continuous homomorphism. Then $h$ is uniformly continuous from $(\monoid{M}_1, d_A)$ to $(\monoid{M}_2,d_B)$.
\end{lemma}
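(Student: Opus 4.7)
The plan is to argue that the modulus of continuity of $h$ at the neutral element $\mathrm{id}_A$ already works at every point of $\monoid{M}_1$, via a transport argument using invertibility of units and density of $\group{G}$ in $\monoid{M}_1$, where $\group{G}$ denotes the group of units.

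First I would fix enumerations $A=\{u_1,u_2,\dots\}$ and $B=\{v_1,v_2,\dots\}$ inducing the ultrametrics $d_A$ and $d_B$. By continuity of $h$ at $\mathrm{id}_A$, together with $h(\mathrm{id}_A)=\mathrm{id}_B$, for every $n\in\bN$ there exists $m=m(n)$ such that every $\phi\in\monoid{M}_1$ with $\phi(u_i)=u_i$ for $i\le m$ satisfies $h(\phi)(v_j)=v_j$ for $j\le n$. Next I would show that the \emph{same} modulus $m(n)$ controls continuity at every unit $\alpha\in\group{G}$. Indeed, if $f\in\monoid{M}_1$ agrees with $\alpha$ on $\{u_1,\dots,u_{m(n)}\}$, set $\phi:=\alpha^{-1}\circ f\in\monoid{M}_1$, which then fixes these points, so $h(\phi)$ fixes $v_1,\dots,v_n$. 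Because $h$ is a monoid homomorphism, $h(\alpha^{-1})$ is a two-sided inverse of $h(\alpha)$, so $h(\alpha)$ is injective. From $h(f)=h(\alpha)\circ h(\phi)$ one then reads off $h(f)(v_j)=h(\alpha)(v_j)$ for $j\le n$.

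Finally, given arbitrary $f,g\in\monoid{M}_1$ with $d_A(f,g)\le 2^{-m(n)}$, i.e.\ $f$ and $g$ agree on $\{u_1,\dots,u_{m(n)}\}$, the density of $\group{G}$ in $\monoid{M}_1$ provides a unit $\alpha\in\group{G}$ that also agrees with $f$ (and hence with $g$) on this finite set. Applying the previous observation to $f$ and to $g$ shows that $h(f)$ and $h(g)$ both agree with $h(\alpha)$ on $\{v_1,\dots,v_n\}$, and therefore with one another, i.e.\ $d_B(h(f),h(g))\le 2^{-n}$. This is uniform continuity.

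I do not foresee a real obstacle; the only subtle point is that $h(\alpha)$ need not be a unit of $\monoid{M}_2$, but its two-sided inverse $h(\alpha^{-1})$ lives inside $\monoid{M}_2$, and that is all one needs for the cancellation. The argument is really a manifestation of the familiar fact that on a topological group a continuous homomorphism is automatically uniformly continuous, bootstrapped from the dense subgroup of units to the ambient monoid by the ultrametric agreement-on-a-finite-prefix description of the topology.
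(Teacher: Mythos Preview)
Your proposal is correct and follows essentially the same argument as the paper: both use continuity at the identity to obtain a modulus, then for arbitrary $f,g$ close to each other find a nearby unit $\alpha$ by density, reduce via $\alpha^{-1}\circ f$ and $\alpha^{-1}\circ g$ to the identity case, and conclude that $h(f)$ and $h(g)$ agree with $h(\alpha)$ (hence with each other) on a prescribed initial segment. The only cosmetic difference is that the paper phrases the final step via the ultrametric inequality $d_B(h(m),h(g))\le\varepsilon$ and $d_B(h(m'),h(g))\le\varepsilon$, while you argue pointwise agreement directly; your aside about injectivity of $h(\alpha)$ is in fact not needed, since $h(f)(v_j)=h(\alpha)(h(\phi)(v_j))=h(\alpha)(v_j)$ follows immediately from $h(\phi)(v_j)=v_j$.
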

\begin{proof}
    Suppose that the metrics $d_A$ and $d_B$ are induced by enumerations $\ta$ and $\tb$ of $A$ and $B$, respectively. Let $e_1$, $e_2$ be the neutral elements of $\monoid{M}_1$ and of $\monoid{M}_2$, respectively. Let $\varepsilon>0$.   Since $h$ is continuous at $e_1$, there exists a $\Delta\in\bN\setminus\{0\}$  such that, with   $\delta:=2^{-\Delta}$, for all $m\in \monoid{M}_1$ with $d_\ta(m,e_1)\le \delta$ we have $d_\tb(h(m),e_2)\le\varepsilon$. 
    
    Let $m,m'\in \monoid{M}_1$ with  $d_A(m,m')\le \delta$. Then we have
%    \[
    $(m(a_0),\dots,m(a_{\Delta-1}))=(m'(a_0),\dots,m'(a_{\Delta-1}))=:\tc$.
 %   \]
    But since the units lie dense in $\monoid{M}_1$, there exists a unit $g\in \monoid{M}_1$ with 
      $(g(a_0),\dots,g(a_{\Delta-1}))=\tc$.
    Consider now $\widetilde{m}:= g^{-1}\circ m$ and $\widetilde{m}':=g^{-1}\circ m'$. Then $d_A(\widetilde{m},e_1)\le\delta$ and $d_A(\widetilde{m}',e_1)\le \delta$. 
    
    Now we compute
    \[
        \varepsilon\ge d_B(h(\widetilde{m}),e_2)= d_B(h(g^{-1}\circ m),e_2)=d_B(h(g)^{-1}\circ h(m),e_2)= d_B(h(m),h(g)) 
    \]
    In the same way we obtain $d_B(h(m'),h(g))\le\varepsilon$. Since $d_B$ is an ultrametric, we finally conclude that $d_B(h(m),h(m'))\le\varepsilon$. 
\end{proof}

We will further need the following basic facts about metric spaces and uniform continuous functions:
\begin{lemma}[Hausdorff {\cite[Page 368]{Hau14}}]\label{uniqueext}
    Let $(M_1,d_1)$ be a metric space and let $(M_2,d_2)$ be a complete metric space. Then every uniformly continuous function $f\colon  (M_1,d_1)\to(M_2,d_2)$ has a unique uniformly continuous extension to the completion of $(M_1,d_1)$.
\end{lemma}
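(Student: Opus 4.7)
The plan is to construct the extension $\bar{f}$ via Cauchy sequences, which is the only natural candidate once one knows uniform continuity transports Cauchy sequences to Cauchy sequences. I identify $(M_1,d_1)$ with its isometric image inside its completion $(\widetilde{M}_1,\tilde{d}_1)$, so that every $\tilde{x}\in\widetilde{M}_1$ arises as a limit of some Cauchy sequence $(x_n)_{n\in\bN}$ in $M_1$. The first step is to observe that $(f(x_n))_{n\in\bN}$ is Cauchy in $M_2$: given $\varepsilon>0$, uniform continuity supplies $\delta>0$ with $d_1(x,y)<\delta\Rightarrow d_2(f(x),f(y))<\varepsilon$, and for all sufficiently large $n,m$ we have $d_1(x_n,x_m)<\delta$. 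Completeness of $(M_2,d_2)$ then yields a well-defined limit, and I set $\bar{f}(\tilde{x}):=\lim_{n\to\infty}f(x_n)$.

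The second step is to check that $\bar{f}(\tilde{x})$ does not depend on the chosen Cauchy sequence representing $\tilde{x}$. If $(x_n)$ and $(x'_n)$ both converge to $\tilde{x}$, then the interleaved sequence $x_1,x'_1,x_2,x'_2,\ldots$ also converges to $\tilde{x}$, hence the previous argument applies to it, producing a Cauchy sequence in $M_2$ whose odd- and even-indexed subsequences must share a common limit. Taking $(x_n)$ to be eventually constant at $\tilde{x}\in M_1$ shows at the same time that $\bar{f}$ really extends $f$.

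The third step is the uniform continuity of $\bar{f}$. Given $\varepsilon>0$, pick $\delta>0$ witnessing uniform continuity of $f$ at level $\varepsilon/2$. For $\tilde{x},\tilde{y}\in\widetilde{M}_1$ with $\tilde{d}_1(\tilde{x},\tilde{y})<\delta/2$, I choose representatives $(x_n)\to\tilde{x}$ and $(y_n)\to\tilde{y}$; for all sufficiently large $n$ we have $d_1(x_n,y_n)<\delta$, hence $d_2(f(x_n),f(y_n))<\varepsilon/2$, and passing to the limit gives $\tilde{d}_2(\bar{f}(\tilde{x}),\bar{f}(\tilde{y}))\le\varepsilon/2<\varepsilon$. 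Uniqueness is immediate: any two uniformly continuous (even merely continuous) extensions of $f$ coincide on the dense subspace $M_1$, and therefore coincide on all of $\widetilde{M}_1$.

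None of the steps presents a real obstacle; the only mildly subtle point is the well-definedness of $\bar{f}$, where one must resist the temptation to argue via ``$\lim d_2(f(x_n),f(x'_n))=0$'' before knowing both limits exist, which is why I prefer the interleaving trick. Everything else is a routine transfer of the $(\varepsilon,\delta)$ definition across the limit operation enabled by completeness of $(M_2,d_2)$.
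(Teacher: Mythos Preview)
Your proof is correct and follows the standard argument for this classical result. The paper itself gives no proof beyond the sentence ``This is folklore,'' so there is nothing to compare; your write-up actually supplies what the paper omits.
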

\begin{corollary}
    Let $\Met$ be the category of metric spaces with uniformly continuous functions. Let $\cMet$ be the full subcategory of $\Met$ spanned by all complete metric spaces. Then the assignment that maps every metric space $\bM$ to its completion $\widehat{\bM}$ and that maps every uniform continuous function $f\colon  \bM_1\to\bM_2$ to its unique extension $\hat{f}\colon  \widehat{\bM}_1\to\widehat{\bM}_2$ is a functor from $\Met$ to $\cMet$.  
\end{corollary}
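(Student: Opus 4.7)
The plan is to verify the two functoriality axioms---preservation of identities and preservation of composition---by repeated application of the uniqueness clause of Lemma~\ref{uniqueext}. First I would clarify well-definedness on morphisms: for a uniformly continuous $f\colon\bM_1\to\bM_2$, post-composition with the canonical isometric embedding $\iota_2\colon\bM_2\injto\widehat{\bM}_2$ yields a uniformly continuous map $\iota_2\circ f\colon\bM_1\to\widehat{\bM}_2$ with complete codomain, so Lemma~\ref{uniqueext} delivers a unique uniformly continuous extension $\hat f\colon\widehat{\bM}_1\to\widehat{\bM}_2$. This is the value of the assignment on the morphism $f$.

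For the preservation of identities, I would note that $\mathrm{id}_{\widehat{\bM}}\colon\widehat{\bM}\to\widehat{\bM}$ is uniformly continuous and restricts on $\iota(\bM)\subseteq\widehat{\bM}$ to the embedding $\iota$, i.e.\ to the composite $\iota\circ\mathrm{id}_\bM$ that needs to be extended. By the uniqueness part of Lemma~\ref{uniqueext}, this forces $\widehat{\mathrm{id}_\bM}=\mathrm{id}_{\widehat{\bM}}$.

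For the preservation of composition, given $f\colon\bM_1\to\bM_2$ and $g\colon\bM_2\to\bM_3$ in $\Met$, I would consider the composite $\hat g\circ\hat f\colon\widehat{\bM}_1\to\widehat{\bM}_3$. This map is uniformly continuous because composition of uniformly continuous maps between metric spaces is uniformly continuous. Restricted to $\bM_1\subseteq\widehat{\bM}_1$ it evaluates at $x$ to $\hat g(\hat f(x))=\hat g(\iota_2(f(x)))=\iota_3(g(f(x)))$, so it agrees with $\iota_3\circ(g\circ f)$ on $\bM_1$. Uniqueness of the extension from Lemma~\ref{uniqueext} then yields $\widehat{g\circ f}=\hat g\circ\hat f$.

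There is no substantial obstacle here; the corollary is a formal consequence of Lemma~\ref{uniqueext} once one is careful with the two routine observations that (i) a uniformly continuous function into $\bM_2$ is also a uniformly continuous function into $\widehat{\bM}_2$ via the canonical isometric embedding, and (ii) composition of uniformly continuous maps between metric spaces is again uniformly continuous. Both functoriality axioms then reduce to the assertion that two uniformly continuous maps from $\widehat{\bM}_1$ to $\widehat{\bM}_2$ (resp. $\widehat{\bM}_3$) agreeing on the dense subset $\bM_1$ must coincide.
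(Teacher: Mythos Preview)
Your proof is correct and complete; it is precisely the standard verification of functoriality via the uniqueness clause of Lemma~\ref{uniqueext}. The paper itself gives no argument beyond ``This is folklore,'' so you have simply supplied the details the authors omitted.
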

\begin{proof}
This is folklore.
\end{proof}
\begin{remark} 
  In fact, $\cMet$ is a reflective subcategory of $\Met$, and the completion functor is the corresponding reflector. This is one of the earliest examples of reflective subcategories. In Freyd's PhD-thesis  (this is the place where Freyd introduced notion of reflective subcategories)  it is shown that the class of complete metric spaces induces a reflective subcategory in the category of metric spaces with non-expansive mappings (cf.\ \cite[Page 25]{Fre60}). The same proof functions for the situation with uniformly continuous functions (cf.\ \cite[Page 79]{Fre64}).  
\end{remark}
We are going to denote the completion functor by $C$. 
Finally we are going to make use of the following observation by Lascar:
\begin{proposition}[{\cite[Corollary 2.8]{Las91}}]\label{lascarhom}
Let $\bA$ and $\bB$ be countable relational structures and let $f$ be a continuous isomorphism from $\Aut(\bA)$ to $\Aut(\bB)$. Then $f$ is a homeomorphism. 
\end{proposition}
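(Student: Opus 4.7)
The proposition asserts that continuity alone upgrades a group isomorphism $f\colon \Aut(\bA)\to\Aut(\bB)$ between automorphism groups of countable relational structures to a homeomorphism. My plan is to derive this as an instance of the open mapping theorem for Polish groups, which is the cleanest path once the right topological framework is set up.

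The first step is to verify that $\Aut(\bA)$ and $\Aut(\bB)$ are Polish groups in their canonical topologies. For any countable relational structure $\bC$, the automorphism group $\Aut(\bC)$ sits as a closed subgroup of the symmetric group $\operatorname{Sym}(C)$ on the countable set $C$, and the latter is a well-known Polish group under pointwise convergence (a closed subspace of the Baire-like space $C^C\times C^C$ via $g\mapsto(g,g^{-1})$). Hence both $\Aut(\bA)$ and $\Aut(\bB)$ are completely metrizable and separable, and $f$ is a continuous surjective group homomorphism between Polish groups. The open mapping theorem for Polish groups then asserts that any such map is open, which gives that $f^{-1}$ is continuous and hence $f$ is a homeomorphism.

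The nontrivial ingredient is the open mapping theorem itself, and this is where I expect the real work to lie. The standard proof goes through a Baire category argument together with Pettis' lemma. Concretely, given any neighborhood $U$ of the identity in $\Aut(\bA)$, surjectivity of $f$ together with separability implies that countably many left-translates of $f(U)$ cover the Polish group $\Aut(\bB)$; by the Baire category theorem, $f(U)$ cannot be meager. Pettis' lemma then forces $f(U)\cdot f(U)^{-1}=f(UU^{-1})$ to contain a neighborhood of the identity in $\Aut(\bB)$, and translation invariance propagates this to openness of $f$ at every point. For the written proof I would invoke this as a black-box result from descriptive set theory, citing a standard reference (e.g.\ Kechris' \emph{Classical Descriptive Set Theory}), so that the work reduces to the verification in the previous paragraph that we are in the Polish setting.
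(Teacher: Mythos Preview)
The paper does not give its own proof of this proposition; it simply cites it as \cite[Corollary~2.8]{Las91} and uses it as a black box in the proof of Proposition~\ref{autocontcriterion}. Your proposal therefore supplies an argument where the paper offers none, and the argument you give is correct.

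Your route via the open mapping theorem for Polish groups is the standard modern way to see this fact. The only point that deserves a moment's care is the application of Pettis' lemma: one needs $f(U)$ to have the Baire property, not merely to be non-meager. This holds because $f(U)$ is the continuous image of an open (hence Polish) subset of $\Aut(\bA)$ and is therefore analytic, and analytic sets have the Baire property. With that detail noted, the chain ``non-meager with BP $\Rightarrow$ $f(U)f(U)^{-1}$ is a neighbourhood of the identity'' goes through, and openness of $f$ follows.

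Lascar's original paper is framed around the small index property rather than the general Polish-group machinery, so his derivation of the corollary is phrased somewhat differently; but the underlying mechanism---Baire category in a closed subgroup of $\operatorname{Sym}(\omega)$---is the same, and your presentation is, if anything, the cleaner and more citable one today.
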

Eventually we can come to the proof of Proposition~\ref{autocontcriterion}:
\begin{proof}[Proof of Proposition~\ref{autocontcriterion}]
Let  $f:= h\restr_{\Aut(\bA)}$. Since $h$ is continuous, we have that $f$ is continuous, too. Thus, by Proposition~\ref{lascarhom}, $f$ is a homeomorphism. By Lemma~\ref{univcontmonoid}, $f\colon  (\Aut(\bA),d_A)\to(\Aut(\bB),d_B)$ and $f^{-1}\colon  (\Aut(\bB),d_B)\to(\Aut(\bA),d_A)$ are uniformly continuous. That is, $f$ is an isomorphism in the category $\Met$. Let $\hat{f}:=C(f)$ be the unique uniformly continuous extension of $f$ to $\overline{\Aut(\bA)}$. Then, since $C$ is a functor, we have that  $\hat{f}\colon  \overline{\Aut(\bA)}\to\overline{\Aut(\bB)}$ is an isomorphism in the category $\cMet$, and in particular we have that $C(f^{-1})=C(f)^{-1}=\hat{f}^{-1}$ holds.

Let now $g:=h\restr_{\overline{\Aut(\bA)}}$. Since $h$ is continuous, it follows that $g\colon  \overline{\Aut(\bA)}\to\overline{\Aut(\bB)}$ is continuous, too. Thus, from Lemma~\ref{univcontmonoid} we conclude that $g\colon  (\overline{\Aut(\bA)},d_A)\to (\overline{\Aut(\bB)},d_B)$ is uniformly continuous. Because, clearly, we have $g\restr_{\Aut(\bA)}=f$, we conclude from Lemma~\ref{uniqueext}, that $g=C(f)=\hat{f}$. Thus $g\colon  \overline{\Aut(\bA)}\to\overline{\Aut(\bB)}$ is a homeomorphism. 

Now, since $h^{-1}$ is a clone-homomorphism, and since $(h^{-1})\restr_{\overline{\Aut(\bB)}} = g^{-1}$, and since $g^{-1}$ is continuous, it follows from Lemma~\ref{liftcont} that $h^{-1}$ is continuous, too.
\end{proof}

\begin{corollary}
  Let $\class{K}$ be a class of structures and let $\bU\in\class{K}$, such that $\Pol(\bU)$  has a strong gate covering. Then $\Pol(\bU)$ has automatic homeomorphicity with respect to $\class{K}$ if and only if every isomorphism from $\Pol(\bU)$ to the polymorphism clone of a member of $\class{K}$ is open.
\end{corollary}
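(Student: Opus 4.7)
The statement is a biconditional, and the forward direction is immediate: a homeomorphism is by definition both continuous and open, so if every isomorphism from $\Pol(\bU)$ to the polymorphism clone of a member of $\class{K}$ is a homeomorphism, then in particular every such isomorphism is open. All the content sits in the converse direction.

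For the converse, I would fix $\bV\in\class{K}$ and an arbitrary abstract clone-isomorphism $h\colon\Pol(\bU)\to\Pol(\bV)$, and aim to show that $h$ is a homeomorphism. The hypothesis that every isomorphism from $\Pol(\bU)$ to the polymorphism clone of a member of $\class{K}$ is open applies in particular to $h$ itself. Now the key idea is to pass to the inverse: $h^{-1}\colon\Pol(\bV)\to\Pol(\bU)$ is again a clone-isomorphism, and the statement ``$h$ is open'' is equivalent to the statement ``$h^{-1}$ is continuous.'' Thus, from the assumption, we obtain a continuous clone-isomorphism $h^{-1}\colon\Pol(\bV)\to\Pol(\bU)$ whose codomain $\Pol(\bU)$ carries a strong gate covering by hypothesis.

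At this point, I would apply Proposition~\ref{autocontcriterion} with the roles $\bA:=\bV$ and $\bB:=\bU$ (the proposition only requires that the \emph{target} clone have a strong gate covering). This yields that $h^{-1}$ is a homeomorphism, and hence so is $h$, completing the proof of the converse and thus of the corollary.

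The only subtlety to flag is that Proposition~\ref{autocontcriterion} is asymmetric in its hypothesis on the strong gate covering---it requires the covering on the target of the continuous isomorphism. Swapping source and target via $h\mapsto h^{-1}$ is precisely what allows the corollary's hypothesis on $\Pol(\bU)$ (rather than on $\Pol(\bV)$) to be used. I do not expect any genuine obstacle: once the inversion trick is in place, the rest is a direct invocation of Proposition~\ref{autocontcriterion}.
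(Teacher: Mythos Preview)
Your proof is correct and follows essentially the same approach as the paper's: invert $h$ to obtain a continuous clone-isomorphism $h^{-1}\colon\Pol(\bV)\to\Pol(\bU)$, then apply Proposition~\ref{autocontcriterion} (with the strong gate covering on the target $\Pol(\bU)$) to conclude that $h^{-1}$, and hence $h$, is a homeomorphism. The observation you flag about the asymmetry of Proposition~\ref{autocontcriterion} is exactly the point, and the paper handles it the same way.
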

\begin{proof}
  Suppose that every isomorphism from $\Pol(\bU)$ to the polymorphism clone of a member of $\class{K}$ is open. Let $\bV\in\class{K}$, and let $h\colon  \Pol(\bU)\to\Pol(\bV)$ be an isomorphism. Then $h$ is open. Hence $h^{-1}\colon  \Pol(\bV)\to\Pol(\bU)$ is a  continuous clone isomorphism. Since $\Pol(\bU)$ has a strong gate covering, it follows from Proposition~\ref{autocontcriterion}, that $h^{-1}$ is a homeomorphism. Thus, $h$ is a homeomorphism, too.
    
    The proof of the other direction of the claim is trivial.
\end{proof}
In order to fulfill our first strategy, we may use the following results from \cite{BodPinPon17}:
\begin{proposition}[{\cite[Proposition 27]{BodPinPon17}}]\label{bbpopen}
    Let $\bU$ be a relational structure such that $\Pol(\bU)$ contains all constant functions. Then every isomorphism from $\Pol(\bU)$ to another clone of functions is open. 
\end{proposition}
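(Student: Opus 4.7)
The heart of the matter is that when $\Pol(\bU)$ contains every constant, the topology of pointwise convergence can be captured purely in terms of clone-theoretic identities involving constants. I would first argue that the property of being a constant function is first-order definable in the pure clone language: a unary $c$ is constant if and only if $c\circ\langle e^2_1\rangle = c\circ\langle e^2_2\rangle$, because the two sides are, respectively, the binary functions $(x,y)\mapsto c(x)$ and $(x,y)\mapsto c(y)$. Since any clone-homomorphism fixes the projections $e_1^2,e_2^2$, it must send constants to constants.

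Let $h\colon\Pol(\bU)\to\clone{D}$ be a clone-isomorphism, where $\clone{D}$ is a clone on some set $V$. By the previous observation, $h$ restricts to a bijection between $\{c_a\mid a\in U\}$ and the set of constant functions in $\clone{D}^{(1)}$. Since constant functions in $\clone{D}^{(1)}$ are in bijection with the subset $V_0:=\{v\in V\mid c_v\in\clone{D}\}$ of $V$ via $c_v\mapsto v$, we obtain an induced bijection $\sigma\colon U\to V_0$ defined by $h(c_a)=c_{\sigma(a)}$.

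The next step is to rewrite basic open sets through $\sigma$. A basic open subset of $\Pol(\bU)^{(n)}$ has the form $\Phi_{M,g}\cap\Pol(\bU)^{(n)}$, i.e.\ the $n$-ary polymorphisms $f$ with $f(\ta)=g(\ta)$ for each $\ta\in M$ (with $M\subseteq U^n$ finite and $g\colon M\to U$). Each such equation $f(\ta)=g(\ta)$ with $\ta=(a_1,\dots,a_n)$ is equivalent to the clone-identity
\[
   f\circ\langle c_{a_1},\dots,c_{a_n}\rangle \;=\; c_{g(\ta)},
\]
because both sides are constant functions and their common value is $f(\ta)$ on the left and $g(\ta)$ on the right. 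Applying $h$ and using that $h(c_a)=c_{\sigma(a)}$, this identity becomes
\[
   h(f)\circ\langle c_{\sigma(a_1)},\dots,c_{\sigma(a_n)}\rangle \;=\; c_{\sigma(g(\ta))},
\]
which holds in $\clone{D}$ if and only if $h(f)(\sigma(a_1),\dots,\sigma(a_n))=\sigma(g(\ta))$. Consequently, since $h$ is bijective, the image $h(\Phi_{M,g}\cap\Pol(\bU)^{(n)})$ is exactly
\[
   \{f'\in\clone{D}^{(n)}\mid f'(\sigma(\ta))=\sigma(g(\ta))\text{ for every }\ta\in M\},
\]
which is the intersection of $\clone{D}^{(n)}$ with the basic open set $\Phi_{\sigma(M),\,\sigma\circ g\circ\sigma^{-1}}$ of $\clone{O}_V^{(n)}$; hence it is open in $\clone{D}^{(n)}$.

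Since images of basic open sets are open and $h$ respects the arity decomposition, $h$ sends arbitrary open sets to open sets, so $h$ is open. The only delicate point is the initial algebraic definability of the class of constants, but this is handled cleanly by the identity $c\circ\langle e_1^2\rangle = c\circ\langle e_2^2\rangle$; once this is in hand, the rest is a direct translation between the defining equations of basic open sets and composition identities in $\clone{D}$.
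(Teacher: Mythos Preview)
Your argument is correct and is essentially the standard proof of this fact. Note that the paper does not actually prove this proposition itself; it merely quotes it from \cite{BodPinPon13}, so there is no in-paper proof to compare against. Your approach---identifying constants via the clone identity $c\circ\langle e_1^2\rangle=c\circ\langle e_2^2\rangle$, transporting them through $h$ to obtain a bijection $\sigma$, and then rewriting the defining conditions of basic open sets as compositional identities with constants---is exactly the argument one finds in \cite{BodPinPon13}. The only cosmetic slip is notational: the function on $\sigma(M)$ should strictly be written $\sigma\circ g\circ(\sigma^{-1})^n$ rather than $\sigma\circ g\circ\sigma^{-1}$, since $g$ takes $n$-tuples as input; but the intended meaning is clear and the conclusion stands.
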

If  it is known that  $\End(\bU)$ has automatic homeomorphicity with respect to $\class{K}$, then there is an alternative way to show openness for the isomorphisms from $\Pol(\bU)$ to polymorphism clones of structures from $\class{K}$, provided $\Aut(\bU)$ acts transitively on $U$:
\begin{proposition}[{\cite[Proposition 33]{BodPinPon17}}]
  If $\Aut(\bU)$ is transitive, then every injective clone homomorphism $h$ from $\Pol(\bU)$ to another clone, whose restriction to $\End(\bU)$ is open, is itself open.
\end{proposition}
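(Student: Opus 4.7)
The plan is to express every basic open condition on an $n$-ary polymorphism as an open condition on an associated unary endomorphism, and then transfer openness through $h$ using the hypothesis on $\End(\bU)$. First I would reduce to a sub-basis: the topology on $\Pol^{(n)}(\bU)$ has the sub-basis $\{\Phi_{\tu,c}:\tu\in U^n,\,c\in U\}$ where $\Phi_{\tu,c}:=\{f\in\Pol^{(n)}(\bU):f(\tu)=c\}$. Since $h$ is injective it commutes with finite intersections, and it always commutes with unions; hence it suffices to show that $h(\Phi_{\tu,c})$ is open in $h(\Pol^{(n)}(\bU))$ for every such sub-basic set.

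The key step uses the transitivity of $\Aut(\bU)$ to encode an $n$-ary evaluation as a unary one. Fix any base point $d\in U$; by transitivity, choose $\alpha_1,\dots,\alpha_n\in\Aut(\bU)$ with $\alpha_i(d)=u_i$, and set $\bar\alpha:=\langle\alpha_1,\dots,\alpha_n\rangle$. Then $(f\circ\bar\alpha)(d)=f(\tu)$, so
\[
  \Phi_{\tu,c}=\{f\in\Pol^{(n)}(\bU):f\circ\bar\alpha\in\Psi_{d,c}\},
\]
where $\Psi_{d,c}:=\{g\in\End(\bU):g(d)=c\}$. Because $h$ is an injective clone homomorphism, $h(f\circ\bar\alpha)=h(f)\circ h(\bar\alpha)$, and applying $h$ yields
\[
  h(\Phi_{\tu,c})=\bigl\{F\in h(\Pol^{(n)}(\bU)):F\circ h(\bar\alpha)\in h(\Psi_{d,c})\bigr\}.
\]

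By the hypothesis, $h\restr_{\End(\bU)}$ is open, so $h(\Psi_{d,c})$ is open in $h(\End(\bU))$; and since composition with the fixed tuple $h(\bar\alpha)$ is a continuous operation in the target clone, the right-hand side above is open in $h(\Pol^{(n)}(\bU))$, as required. The heart of the argument is spotting that an $n$-ary value condition can be pulled back to a unary one by precomposing with automorphisms; transitivity of $\Aut(\bU)$ is exactly what guarantees the existence of the $\alpha_i$, and without it this reduction fails. Once the reduction is in place, the remaining verifications (continuity of composition, preservation of unions and intersections by the injective $h$, openness inherited from the $\End(\bU)$-restriction) are routine.
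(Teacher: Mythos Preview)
The paper does not give its own proof of this statement; it is quoted verbatim from \cite[Proposition~32]{BodPinPon13} and used as a black box. So there is no paper-proof to compare against. That said, your argument is correct and is essentially the natural one: the reduction of the $n$-ary value condition $f(\tu)=c$ to the unary condition $(f\circ\langle\alpha_1,\dots,\alpha_n\rangle)(d)=c$ via transitivity is exactly the intended mechanism, and the remaining steps (injectivity of $h$ for the backward inclusion, continuity of right composition with a fixed tuple in the target clone, openness of $h(\Psi_{d,c})$ in $h(\End(\bU))$ from the hypothesis) are the routine verifications you indicate.

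Two small points of hygiene. First, your notation $h(\bar\alpha)$ is informal, since $\bar\alpha=\langle\alpha_1,\dots,\alpha_n\rangle$ is not itself an element of the clone; what you mean is $\langle h(\alpha_1),\dots,h(\alpha_n)\rangle$, and the identity $h(f\circ\bar\alpha)=h(f)\circ\langle h(\alpha_1),\dots,h(\alpha_n)\rangle$ comes directly from the clone-homomorphism axiom. Second, in the final step you should make explicit that the continuous map $F\mapsto F\circ\langle h(\alpha_1),\dots,h(\alpha_n)\rangle$ takes $h(\Pol^{(n)}(\bU))$ into $h(\End(\bU))$ (because $f\circ\bar\alpha\in\End(\bU)$ for every $f$), so that taking the preimage of the relatively open set $h(\Psi_{d,c})\subseteq h(\End(\bU))$ really does yield a relatively open set in $h(\Pol^{(n)}(\bU))$. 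With these clarifications the proof is complete.
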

\begin{remark}
  Note that our first strategy does not require us to show automatic homeomorphicity of $\Aut(\bU)$, $\overline{\Aut(\bU)}$, or $\End(\bU)$, in order to derive the automatic homeomorphicity of $\Pol(\bU)$.
\end{remark}

\subsection{About the second strategy}
Our second strategy is relatively similar to the one from \cite{BodPinPon17}. The first step remains the same. For the second step, we use a recent result about countable saturated structures: 
\begin{proposition}[{\cite[Proposition 2.5]{PecPec17}}]\label{newcrit}
		Let $\bU$ be a countable saturated structure such that $\Aut(\bU)$ has a trivial center. Then every endomorphism of $\overline{\Aut(\bU)}$ that fixes $\Aut(\bU)$ element-wise, is the identity on $\overline{\Aut(\bU)}$. 
\end{proposition}
\begin{corollary}\label{newmonoidauthom}
	Let $\bU$	be a countable saturated structure such that $\Aut(\bU)$ has a trivial center and such that $\Aut(\bU)$ has automatic homeomorphicity with respect to $\class{K}$. Then $\overline{\Aut(\bU)}$ has automatic homeomorphicity with respect to $\class{K}$, too. 
\end{corollary}
\begin{proof}
	This is a direct consequence of Proposition~\ref{newcrit} and Proposition~\ref{embcriterion} .
\end{proof}
The rest of the second strategy uses, apart from strong gate coverings,  a technique from \cite{BodPinPon17}, that was used there in order to show automatic homeomorphicity of the polymorphism clone of the Rado graph. We are going to make this technique applicable to a much wider class of relational structures. The key  is going to be a topological version of Birkhoff's theorem due to Bodirsky and Pinsker:
\begin{theorem}[{\cite[Theorem 4]{BodPin15}}]
  Let $\algebra{A}$ and $\algebra{B}$ be countable algebras over the same signature, whose clones of term functions are $\clone{A}$ and $\clone{B}$, respectively. Suppose that $\overline{\clone{A}}^{(1)}$ has an oligomorphic group of units and that $\algebra{B}$ is finitely generated. Then the following are equivalent:
  \begin{enumerate}
	\item $\algebra{B}\in \operatorname{H}\operatorname{S}\operatorname{P}^{\operatorname{fin}}(\algebra{A})$,
	\item the clone homomorphism $\xi\colon \clone{A}\to\clone{B}$ that maps $f^{\algebra{A}}$ to $f^{\algebra{B}}$, for all basic operations $f$, exists and is Cauchy-continuous.
  \end{enumerate}
\end{theorem}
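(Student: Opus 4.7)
The plan is to establish the two implications separately. The direction (1)$\Rightarrow$(2) is essentially classical Birkhoff together with a routine continuity observation, so I would start there: if $\algebra{B}$ is a homomorphic image of a subalgebra $\algebra{C}\le\algebra{A}^n$ via a surjection $\phi\colon\algebra{C}\to\algebra{B}$, then every identity valid in $\algebra{A}$ is valid in $\algebra{B}$, so $\xi$ is well-defined as a clone homomorphism by classical Birkhoff. For continuity at an $m$-ary $t^{\algebra{A}}$, let $F\subseteq B^m$ be finite; for each $(x_1,\ldots,x_m)\in F$ fix lifts $y_1,\ldots,y_m\in C\subseteq A^n$, and observe that $\xi(t^{\algebra{A}})(x_1,\ldots,x_m)=\phi(t^{\algebra{A}^n}(y_1,\ldots,y_m))$ is determined by $t^{\algebra{A}}$ on the finite set of $A^m$-tuples obtained by slicing the $y_j$ coordinatewise. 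Thus $\xi$ is continuous at $t^{\algebra{A}}$.

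For (2)$\Rightarrow$(1), which is the crux of the theorem, I would apply the standard relatively free algebra argument and then use oligomorphicity to descend to a finite power. Let $b_1,\ldots,b_k$ generate $\algebra{B}$. Since $\xi$ exists, $\algebra{B}$ satisfies every identity of $\algebra{A}$, so classical Birkhoff provides a surjection $\eta\colon\algebra{F}\to\algebra{B}$, $x_i\mapsto b_i$, where $\algebra{F}$ is the free algebra on $k$ generators in $V(\algebra{A})$. Identifying $\algebra{F}$ with the subalgebra of $\algebra{A}^{A^k}$ generated by the coordinate functions $e_i\colon A^k\to A$, I would introduce the componentwise action of the group of units $\group{G}$ of $\overline{\clone{A}}^{(1)}$ on $A^k$. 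Oligomorphicity yields only finitely many orbits, with representatives $\bar{a}^{(1)},\ldots,\bar{a}^{(n)}$; let $\pi\colon\algebra{A}^{A^k}\to\algebra{A}^n$ be projection onto these coordinates, set $\algebra{C}:=\pi(\algebra{F})\le\algebra{A}^n$ (generated by the $\pi(e_i)$), and aim to extend $\pi(e_i)\mapsto b_i$ to a surjective homomorphism $\algebra{C}\to\algebra{B}$; this would place $\algebra{B}$ in $\operatorname{H}\operatorname{S}\operatorname{P}^{\operatorname{fin}}(\algebra{A})$.

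The main obstacle is the well-definedness of this last surjection: given terms $t,s$ with $t^{\algebra{A}}(\bar{a}^{(j)})=s^{\algebra{A}}(\bar{a}^{(j)})$ for $j=1,\ldots,n$, one must conclude $t^{\algebra{B}}(b_1,\ldots,b_k)=s^{\algebra{B}}(b_1,\ldots,b_k)$. My plan is to exploit the conjugation action of $g\in\group{G}$ on $\clone{A}$, namely $t^{\algebra{A}}\mapsto g\circ t^{\algebra{A}}\circ\langle g^{-1},\ldots,g^{-1}\rangle$, which preserves $\overline{\clone{A}}$: approximating $g$ and $g^{-1}$ in the pointwise topology by elements of $\clone{A}^{(1)}$ produces a sequence of honest terms whose $\xi$-images can be controlled by the continuity hypothesis, so that after unfolding, two terms agreeing on every orbit representative yield $\xi$-images that cannot be separated on $(b_1,\ldots,b_k)$. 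The technically delicate part is reconciling the pointwise topology on $\clone{A}$ with the fact that the conjugating units live a priori only in the closure $\overline{\clone{A}^{(1)}}$; this is where the full weight of oligomorphicity and of the continuity of $\xi$ must be combined, and where I expect the bulk of the proof effort to lie.
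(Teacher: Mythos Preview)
The paper does not prove this theorem; it is quoted verbatim from \cite[Theorem 4]{BodPin15} and used as a black box in the proof of Proposition~\ref{autoopencriterion}. There is therefore no ``paper's own proof'' to compare your proposal against.

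That said, your outline is broadly the right shape for the Bodirsky--Pinsker argument. The direction (1)$\Rightarrow$(2) is indeed routine. For (2)$\Rightarrow$(1) your plan---realise the free $V(\algebra{A})$-algebra on $k$ generators inside $\algebra{A}^{A^k}$, then project onto a finite set of orbit representatives of the $\group{G}$-action on $A^k$---is exactly the key move. Your identification of the delicate point is also accurate: one must show that if two $k$-ary term functions $t^{\algebra{A}},s^{\algebra{A}}$ agree on the chosen orbit representatives then $\xi(t^{\algebra{A}})$ and $\xi(s^{\algebra{A}})$ agree on the generating tuple $(b_1,\dots,b_k)$. The conjugation idea works, but the cleaner way to organise it (and the way the original paper does) is to observe that agreement on all orbit representatives forces $t^{\algebra{A}}$ and $s^{\algebra{A}}$ to lie in the same $\overline{\clone{A}}$-closure class relative to the $\group{G}$-action, and then to use continuity of $\xi$ directly on an approximating sequence of term functions rather than trying to push conjugation through $\xi$. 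Your worry about units living only in $\overline{\clone{A}^{(1)}}$ is handled precisely by the hypothesis that the group of units of $\overline{\clone{A}}^{(1)}$ is oligomorphic: the orbits one needs are orbits of that group, and continuity of $\xi$ is all that is required to transfer the resulting approximations to $\clone{B}$.
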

\begin{remark}
	Note that if $\clone{A}$ is closed, then Cauchy-continuous may be replaced by just continuous in the previous Theorem.
\end{remark}

Before being able to state the main result of this subsection, another, by now well-established property of ages of relational structures needs to enter the stage---the homo-amalgamation property (\HAP):
\begin{definition}
  Let $\class{C}$ be a class of $\uSigma$-structures. We say that $\class{C}$ has the \emph{homo-amalgamation property} (\HAP) if for all $\bA$, $\bB$, $\bC$ from $\class{C}$, for all homomorphisms $f\colon   \bA\to \bB$, and for all embeddings $g\colon   \bA\injto \bC$, there exists $\bD\in\class{C}$, a homomorphism $\hat{f}\colon   \bC\to \bD$, and an embedding $\hat{g}\colon   \bB\injto \bD$ such that  the following diagram commutes:
  \[
  \begin{tikzcd}
    \bC \rar[dashed]{\hat{f}}& \bD\\
    \bA  \uar[hook]{g}\rar{f}&  \bB\uar[hook,dashed]{\hat{g}}.
  \end{tikzcd}
  \]
\end{definition}
In the rest of this subsection, we are going to prove the following result:
\begin{proposition}\label{autohomhap}
  Let $\bU$ be a countable, homogeneous, $\omega$-categorical relational structure such that 
  \begin{enumerate}
    \item $\Aut(\bU)$ acts transitively on $U$,
    \item $\Age(\bU)$ has the free amalgamation property,
    \item $\Age(\bU)$ is closed with respect to finite products,
    \item $\Age(\bU)$ has the \HAP.
  \end{enumerate}
  Then every continuous isomorphism from $\Pol(\bU)$ to another closed subclone $\clone{D}$ of $\clone{O}_U$ is a homeomorphism.
\end{proposition}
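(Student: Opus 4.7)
The plan is to derive continuity of $h^{-1}$ from the given continuity of $h$ by applying the topological Birkhoff theorem of Bodirsky and Pinsker in both directions. Since $\clone{D}$ is a closed subclone of $\clone{O}_U$, write $\clone{D}=\Pol(\bV)$ for some relational structure $\bV$ on $U$. Over a signature indexed by $\Pol(\bU)$, form the algebras $\algebra{A}=(U;(f)_{f\in\Pol(\bU)})$ with each symbol interpreted as itself, and $\algebra{B}=(U;(h(f))_{f\in\Pol(\bU)})$ with each symbol $f$ interpreted as $h(f)$. Their term clones are $\Pol(\bU)$ and $\clone{D}$, and the canonical clone homomorphism between them is precisely $h$. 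The $\omega$-categoricity of $\bU$ makes $\End(\bU)=\Pol(\bU)^{(1)}$ have an oligomorphic group of units $\Aut(\bU)$, and transitivity of $\Aut(\bU)$ makes $\algebra{A}$ generated by any single element.

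Feeding the continuous $h$ into the topological Birkhoff theorem yields $\algebra{B}\in\operatorname{H}\operatorname{S}\operatorname{P}^{\operatorname{fin}}(\algebra{A})$: there are $k\in\bN$, a $\Pol(\bU)$-closed $S\subseteq U^k$, and a surjection $\psi\colon S\to U$ with $\psi(f(s_1,\dots,s_n))=h(f)(\psi(s_1),\dots,\psi(s_n))$ for every $f\in\Pol(\bU)$ and $s_i\in S$. Specializing to automorphisms, $\psi^n\colon S^n\to U^n$ intertwines the diagonal $\Aut(\bU)$-action on $S^n\subseteq U^{kn}$ with the $h(\Aut(\bU))$-action on $U^n$, and is surjective. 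Since $\Aut(\bU)$ has only finitely many orbits on each $U^{kn}$, the number of $h(\Aut(\bU))$-orbits on every $U^n$ is bounded above by these, hence finite. Consequently $h(\Aut(\bU))$ is oligomorphic on $U$, the closed monoid $\clone{D}^{(1)}$ has an oligomorphic group of units, and a finite transversal for the action of $h(\Aut(\bU))$ on $U$ generates $\algebra{B}$; dually, the algebra $\algebra{B}':=(U;(h^{-1}(g))_{g\in\clone{D}})$ is generated by a single element via the transitive group $\Aut(\bU)\subseteq\Pol(\bU)$.

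Now set $\algebra{A}':=(U;(g)_{g\in\clone{D}})$ with each symbol interpreted as itself. The term clones of $\algebra{A}'$ and $\algebra{B}'$ are $\clone{D}$ and $\Pol(\bU)$, and the canonical clone homomorphism between them is exactly $h^{-1}$. The Birkhoff hypotheses (oligomorphic units of $\overline{\clone{D}}^{(1)}=\clone{D}^{(1)}$ and finite generation of $\algebra{B}'$) have just been established, so continuity of $h^{-1}$ will follow once $\algebra{B}'\in\operatorname{H}\operatorname{S}\operatorname{P}^{\operatorname{fin}}(\algebra{A}')$ is proved, i.e., once one exhibits $k'\in\bN$, a $\clone{D}$-closed $S'\subseteq U^{k'}$, and a surjection $\psi'\colon S'\to U$ satisfying
\[ \psi'(h(f)(s_1,\dots,s_n))\;=\;f(\psi'(s_1),\dots,\psi'(s_n)) \]
for every $f\in\Pol(\bU)$ and $s_i\in S'$.

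Constructing $(S',\psi')$ is the main obstacle, and this is where the hypotheses on $\Age(\bU)$ finally enter. By the characterization developed earlier in the paper, free amalgamation, product closure, and the \HAP together imply that $\bU$ admits a universal homogeneous polymorphism of every finite arity. Using these, I would build $S'$ and $\psi'$ by a \Fraisse-style back-and-forth argument inside $\overline{\Age(\bU)}$: the \HAP lets partial values of $\psi'$ be extended homomorphically and compatibly with the action of every $f\in\Pol(\bU)$, free amalgamation glues local extensions together without creating new constraints, and product closure ensures that the coordinate-wise $\bV$-substructures produced at each stage of the construction remain in $\Age(\bV)\subseteq\overline{\Age(\bU)}$. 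Once $(S',\psi')$ is in hand, the topological Birkhoff theorem yields continuity of $h^{-1}$; combined with the given continuity of $h$, this makes $h$ the desired homeomorphism.
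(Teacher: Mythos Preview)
Your overall plan---use topological Birkhoff forwards, deduce oligomorphicity on the $\clone{D}$ side, then apply Birkhoff backwards---is attractive, but the backward step has a real gap. To conclude continuity of $h^{-1}$ you must exhibit $(S',\psi')$ witnessing $\algebra{B}'\in\operatorname{H}\operatorname{S}\operatorname{P}^{\operatorname{fin}}(\algebra{A}')$, and here your argument becomes a promissory note. The sentence ``product closure ensures that the coordinate-wise $\bV$-substructures produced at each stage of the construction remain in $\Age(\bV)\subseteq\overline{\Age(\bU)}$'' is not justified: you know nothing about $\bV$ beyond $\Pol(\bV)=\clone{D}$, so there is no reason $\Age(\bV)$ should sit inside $\overline{\Age(\bU)}$, and no evident way to run a \Fraisse construction \emph{inside} $\bV$. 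Likewise, universal homogeneous polymorphisms of $\bU$ give you control over how $\Pol(\bU)$ factors through self-embeddings of $\bU$; they do not by themselves produce a $\clone{D}$-invariant $S'\subseteq U^{k'}$ together with a surjection intertwining $h(f)$ with $f$. Without a concrete construction of $(S',\psi')$, the proof is incomplete.

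The paper takes a different route that avoids this difficulty entirely. It first shows (Proposition~\ref{qeppfcriterion}) that the hypotheses force $\bU$ to have quantifier elimination for primitive positive formulae, and then (Proposition~\ref{autoopencriterion}) analyzes the forward Birkhoff data $(\algebra{S},\sim)$ directly: using free amalgamation and pp quantifier elimination, one proves that the congruence $\sim$ on $S\subseteq U^n$ must refine equality in some coordinate $i_0$, i.e.\ the pp-type defining $\sim$ contains a formula $x_{i_0}=y_{i_0}$. That single combinatorial fact, combined with transitivity of $\Aut(\bU)$, is enough to show by hand that the composite $\xi'=r\circ\xi$ is injective and open, hence a homeomorphism; openness of $\xi$ follows. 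No second invocation of Birkhoff is needed. If you want to repair your argument, the natural fix is to abandon the search for $(S',\psi')$ and instead extract openness from the structure of the congruence $\sim$, as the paper does.
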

As usual, before proving this proposition, let us collect the necessary tools: Recall that a consistent set of primitive positive formulae with free variables in $\{x_1,\dots,x_n\}$ is called a \emph{primitive positive $n$-type}. To a structure $\bA$ and a relation $\sigma\subseteq A^n$ we may associate a primitive positive type according to 
\[
	\Tpp_\bA(\sigma):=\{\varphi(x_1,\dots,x_n)\mid \forall \ta\in\sigma:\,\bA\models\varphi(\ta)\}.
\]
Primitive positive types that arise in this way are called \emph{closed}. A primitive positive $n$-type $\Psi$ is called \emph{complete} if there exists a structure $\bA$ and a finite relation $\sigma\subseteq A^n$, such that $\Psi=\Tpp_\bA(\sigma)$. 

Recall also that a structure  is called \emph{weakly oligomorphic} if its endomorphism monoid has just finitely many invariant relations of every given finite arity \cite{MasPec11}. By a result by Ma\v{s}ulovi\'c \cite[Theorem 2]{Mas16}, a countable structure $\bA$ is weakly oligomorphic if and only if its polymorphism clone has just finitely many invariant relations of every finite arity (cf.\ also \cite[Proposition 4.8]{PecPec15}). Finally, by \cite[Proposition 4.7]{PecPec15}, $\bA$ is weakly oligomorphic, if and only if it affords just finitely many closed primitive positive types of every finite arity. Note that this implies immediately that in a countable weakly oligomorphic structure all closed primitive positive types are complete.
\begin{lemma}\label{oneGeneratedRel}
    Let $\bA$ be a weakly oligomorphic relational structure with quantifier elimination for primitive positive formulae, whose age is closed with respect to finite products. Then every complete primitive positive type $\Phi$ over $\bA$ is of the shape $\Tpp_\bA(\ta)$ for a suitable tuple $\ta$ of elements of $A$. 
\end{lemma}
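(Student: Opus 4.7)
The plan is to realize $\Phi$ as the pp type of a single tuple obtained by embedding a direct product of small substructures of $\bA$ back into $\bA$. Write $\Phi=\Tpp_\bA(\sigma)$ for some finite relation $\sigma=\{\ta_1,\dots,\ta_k\}\subseteq A^n$, so that $\Phi=\bigcap_{i=1}^{k}\Tpp_\bA(\ta_i)$. For each $i$ let $\bA_i$ denote the (finite) substructure of $\bA$ whose carrier is the set of entries of $\ta_i$; each $\bA_i$ lies in $\Age(\bA)$.

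By the assumption that $\Age(\bA)$ is closed under finite products in $\cat{C}_\uSigma$, the direct product $\bB:=\bA_1\times\cdots\times\bA_k$ again belongs to $\Age(\bA)$, so there is an embedding $\eta\colon\bB\injto\bA$. Define $\tb\in B^n$ coordinatewise by $\tb(j):=(\ta_1(j),\dots,\ta_k(j))$ and set $\ta:=\eta(\tb)\in A^n$. I claim $\Tpp_\bA(\ta)=\Phi$.

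To verify the claim I would use the quantifier elimination hypothesis to reduce everything to atomic formulae. For any pp formula $\varphi(\tx)$ there is a conjunction of atomic formulae $\psi(\tx)$ with $\bA\models\forall\tx(\varphi(\tx)\leftrightarrow\psi(\tx))$. Since atomic formulae are preserved and reflected by the embedding $\eta$, we have $\bA\models\psi(\ta)$ iff $\bB\models\psi(\tb)$. In a direct product an atomic formula holds of a tuple iff it holds coordinatewise, and each $\bA_i$ is an induced substructure of $\bA$; hence $\bB\models\psi(\tb)$ iff $\bA\models\psi(\ta_i)$ for every $i$. Chaining these equivalences back through the QE yields $\varphi\in\Tpp_\bA(\ta)$ iff $\varphi\in\Tpp_\bA(\ta_i)$ for every $i$, i.e.\ iff $\varphi\in\Phi$.

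The main obstacle I anticipate is merely making the interaction of QE with the product construction and the embedding airtight on atomic formulae; the rest is bookkeeping. Notably, the weak oligomorphicity hypothesis does not appear to enter the argument itself --- it serves to place the lemma in the setting in which closed and complete pp types coincide, so that the conclusion is meaningful for the reconstruction applications that follow.
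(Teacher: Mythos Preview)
Your argument is correct and is essentially the paper's own proof: transpose the finite set of realizing tuples into a single tuple living in a finite product, embed that product back into $\bA$, and then use quantifier elimination to reduce the comparison of pp types to atomic formulae, which behave well under embeddings and products. The only cosmetic difference is that the paper takes $\bB$ to be the substructure of $\bA^k$ spanned by the transposed tuples, whereas you take the full product $\bA_1\times\cdots\times\bA_k$; either choice works, since both are finite and lie in $\Age(\bA)$ by the product-closure hypothesis (together with \HP).

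Your remark about weak oligomorphicity is also accurate: the hypothesis is not used in the proof of the lemma itself (the definition of ``complete'' already furnishes the finite $\sigma$), but it is what guarantees, in the surrounding argument, that every \emph{closed} pp type is complete.
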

\begin{proof}
    Let $\Phi$ be an $m$-ary complete primitive positive type over $\bA$. Then, since $\bA$ is weakly oligomorphic, there exists $\{\ta_1,\dots,\ta_n\}\subseteq A^m$ such that $\Phi=\Tpp_\bA(\{\ta_1,\dots,\ta_n\})$. Suppose $\ta_j=(a_{1,j},\dots,a_{m,j})$ for $j\in\{1,\dots,n\}$. Let $\tb_i:=(a_{i,1},\dots,a_{i,n})$, for $i\in\{1,\dots,m\}$. Let $\bB$ be the substructure of $\bA^n$ spanned by $\{\tb_1,\dots,\tb_m\}$. Since $\Age(\bA)$ is closed with respect to finite products, we have $\bB\in\Age(\bA)$. Let $\iota\colon \bB\injto\bA$ be an embedding from $\bB$ into $\bA$, and let $c_i:=\iota(\tb_i)$, for $i\in\{1,\dots,m\}$. Then $\Tpp_\bA((c_1,\dots,c_n))$ contains the same atomic formulae like $\Phi$. Since $\bA$ has quantifier elimination for primitive positive formulae, we have $\Phi=\Tpp_\bA((c_1,\dots,c_n))$. 
\end{proof}

\begin{proposition}\label{autoopencriterion}
    Let $\bU$ be a countable, homogeneous, $\omega$-categorical relational structure with quantifier elimination for primitive positive formulae such that 
    \begin{enumerate}
    \item $\Aut(\bU)$ acts transitively on $U$,
    \item $\Age(\bU)$ has the free amalgamation property,
    \item $\Age(\bU)$ is closed with respect to finite products.
    \end{enumerate}
     Then every continuous isomorphism to another closed subclone $\clone{D}$ of $\clone{O}_U$ is a homeomorphism.
\end{proposition}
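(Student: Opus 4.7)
The plan is to establish openness of $\xi$, which combined with continuity will yield that $\xi$ is a homeomorphism. Openness is equivalent to continuity of $\xi^{-1}$, and the main tool will be the topological Birkhoff theorem stated above, applied in both directions. First, I would use the standard fact that every closed subclone of $\clone{O}_U$ is a polymorphism clone to replace $\clone{D}$ by $\Pol(\bV)$ for some relational structure $\bV$ on $U$, and then form the algebras $\algebra{A} := (U; (f)_{f \in \Pol(\bU)})$ and $\algebra{B} := (U; (\xi(f))_{f \in \Pol(\bU)})$. Their clones of term operations are $\Pol(\bU)$ and $\clone{D}$ respectively, and the clone homomorphism sending $f^{\algebra{A}}$ to $f^{\algebra{B}}$ is precisely $\xi$.

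The first application is the easier direction. By $\omega$-categoricity, $\overline{\Pol(\bU)^{(1)}} = \End(\bU)$ has the oligomorphic group $\Aut(\bU)$ of units, and transitivity of $\Aut(\bU)$, transported through the clone isomorphism $\xi$ to the group of units of $\clone{D}^{(1)}$, makes $\algebra{B}$ finitely (in fact, one-)generated. The continuity of $\xi$ together with topological Birkhoff then delivers $\algebra{B} \in \operatorname{H}\operatorname{S}\operatorname{P}^{\operatorname{fin}}(\algebra{A})$. Unpacking, there exist $n \ge 1$, a subalgebra $\algebra{C} \le \algebra{A}^n$ with universe $C \subseteq U^n$, and a surjective homomorphism $\pi \colon \algebra{C} \twoheadrightarrow \algebra{B}$; equivalently, $C$ is a primitive positive definable subset of $\bU^n$, and $\pi \colon C \to U$ intertwines the diagonal action of each $f \in \Pol(\bU)$ on $C$ with the action of $\xi(f)$ on $U$.

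The core of the argument is the reverse containment $\algebra{A} \in \operatorname{H}\operatorname{S}\operatorname{P}^{\operatorname{fin}}(\algebra{B})$. Here Lemma~\ref{oneGeneratedRel} becomes decisive: it asserts that under quantifier elimination for primitive positive formulae together with product closure of $\Age(\bU)$, every complete primitive positive type of $\bU$ is realized by a single tuple. The plan is to use this to choose, canonically over $u \in U$, a representative $s(u) \in C$ with $\pi(s(u)) = u$ in an $\Aut(\bU)$-equivariant way, using transitivity to reduce to a single orbit and free amalgamation to glue local choices into a global section whose graph is pp-definable in $\bU$. Running the same construction on the other side of $\xi$ --- using that $\xi$ is an isomorphism of abstract clones and that $\Age(\bU)$ is closed under products --- should produce the dual data: a subalgebra $\algebra{C}' \le \algebra{B}^{n'}$ and a surjective homomorphism $\pi' \colon \algebra{C}' \twoheadrightarrow \algebra{A}$. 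A second application of topological Birkhoff, this time to $\xi^{-1}$, will then yield the desired continuity.

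The main obstacle will be precisely this ``duality'' construction of $(\algebra{C}', \pi')$ from $(\algebra{C}, \pi)$: the datum $\xi$ is given only as an abstract clone isomorphism, whereas $\operatorname{H}\operatorname{S}\operatorname{P}^{\operatorname{fin}}$-membership is a concrete algebraic statement, and there is no formal reason the containment should be symmetric. The structural hypotheses --- free amalgamation, product closure, and quantifier elimination for primitive positive formulae (through Lemma~\ref{oneGeneratedRel}) --- should supply exactly the rigidity needed to symmetrize the containment, by forcing primitive positive definable relations of $\bU$ to be generated by single orbit representatives that transport naturally through $\xi$. Once the reverse containment is established, $\xi$ is simultaneously a continuous bijection and an open map, hence a homeomorphism.
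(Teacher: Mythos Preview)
Your approach has a genuine gap exactly where you yourself flag the ``main obstacle'': the reverse containment $\algebra{A}\in\operatorname{H}\operatorname{S}\operatorname{P}^{\operatorname{fin}}(\algebra{B})$. You offer no concrete mechanism for producing $(\algebra{C}',\pi')$, only the hope that the structural hypotheses supply enough ``rigidity'' to symmetrize the situation. But the hypotheses are one-sided---they concern $\bU$, not the unknown structure underlying $\clone{D}$---so there is no formal reason symmetry should hold. Worse, even granting the reverse containment, your second invocation of topological Birkhoff requires that $\overline{\clone{D}}^{(1)}$ have an \emph{oligomorphic} group of units. Oligomorphicity is a property of a concrete permutation action, not of an abstract group, and it does not transport along the abstract clone isomorphism $\xi$. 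The same objection already bites earlier: your claim that $\algebra{B}$ is one-generated rests on transporting the transitivity of $\Aut(\bU)$ through $\xi$, but transitivity of $\Aut(\bU)$ on $U$ in no way implies transitivity of $\xi(\Aut(\bU))$ on $U$.

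The paper's argument avoids the symmetrization problem entirely and uses topological Birkhoff only once, in the direction where the oligomorphicity hypothesis is available. One passes to an \emph{arbitrary} finitely generated subalgebra $\algebra{B}$ of $\algebra{D}$ (rather than claiming $\algebra{D}$ itself is finitely generated), obtaining $\algebra{B}\cong\algebra{S}/{\sim}$ with $\algebra{S}\le\algebra{U}^n$. The decisive step is then purely structural: using Lemma~\ref{oneGeneratedRel} together with free amalgamation, one shows that the primitive positive type $\Tpp_\bU(\sigma^\sim)$ of the congruence must contain some equality $x_{i_0}=y_{i_0}$, i.e.\ the congruence \emph{fixes a coordinate}. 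This single coordinate projection is enough to verify injectivity of $r\circ\xi$ and to check openness by hand on basic clopen sets, with no second appeal to Birkhoff and no need for the reverse containment.
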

\begin{proof}
    The proof follows the lines of the proof of \cite[Lemma 51]{BodPinPon17}, where our claim is proved for the special case when $\bU$ is the Rado graph. Let $\xi\colon \Pol(\bU)\to\clone{D}$ be a continuous clone-isomorphism.
    
    First, for every $n\in\bN\setminus\{0\}$, and for every $f\in\Pol^{(n)}(\bU)$, let $\underline{f}$ be an $n$-ary operation symbol. Let $\uSigma$ be the algebraic signature, that consists of all newly defined operation symbols. Now we consider the algebras $\algebra{U}=(U,\Pol(\bU))$, $\algebra{D}=(U,\clone{D})$ as $\uSigma$-algebras, where for every $\uf\in\Sigma$ the interpretation of $\uf$ in $\algebra{U}$ is $f$ and the interpretation of $\uf$ in $\algebra{D}$ is  $\xi(f)$.  
    
    Let $\algebra{B}$ be some finitely generated subalgebra of $\algebra{D}$ with at least two elements, and let $r\colon \clone{D}\to\clone{O}_B$ be the restriction homomorphism defined by $r(g):=g\restr_B$. Let $\clone{D}_B$ be the image of $\clone{D}$ under $r$. Then $\algebra{B}=(B,\clone{D}_B)$, where  $\uf\in\Sigma$ is interpreted as $r(\xi(f))$, for all $f\in\Sigma$. 
    
    Since $(B,\clone{D}_B)$ is a subalgebra of $(U,\clone{D})$, it follows from \cite[Proposition 5]{BodPin15} that $r\colon \clone{D}\to\clone{D}_B$ is a continuous clone-homomorphism.  
    
    In the following, we will show that $\xi':=r\circ\xi$ is a homeomorphism. When this is done, it follows that $\xi$ is a homeomorphism, too, since in this case we have that $r$ is bijective, thus $r^{-1}$ is an open clone isomorphism, and thus $\xi=r^{-1}\circ \xi'$ is open.
    
    Since $\xi'$ is a continuous clone-homomorphism, and since $\algebra{B}$ is finitely generated, it follows from the topological Birkhoff theorem that $\algebra{B}$ is contained in the pseudovariety generated by $\algebra{U}$. In other words, $\algebra{B}$  is a homomorphic image of a subalgebra of a finite power of $\algebra{U}$. Let $\algebra{S}$ be the  corresponding subalgebra in this process, and let $\sim$ be the kernel of the surjective homomorphism from $\algebra{S}$ to $\algebra{B}$. Then for some $n$, we have that $S$ is an $n$-ary invariant relation of $\Pol(\bU)$. Since $\bU$ is $\omega$-categorical, it follows from \cite[Theorem 4]{BodNes06}, that $S$ is definable by a set $\Psi$ of primitive positive formulae in the language of $\bU$. We may suppose without loss of generality that $\Psi=\Tpp_\bU(S)$. Also, without loss of generality, we may assume that $\Psi$ does not contain a formula of the shape $x_i=x_j$ for $i\neq j$. Thus, by Lemma \ref{oneGeneratedRel}, $S$ contains at least one irreflexive tuple. 
    
    The relation $\sim$ is a congruence relation of the algebra $\algebra{S}$, i.e., it is invariant under all term-functions of $\algebra{S}$. Note that the term functions of $\algebra{S}$ are just the elements of $\Pol(\bU)$ in their natural action on $n$-tuples. Thus, if we consider $\sigma^\sim:=\{\tu\tv\mid \tu,\tv\in S, \tu\sim\tv\}$, then $\sigma^\sim$ is a $2n$-ary invariant relation of $\Pol(\bU)$. By the same reasoning as above, $\sigma^\sim$ is defined through a set $\Phi$ of primitive positive formulae over $\bU$. Again, we may assume that $\Phi=\Tpp_\bU(\sigma^\sim)$. To improve readability, we use the following convention for the names of the variables in formulae from $\Phi$: Every formula in $\varphi\in\Phi$ shall be of the form $\varphi(\tx,\ty)$, where $\tx=(x_1,\dots,x_n)$ and where $\ty=(y_1,\dots,y_n)$. Clearly, because $\sim$ is reflexive and symmetric, if $\varphi(\tx,\ty)\in\Phi$, then we also have $\varphi(\tx,\tx)\in\Psi$ and $\varphi(\ty,\tx)\in\Phi$. 
    
    Observe that  $\Phi$ does not contain a formula of the shape $x_i=y_j$, for $i\neq j$, for otherwise we would obtain $x_i=x_j\in\Psi$---contradictory with our assumptions on $\Psi$. 
    
    We are now going to show that $\Phi$ necessarily contains a formula $x_i=y_i$, for some $i\in\{1,\dots,n\}$. Suppose that $\Phi$ does not contain any such formula. Since $\sim$ has more than one equivalence class, and since $\bU$ has quantifier elimination for primitive positive formulae, $\Phi$ contains an atomic  formula $\varphi(\tx,\ty)=\varrho(z_1,\dots,z_k)$, where $z_1,\dots,z_k\in\{x_1,\dots,x_n,y_1,\dots,y_n\}$, and where $\{z_1,\dots,z_k\}\cap\{x_1,\dots,x_k\}$ and $\{z_1,\dots,z_k\}\cap\{y_1,\dots,y_k\}$ are both nonempty. By Lemma~\ref{oneGeneratedRel}, there exists $\tu\tv\in\sigma^\sim$, such that $\Tpp_\bU(\tu\tv)=\Phi$. Moreover, we have $\Tpp_\bU(\tu)=\Tpp_\bU(\tv)=\Psi$. Let $\bU$ and $\bW$ the substructures of $\bU$ induced by $U=\{u_1,\dots,u_n\}$ and $W=U\cup \{v_1,\dots,v_n\}$, respectively. Let $\bW'$ be an isomorphic copy of $\bW$ such that $W'=U\cup\{v_1',\dots,v_n'\}$ and such that $W\cap W' =U$ and  are disjoint and such that $\iota\colon \bW\to\bW'$ defined through $\iota'\colon u_i\mapsto u_i,\,v_i\mapsto v_i'$ is an isomorphism. Then, since $\Age(\bU)$ has the free amalgamation property, we have that $\bW\oplus_\bU\bW'\in\Age(\bU)$. Thus, we can assume that $\bW\oplus_\bU\bW'\le\bU$. Let $\tv':=(v_1',\dots,v_n')$. Then by construction we have that $\Tpp_\bU^{(0)}(\tu\tv)=\Tpp_\bU^{(0)}(\tu\tv')$. Since $\bU$ has quantifier elimination for primitive positive formulae,  we also have $\Tpp_\bU(\tu\tv)=\Tpp_\bU(\tu\tv')$. Hence, $\tu\sim\tv'$. Since $\sim$ is symmetric and transitive, we have $\tv\sim\tv'$. Thus, we have $\varphi(\tx,\ty)\in\Tpp_\bU(\tv\tv')$. However, by the nature of the amalgamated free sum in free amalgamation classes, we have that $\varrho^\bW\cap \{v_1,\dots,v_n,v_1'\dots,v_n'\}^k=\emptyset$. With $\varrho^\bW=\varrho^\bU\cap W^k$, we arrive at a contradiction. Thus, our assumption was wrong and $\Phi$ contains a formula $x_{i_0}=y_{i_0}$ for some $i_0\in\{1,\dots,n\}$.
    
    Next we show that $\xi'$ is injective.  Without loss of generality we may assume that $\algebra{B}$ is equal to $\algebra{S}/_\sim$. Let $f,g\in\Pol^{(m)}(\bU)$ be two distinct functions. Then there exists $\ta=(a_1,\dots,a_m)\in U^m$, such that 
    \[ b:=f(a_1,\dots,a_m)\neq g(a_1,\dots,a_m)=:b'\]
    Since $\Aut(\bU)$ acts transitively on $U$, there exist 
    $\tc_1=(c_{1,1},\dots,c_{n,1}),\dots,\tc_m=(c_{1,m},\dots,c_{n,m})\in S$, 
    such that $c_{i_0,j}=a_j$, for each $j\in\{1,\dots,m\}$. Let 
    \[
    \tb=\begin{pmatrix}
    	b_1\\
    	\vdots\\
    	b_n
    \end{pmatrix}:=\begin{pmatrix}
    	f(c_{1,1},\dots,c_{1,m})\\
    	\vdots\\
    	f(c_{n,1},\dots,c_{n,m})
    \end{pmatrix}, \text{ and}\quad
	\tb'=\begin{pmatrix}
		b'_1\\
		\vdots\\
		b'_n
	\end{pmatrix}:=\begin{pmatrix}
    	g(c_{1,1},\dots,c_{1,m})\\
    	\vdots\\
    	g(c_{n,1},\dots,c_{n,m})
    \end{pmatrix}.
	\]
    Then $b_{i_0}=b\neq b'=b'_{i_0}$. Hence 
    $\xi'(f)([c_1]_\sim,\dots,[c_n]_\sim) = [\tb]_\sim\neq[\tb']_\sim = \xi'(g)([c_1]_\sim,\dots,[c_n]_\sim)$.
    Thus, $\xi'$ is injective (and hence bijective). 
    
    It remains to show that $\xi'$ is open. Let $a_0,\dots,a_k\in U$, and let $N$ be the basic clopen subset of $\Pol(\bU)$ that consists of all functions $f\in\Pol^{(k)}(\bU)$ with the property that $f(a_1,\dots,a_k)=a_0$. Let us define 
    \[ A:=\{[(b_1,\dots,b_n)]_\sim\in S/_\sim\mid b_{i_0}=a_0\}.\]
    Because $\Aut(\bU)$ acts transitively on $U$, it follows that $A$ is non-empty. For every $j\in\{1,\dots,k\}$ let $\tc_j=(c_{j,1},\dots,c_{j,n})$ be an element of $S$, such that $c_{j,i_0}=a_j$. Again, the existence of these tuples follows from the transitivity of $\Aut(\bU)$. We are going to show now that for all $f\in\Pol^{(k)}(\bU)$ we have 
    \[ 
    f(a_1,\dots,a_k)=a_0 \iff \xi'(f)([\tc_1]_\sim,\dots,[\tc_k]_\sim)\in A.
    \] 
    Indeed, if $f(a_1,\dots,a_k)=a_0$, then
    \[
    \xi'(f)([\tc_1]_\sim,\dots,[\tc_k]_\sim)=\begin{bmatrix}
    	f(c_{1,1},\dots,c_{1,k})\\
    	\vdots\\
    	f(c_{i_0,1},\dots,c_{i_0,k})\\
    	\vdots\\
    	f(c_{n,1},\dots,c_{n,k})
    \end{bmatrix}_\sim = 
\begin{bmatrix}
    	f(c_{1,1},\dots,c_{1,k})\\
    	\vdots\\
    	f(a_1,\dots,a_k)\\
    	\vdots\\
    	f(c_{n,1},\dots,c_{n,k})
    \end{bmatrix}_\sim    \]
    Thus, $\xi'(f)([\tc_1]_\sim,\dots,[\tc_k]_\sim)\in A$. 
    
    If, on the other hand, $\xi'(f)([\tc_1]_\sim,\dots,[\tc_k]_\sim)\in A$, then 
    $f(a_1,\dots,a_k)=f(c_{i_0,1},\dots,c_{i_0,k})= a_0$.
    Thus, we obtain that 
    \[
    	\xi'(N) = \bigcup_{[\tc_0]_\sim\in A} \{\xi'(f)\mid f\in\Pol^{(k)}(\bU), \xi'(f)([c_1]_\sim,\dots,[c_k]_\sim) = [c_0]_\sim\}. 
    \]
    Hence $\xi'(N)$ is open. This finishes the proof that $\xi'$ is open. 
\end{proof}
In order to make Proposition~\ref{autoopencriterion} applicable, we need a convenient criterion for a relational structure to have quantifier elimination for primitive positive formulae:
\begin{proposition}\label{qeppfcriterion}
	Let $\bU$ be a countable homogeneous $\omega$-categorical relational structure such that
	\begin{enumerate}
		\item $\Age(\bU)$ has the free amalgamation property,
		\item $\Age(\bU)$ is closed with respect to finite products,
		\item $\Age(\bU)$ has the \HAP.
	\end{enumerate}
	Then $\bU$ has quantifier elimination for primitive positive formulae.
\end{proposition}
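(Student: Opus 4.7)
The plan is to show that every p.p.\ formula $\varphi(\tx)$ is equivalent over $\bU$ to a finite conjunction of atomic formulas. Since $\bU$ is $\omega$-categorical and homogeneous, the relation $R := \varphi(\bU)$ is a finite union of $\Aut(\bU)$-orbits, each corresponding to a positive atomic type; I aim to exhibit a single ``minimum'' positive atomic type $\Theta$, realized by some tuple in $R$, whose conjunction already defines $R$. The two ingredients are (i) an upward-closure property of $R$ driven by \HAP, and (ii) the existence of a tuple in $R$ realizing the minimum atomic type, driven by closure of $\Age(\bU)$ under finite products.

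For (i), suppose $\bU \models \varphi(\ta)$ is witnessed by $\tc$, i.e.\ $\bU \models \psi(\ta, \tc)$ where $\varphi = \exists\ty\,\psi$. Let $\bA$ and $\bC$ be the substructures of $\bU$ induced by $\ta$ and $\ta\tc$ respectively, so $\bA \injto \bC$. For any homomorphism $f \colon \bA \to \bU$ whose image generates a substructure $\bB$, I would apply \HAP to the embedding $\bA \injto \bC$ and the homomorphism $f \colon \bA \to \bB$, obtaining $\bD \in \Age(\bU)$ together with $\hat g \colon \bB \injto \bD$ and $\hat f \colon \bC \to \bD$. An embedding $\iota \colon \bD \injto \bU$, post-composed with an automorphism of $\bU$ supplied by homogeneity so that $\iota \circ \hat g$ becomes the inclusion of $\bB$ into $\bU$, yields a homomorphism $\bC \to \bU$ extending $f$; its value on $\tc$ witnesses $\bU \models \varphi(f(\ta))$.

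For (ii), by $\omega$-categoricity, $R$ contains only finitely many orbits, with representatives $\ta_1, \dots, \ta_m$. Consider the tuple $\tilde\ta = (\ta_1, \dots, \ta_m)$ viewed as an $n$-tuple in $\bU^m$; since p.p.\ formulae are preserved by finite powers, $\bU^m \models \varphi(\tilde\ta)$ with some componentwise witnesses $\tilde\tc$ in $\bU^m$. The finite substructure of $\bU^m$ induced by $\tilde\ta\tilde\tc$ lies in $\Age(\bU^m)$, which is contained in $\Age(\bU)$ by closure under finite products, hence embeds into $\bU$. The image of $\tilde\ta$ is a tuple $\ta^* \in R$ whose positive atomic type is precisely $\Theta := \bigcap_{i} \mathrm{atp}^+(\ta_i)$, because a relation holds on $\tilde\ta$ in $\bU^m$ iff it holds on each $\ta_i$ in $\bU$.

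Combining these, for any $\tb$ satisfying $\bigwedge\Theta$, the assignment $\ta^* \mapsto \tb$ is a homomorphism from the substructure induced by $\ta^*$ into $\bU$, so (i) forces $\tb \in R$; the reverse inclusion is immediate. Finiteness of the defining conjunction follows from $\omega$-categoricity, which isolates each $\mathrm{atp}^+(\ta_i)$ by finitely many atomic formulas. I expect the main technical obstacle to be step (i): \HAP produces an amalgam in an abstract $\bD$, and one must use homogeneity to realign the image of $\bB$ inside $\bU$ with its original inclusion, verifying that the resulting homomorphism actually extends $f$. The free amalgamation hypothesis does not appear strictly essential for this outline; I suspect it is listed for uniformity with the hypotheses of Proposition~\ref{autoopencriterion}, or to enable an alternative direct construction of witnesses in place of \HAP in specific cases.
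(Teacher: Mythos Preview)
Your argument is correct and takes a genuinely different route from the paper's proof.

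The paper proceeds indirectly: it shows that $\bU$ is homomorphism-homogeneous (from \HAP), then uses free amalgamation together with Proposition~\ref{uhretract} to prove that every finite power $\bU^n$ is a retract of $\bU$, hence homomorphism-homogeneous; this yields polymorphism-homogeneity, and a cited result converts that (with $\omega$-categoricity) into quantifier elimination for primitive positive formulae. Your proof is direct and self-contained: step~(i) is effectively the one-step homomorphism extension property (equivalently, preservation of p.p.\ formulae under local homomorphisms), and step~(ii) manufactures a single realiser of the intersection type via the product. The paper's approach yields the stronger intermediate fact that $\bU^n$ is a retract of $\bU$, at the cost of invoking external machinery; your approach is more elementary and, as you suspected, does not use free amalgamation at all, so it actually establishes the conclusion under weaker hypotheses. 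In the paper's route, free amalgamation is genuinely needed to verify condition~\ref{cond1} of Proposition~\ref{uhretract}; your argument bypasses that proposition entirely.

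Two small points. First, your justification for finiteness of the defining conjunction is slightly misphrased: what you need is not that each $\mathrm{atp}^+(\ta_i)$ is finitely isolated, but that the single set $\Theta$ is equivalent over $\bU$ to a finite subset. This follows immediately from $\omega$-categoricity because there are only finitely many $n$-orbits, and for each orbit not contained in $R$ you may pick one formula of $\Theta$ it violates. Second, your step~(ii) tacitly assumes $R\neq\emptyset$ (so that $m\ge 1$); when $R=\emptyset$ one must either observe that this cannot occur under the hypotheses, or handle it by a separate trivial case. Neither point affects the substance of your argument.
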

Before proving this proposition, we need to recall a result about retracts of homogeneous structures:
\begin{proposition}\label{uhretract}
	Let $\bU$ be a countable homogeneous relational structure, and let $\bT\in\overline{\Age(\bU)}$, such that 
	\begin{enumerate}
		\item\label{cond1} for all $\bA,\bB_1,\bB_2\in\Age(\bU)$, $f_1\colon \bA\injto\bB_1$, $f_2\colon \bA\injto\bB_2$, $h_1\colon \bB_1\to\bT$, $h_2\colon \bB_2\to\bT$, if $h_1\circ f_1=h_2\circ f_2$, then there exists $\bC\in\Age(\bU)$, $g_1\colon \bB_1\injto\bC$, $g_2\colon \bB_2\injto\bC$, $h\colon \bC\to\bT$, such that the following diagram commutes:
		\[
		\begin{tikzcd}
			& & \bT \\
			\bB_1 \arrow[bend left]{urr}{h_1}\rar[hook,dashed]{g_1}& \bC\urar[dashed]{h}\\
			\bA \uar[hook]{f_1}\rar[hook]{f_2}& \bB_2.\arrow[bend right]{uur}[swap]{h_2}\uar[hook,dashed]{g_2}
		\end{tikzcd}
		\]
		\item\label{cond2}  
		for all $\bA,\bB\in\Age(\bU)$, $\iota\colon \bA\injto\bB$, $h\colon \bA\to\bT$ there exists $\hat{h}\colon \bB\to\bT$ such that the following diagram commutes:
		\[
		\begin{tikzcd}
			\bA \rar{h} \dar[hook,swap]{\iota}& \bT \\
			\bB \urar[swap,dashed]{\hat{h}}
		\end{tikzcd}
		\]
	\end{enumerate}
	Then $\bT$ is isomorphic to a retract of $\bU$.
\end{proposition}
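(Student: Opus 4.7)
The plan is a back-and-forth / cofinal construction producing an embedding $\iota\colon\bT\injto\bU$ and a homomorphism $r\colon\bU\to\bT$ with $r\circ\iota=\mathrm{id}_\bT$. Once such $\iota$ and $r$ exist, $\iota\circ r$ is an idempotent endomorphism of $\bU$ whose image is the substructure $\iota(\bT)\le\bU$, and that substructure is isomorphic to $\bT$, so $\bT$ is isomorphic to a retract of $\bU$. Fix enumerations $T=\{t_0,t_1,\dots\}$ and $U=\{u_0,u_1,\dots\}$, and build inductively finite substructures $\bT_0\le\bT_1\le\cdots\le\bT$ that exhaust $\bT$, finite substructures $\bU_0\le\bU_1\le\cdots\le\bU$ that exhaust $\bU$, embeddings $\iota_n\colon\bT_n\injto\bU_n$ coherent with the inclusions, and homomorphisms $r_n\colon\bU_n\to\bT$ coherent with the inclusions, such that $r_n\circ\iota_n$ is the inclusion $\bT_n\injto\bT$. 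Since $\bT\in\overline{\Age(\bU)}$ we have $\Age(\bT)\subseteq\Age(\bU)$, so all $\bT_n$ lie in $\Age(\bU)$ and are legitimate inputs to the hypotheses.

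The forth step, which absorbs the point $t_n$, is where hypothesis \ref{cond1} is used. Put $\bT_n':=\bT_n\cup\{t_n\}$ as a substructure of $\bT$ and apply \ref{cond1} with $\bA:=\bT_n$, $\bB_1:=\bU_n$, $\bB_2:=\bT_n'$, $f_1:=\iota_n$, $f_2$ the inclusion $\bT_n\injto\bT_n'$, $h_1:=r_n$, and $h_2$ the inclusion $\bT_n'\injto\bT$. The required equality $h_1f_1=h_2f_2$ holds because both composites equal the inclusion $\bT_n\injto\bT$. This produces $\bC\in\Age(\bU)$ with embeddings $g_1\colon\bU_n\injto\bC$ and $g_2\colon\bT_n'\injto\bC$ satisfying $g_1\iota_n=g_2f_2$, and a homomorphism $h\colon\bC\to\bT$ with $hg_1=r_n$ and $hg_2$ equal to the inclusion $\bT_n'\injto\bT$. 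The standard one-point extension property of the \Fraisse limit $\bU$ (homogeneity together with universality) lets me realise $g_1$ as the inclusion into $\bU$, that is, identify $\bC$ with a finite substructure $\bU_n'\le\bU$ containing $\bU_n$. Under this identification, $g_2$ becomes an extension $\iota_n'\colon\bT_n'\injto\bU_n'$ of $\iota_n$, and $h$ becomes a homomorphism $r_n'\colon\bU_n'\to\bT$ extending $r_n$ with $r_n'\circ\iota_n'$ equal to the inclusion $\bT_n'\injto\bT$.

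The back step, which absorbs the point $u_n$, uses hypothesis \ref{cond2}. Set $\bU_n'':=\bU_n'\cup\{u_n\}$ inside $\bU$, apply \ref{cond2} to the inclusion $\bU_n'\injto\bU_n''$ and the homomorphism $r_n'$ to obtain an extension $r_n''\colon\bU_n''\to\bT$, and then take $\bT_{n+1}:=\bT_n'$, $\bU_{n+1}:=\bU_n''$, $\iota_{n+1}:=\iota_n'$, $r_{n+1}:=r_n''$. Iterating and passing to the direct limit yields the required $\iota$ and $r$, from which the retract of $\bU$ isomorphic to $\bT$ is recovered as described above.

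The main obstacle is the forth step: extending $\iota_n$ alone (by homogeneity of $\bU$) or $r_n$ alone (by \ref{cond2}) would be routine, but the two must be extended simultaneously and compatibly whenever a new point of $\bT$ is adjoined, and it is exactly this ``double cocone'' situation that hypothesis \ref{cond1} is tailored to resolve. Everything else---the back step, the coherence with the inclusions, and the concluding retract argument---is bookkeeping.
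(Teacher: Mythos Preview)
Your argument is correct. The forth step correctly deploys hypothesis~\ref{cond1} to simultaneously extend the partial embedding and the partial retraction when absorbing a point of $\bT$, and your use of the saturation (extension) property of the \Fraisse limit $\bU$ to realise $\bC$ as an actual substructure of $\bU$ containing $\bU_n$ is exactly right. The back step via hypothesis~\ref{cond2} and the passage to the direct limit are routine, as you say; the only thing you left implicit is the base case $\bT_0=\bU_0=\emptyset$, which is harmless.

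The paper's own proof is a one-line citation of \cite[Theorem~4.2]{PecPec13a}, a general result on universal homogeneous objects in comma-categories developed in the authors' axiomatic \Fraisse framework. Your proof is therefore a genuinely different presentation: you give a direct, self-contained back-and-forth construction rather than invoking the categorical machinery. In substance the two arguments are close cousins---the cited theorem is proved by an abstract version of precisely this kind of alternating extension---but your version has the virtue of being elementary and immediately readable without the comma-category apparatus, at the cost of being specific to this concrete situation rather than falling out of a general theory.
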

\begin{proof}
	This follows directly from \cite[Theorem 4.2]{PecPec13a}.
\end{proof}

\begin{proof}[Proof of Proposition~\ref{qeppfcriterion}]
	We are going to show that $\bU$ is polymorphism homogeneous (in the sense of \cite{PecPec15}). Then it follows from  \cite[Corollary 3.13]{PecPec15} and the assumption that $\bU$ is $\omega$-categorical, that $\bU$ has quantifier elimination for primitive positive formulae. 
	
	In order to show that $\bU$ is polymorphism homogeneous, we are going to show that all finite powers of $\bU$ are homomorphism homogeneous. After that it follows from \cite[Proposition 2.1]{PecPec15}, that $\bU$ is polymorphism homogeneous.
	
	In order to show that every finite power of $\bU$ is homomorphism homogeneous, we are first going to argue that $\bU$ is homomorphism homogeneous (this follows from \cite[Proposition 3.8]{Dol14}; note that the 1PHEP mentioned in this paper is equivalent to the \HAP). Then we will show that every finite power of $\bU$ is in fact isomorphic to a retract of $\bU$. Finally, it follows from the folklore fact that retracts of homomorphism homogeneous structures are homomorphism homogeneous, that all finite powers of $\bU$ are homomorphism homogeneous.
	
	In order to show that every finite power of $\bU$ is isomorphic to a retract of $\bU$, we will make use of Proposition~\ref{uhretract}. First of all, since $\Age(\bU)$ has the free amalgamation property, condition \ref{cond1} of Proposition~\ref{uhretract} is satisfied for every structure $\bT$, younger than $\bU$. We simply need to choose $\bC$ to be equal to $\bB_1\oplus_\bA\bB_2$. 
	
	Let us verify condition \ref{cond2} of Proposition~\ref{uhretract}, when $\bT=\bU^n$: Let $\bA,\bB\in\Age(\bU)$, let $\iota\colon \bA\injto\bB$ be an embedding, and let $h\colon \bA\to\bU^n$ be a homomorphism. For every $i\in\{1,\dots,n\}$ let $h_i\colon \bA\to\bU$ be defined through $h_i:=e_i^n\circ h$. Since $\bU$ is homomorphism homogeneous, it follows that it is also weakly homomorphism homogeneous. Thus, for every $i\in\{1,\dots,n\}$, there exists a homomorphism $\hat{h}_i\colon \bB\to\bU$, such that $\hat{h}_i\circ\iota=h_i$. Now we may define $\hat{h}$ according to
	\[
	\hat{h}:=\langle h_1,\dots,h_n\rangle \colon  \bB\to\bU^n \colon  b\mapsto (\hat{h}_1(b),\dots,\hat{h}_n(b)). 
	\]
	Clearly, with this definition we have $\hat{h}\circ\iota= h$. Thus, we may apply Proposition~\ref{uhretract} to the case $\bT=\bU^n$, and we obtain that $\bU^n$ is isomorphic to a retract of $\bU$. 
\end{proof}

\begin{proof}[Proof of Proposition~\ref{autohomhap}]
	This immediately follows from Proposition~\ref{autoopencriterion}, together with Proposition~\ref{qeppfcriterion}.
\end{proof}

\begin{remark}
	Retracts of homogeneous structures were considered also by Dolinka and Kubi\'s (\cite{Dol12,Kub15}). 
\end{remark}

\subsection{Existence of strong gate coverings}%\label{exStrongGates}
The hardest part in both our strategies for showing automatic homeomorphicity is to prove the existence of a strong gate covering. A major part of the  rest of the paper will be devoted to this task.
\begin{definition}
  Let $\bU$ be a structure.  An $n$-ary  polymorphism $u$ of $\bU$ is called \emph{universal} if for all structures $\bA\in\overline{\Age(\bU)}$ and for every homomorphism $f\colon  \bA^n\to\bU$ there exist $\iota\colon  \bA\injto\bU$ such that for all $(a_1,\dots,a_n)\in\bA^n$ holds $f(a_1,\dots,a_n)= u(\iota(a_1),\dots,\iota(a_n))$.
\end{definition}

\begin{definition}
  Let $\bU$ be a structure.  An $n$-ary polymorphism $u$ of $\bU$ is called \emph{homogeneous} if for all structures $\bA\in\Age(\bU)$, for every    homomorphism $f\colon  \bA^n\to\bU$, for all embeddings $\iota_1,\iota_2\colon  \bA\injto \bU$ with
    \begin{equation*}
      \forall (a_1,\dots, a_n)\in A^n:\,
      u(\iota_1(a_1),\dots,\iota_1(a_n)) = f(a_1,\dots,a_n) = u(\iota_2(a_1),\dots,\iota_2(a_n))
    \end{equation*}
    there exists $h\in\Aut(\bU)$ such that
    \begin{enumerate}
    \item $h\circ \iota_{1}=\iota_{2}$,
    \item for all $(a_1,\dots,a_n)\in U^n$ we have $u(h(a_1),\dots,h(a_n)) = u(a_1,\dots,a_n)$.
    \end{enumerate}
\end{definition}

\begin{lemma}\label{closeiota}
  Let $\bU$ be a relational structure that has an $n$-ary universal homogeneous  polymorphism $u$. Let $A\subseteq U$ be finite. Let $f,g$ be $n$-ary  polymorphisms of $\bU$ that agree on $A^n$. Then there exist selfembeddings $\iota_1$ and $\iota_2$, such that
  \begin{enumerate}
  \item $f(x_1,\dots,x_n)=u(\iota_1(x_1),\dots,\iota_1(x_n))$,
  \item $g(x_1,\dots,x_n)=u(\iota_2(x_1),\dots,\iota_2(x_n))$,
  \item $\iota_1\restr_A=\iota_2\restr_A$.
  \end{enumerate}
\end{lemma}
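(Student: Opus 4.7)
The plan is to peel off the two required properties of $u$ in sequence: first use universality to obtain initial witnessing embeddings for $f$ and $g$, then use homogeneity to correct one of them so that it agrees with the other on $A$.

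First, I would apply the universality of $u$ to the polymorphisms $f, g \in \Pol^{(n)}(\bU)$ (regarding $\bU$ itself as an element of $\overline{\Age(\bU)}$). This yields self-embeddings $\iota_1, \iota_2 \colon \bU \injto \bU$ such that
\[
  f(x_1,\dots,x_n) = u(\iota_1(x_1),\dots,\iota_1(x_n)) \quad\text{and}\quad g(x_1,\dots,x_n) = u(\iota_2(x_1),\dots,\iota_2(x_n))
\]
for all $(x_1,\dots,x_n) \in U^n$. These embeddings already satisfy conditions (1) and (2), so the only obstacle is to arrange (3).

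Next, let $\bA$ denote the substructure of $\bU$ induced by the finite set $A$; then $\bA \in \Age(\bU)$. The restrictions $\iota_1\restr_A, \iota_2\restr_A \colon \bA \injto \bU$ are embeddings, and the restriction $f\restr_{A^n} = g\restr_{A^n}$ is a homomorphism $\bA^n \to \bU$. By construction,
\[
  u(\iota_1\restr_A(a_1),\dots,\iota_1\restr_A(a_n)) = f(a_1,\dots,a_n) = u(\iota_2\restr_A(a_1),\dots,\iota_2\restr_A(a_n))
\]
for every $(a_1,\dots,a_n) \in A^n$. This is precisely the hypothesis of the homogeneity of $u$ applied to the structure $\bA$, the homomorphism $f\restr_{A^n}$, and the embedding pair $(\iota_1\restr_A, \iota_2\restr_A)$. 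Hence we obtain an automorphism $h \in \Aut(\bU)$ satisfying $h \circ \iota_1\restr_A = \iota_2\restr_A$ and, for every $(a_1,\dots,a_n) \in U^n$, the invariance $u(h(a_1),\dots,h(a_n)) = u(a_1,\dots,a_n)$.

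Finally, replace $\iota_1$ by $\iota_1' := h \circ \iota_1$, which is still a self-embedding of $\bU$. On $A$ we have $\iota_1'\restr_A = h \circ \iota_1\restr_A = \iota_2\restr_A$, so (3) holds. Moreover, the invariance of $u$ under $h$ gives
\[
  u(\iota_1'(x_1),\dots,\iota_1'(x_n)) = u(h(\iota_1(x_1)),\dots,h(\iota_1(x_n))) = u(\iota_1(x_1),\dots,\iota_1(x_n)) = f(x_1,\dots,x_n),
\]
so (1) is preserved, while (2) for $\iota_2$ is unchanged. The proof is really just an unwinding of the two defining properties of a universal homogeneous polymorphism; there is no genuine technical obstacle beyond keeping track of which definition applies where.
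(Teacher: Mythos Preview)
Your proof is correct and follows essentially the same approach as the paper's own proof: first invoke universality to obtain preliminary embeddings $\iota_1,\iota_2$, then apply homogeneity of $u$ on the finite structure $\bA$ to the restrictions $\iota_1\restr_A,\iota_2\restr_A$ to produce an automorphism $h$, and finally replace $\iota_1$ by $h\circ\iota_1$. The only differences are cosmetic (the paper names the restrictions $\hat\iota_i$ and the corrected embedding $\tilde\iota_1$).
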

\begin{proof}
  Since $u$ is universal, there exist $\iota_1,\iota_2\colon  \bU\injto\bU$, such that for all $(x_1,\dots,x_n)\in U^n$ we have 
  \[
    f(x_1,\dots,x_n)=u(\iota_1(x_1),\dots,\iota_1(x_n)), \text{ and}\quad
    g(x_1,\dots,x_n)=u(\iota_2(x_1),\dots,\iota_2(x_n))
  \]
  Let $\hat\iota_i:=\iota_i\restr_A$, for $i\in\{1,2\}$, and let $\hat{f}:=f\restr_{A^n}$. Let $(a_1,\dots,a_n)\in A^n$. Then we compute
  \begin{equation*}
    \hat{f}(a_1,\dots,a_n) = f(a_1,\dots,a_n)=u(\iota_1(a_1),\dots,\iota_1(a_n))=
    u(\hat\iota_1(a_1),\dots,\hat\iota_1(a_n)).
  \end{equation*}
  Moreover,
  \begin{equation*}
    \hat{f}(a_1,\dots,a_n) = f(a_1,\dots,a_n)=g(a_1,\dots,a_n) =u(\iota_2(a_1),\dots,\iota_2(a_n))=
    u(\hat\iota_2(a_1),\dots,\hat\iota_2(a_n)).
  \end{equation*}
  Since $u$ is homogeneous, there exists an  automorphism $h$ of $\bU$, such that $h\circ\hat\iota_1=\hat\iota_2$, and such that for all $(a_1,\dots,a_n)\in U^n$
  we have $u(h(a_1),\dots, h(a_n))=u(a_1,\dots,a_n)$. 
  Let $\tilde\iota_1:=h\circ\iota_1$. Then $\tilde\iota_1\restr_A=h\circ\hat\iota_1=\hat\iota_2=\iota_2\restr_A$. Moreover, for all $(a_1,\dots,a_n)\in U^n$, we have
  \begin{equation*}
    u(\tilde\iota_1(a_1),\dots,\tilde\iota_1(a_n))=u(h(\iota_1(a_1)),\dots, h(\iota_1(a_n)))=  u(\iota_1(a_1),\dots, \iota_1(a_n))= f(a_1,\dots,a_n).\qedhere
  \end{equation*}
\end{proof}

\begin{proposition}\label{contiota}
  Let $\bU$ be a countably infinite relational structure that has an $n$-ary universal homogeneous polymorphism $u$. Let $(f_j)_{j<\omega}$ be a sequence of $n$-ary polymorphisms of $\bU$ that converge to an $n$-ary polymorphism $f$ of $\bU$. Then there is a sequence $(\iota_j)_{j<\omega}$ of selfembeddings of $\bU$, and a selfembedding $\iota$ of $\bU$, such that
  \begin{enumerate}
  \item for every $j<\omega$ and for all $(x_1,\dots,x_n)\in U^n$ we
    have
    $f_j(x_1 ,\dots,x_n) = u(\iota_j(x_1),\dots,\iota_j(x_n))$,
  \item $(\iota_j)_{j<\omega}$ converges to $\iota$,
  \item for all $(x_1,\dots, x_n)\in U^n$ we have  $f(x_1 ,\dots,x_n) = u(\iota(x_1),\dots,\iota(x_n))$.
  \end{enumerate}
\end{proposition}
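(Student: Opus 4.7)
The plan is to pick a single universality witness $\iota$ for $f$ at the outset, and then, for each $j$, to choose a witness $\iota_j$ for $f_j$ that agrees with $\iota$ on larger and larger finite subsets of $U$. Once this is done, the convergence $\iota_j\to\iota$ is immediate, and clauses (1)--(3) follow.

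First, since $\bU\in\overline{\Age(\bU)}$, universality of $u$ produces an embedding $\iota\colon\bU\injto\bU$ such that $f(x_1,\dots,x_n)=u(\iota(x_1),\dots,\iota(x_n))$ for all $(x_1,\dots,x_n)\in U^n$; this settles clause (3). Enumerate $U=\{a_0,a_1,\dots\}$ and set $A_k:=\{a_0,\dots,a_{k-1}\}$. Since $f_j\to f$ pointwise, for every $k$ there is a $J_k$ with $f_j\restr_{A_k^n}=f\restr_{A_k^n}$ for all $j\ge J_k$; setting $k_j:=\sup\{k : f_j\restr_{A_k^n}=f\restr_{A_k^n}\}$, we obtain $k_j\to\infty$.

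Fix $j$. Universality yields some witness $\iota_j^*\colon\bU\injto\bU$ with $f_j(x_1,\dots,x_n)=u(\iota_j^*(x_1),\dots,\iota_j^*(x_n))$ for all tuples. Let $\bA_{k_j}$ be the substructure of $\bU$ induced by $A_{k_j}$. Then $\iota\restr_{A_{k_j}}$ and $\iota_j^*\restr_{A_{k_j}}$ are embeddings of $\bA_{k_j}$ into $\bU$, and for every $(a_1,\dots,a_n)\in A_{k_j}^n$ we have
\[
u(\iota(a_1),\dots,\iota(a_n))=f(a_1,\dots,a_n)=f_j(a_1,\dots,a_n)=u(\iota_j^*(a_1),\dots,\iota_j^*(a_n)).
\]
Homogeneity of $u$ therefore supplies an automorphism $h_j$ of $\bU$ such that $h_j\circ\iota_j^*\restr_{A_{k_j}}=\iota\restr_{A_{k_j}}$ and $u(h_j(x_1),\dots,h_j(x_n))=u(x_1,\dots,x_n)$ for all tuples. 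Setting $\iota_j:=h_j\circ\iota_j^*$, one reads off that $f_j(x_1,\dots,x_n)=u(\iota_j(x_1),\dots,\iota_j(x_n))$ (clause (1)) and that $\iota_j\restr_{A_{k_j}}=\iota\restr_{A_{k_j}}$. This is, in essence, a direct reuse of the alignment argument behind Lemma~\ref{closeiota}, with $\iota$ playing the role of $\iota_2$.

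For clause (2), observe that for any $a_i$ and any $j$ with $k_j>i$ we have $\iota_j(a_i)=\iota(a_i)$; since $k_j\to\infty$, it follows that $\iota_j\to\iota$ pointwise. The main obstacle is not technical but strategic: one must resist the temptation to define the $\iota_j$ from universality alone and hope they converge. Instead, fixing the target $\iota$ in advance and then exploiting the homogeneity of $u$ to pull each universality witness $\iota_j^*$ into agreement with $\iota$ on an exhausting chain of finite sets is what forces convergence and secures all three conclusions simultaneously.
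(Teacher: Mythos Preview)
Your proof is correct and follows essentially the same route as the paper: fix $\iota$ first via universality, then use the homogeneity of $u$ (which the paper packages as Lemma~\ref{closeiota}) to align each witness for $f_j$ with $\iota$ on an exhausting chain of finite sets. One small technical wrinkle: your $k_j=\sup\{k:f_j\restr_{A_k^n}=f\restr_{A_k^n}\}$ equals $\infty$ whenever $f_j=f$, so $\bA_{k_j}$ is then not finite and the homogeneity clause does not apply; just cap $k_j$ (e.g.\ replace it by $\min(k_j,j)$) or set $\iota_j=\iota$ directly in that case.
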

\begin{proof}
  Since $u$ is universal, there exists a selfembedding $\iota$ of $\bU$ such that for every $(x_1,\dots,x_n)\in U^n$ we have  $f(x_1,\dots,x_n)=u(\iota(x_1),\dots,\iota(x_n))$.

  Suppose that the ultrametric $d_U$ on $\clone{O}_U^{(1)}$ is induced by the enumeration $(u_i)_{i<\omega}$ of $U$, and that $d_U$ on $\clone{O}_U^{(n)}$  is induced by the enumeration $(\tv_i)_{i<\omega}$  of $U^n$. For every finite subset $A$ of $U$ let $m_A$  be the smallest element of $\omega$ such that $A^n\subseteq \{\tv_0,\dots,\tv_{m_A-1}\}$. For every $i<\omega$, let $A_i:=\{u_0,\dots,u_{i-1}\}$.    Then $\bigcup_i A_i^n=U^n$ and thus the sequence $(m_{A_i})_{i<\omega}$ is monotonous and unbounded.  

  Since $(f_j)_{j<\omega}$ converges to $f$, for every $i<\omega$  there exists a $j_i<\omega$ such that for every $k>j_i$ we have that $D_U^{(n)}(f_k,f)>m_{\bA_i}$.  Without loss of generality we may assume that $j_i$ is chosen as small as possible.

  For $0\le k< j_0$, using the fact that $u$ is universal, we choose  $\iota_k$, such that for all $(x_1,\dots,x_n)\in U^n$
  \[f_k(x_1,\dots,x_n)= u(\iota_k(x_1),\dots\iota_k(x_n)).\]

  For $j_i\le k < j_{i+1}$, using Lemma~\ref{closeiota}, we chose $\iota_k$, such that for all $(x_1,\dots,x_n)\in U^n$ 
  \[f_k(x_1,\dots,x_n)= u(\iota_k(x_1),\dots\iota_k(x_n)).\]
  and such that $\iota_k$ agrees with $\iota$ on $A_i$.

  It remains to observe that, the sequence $(\iota_j)_{j<\omega}$ converges to $\iota$. Let $\varepsilon>0$ and let
  \[
  N:=\max(-\lfloor\log_2(\varepsilon)\rfloor,1).
  \]   
  Then, by construction, for all $k\ge j_N$, we have that $\iota_k$ agrees with $\iota$ on $\{u_0,\dots,u_{N-1}\}$---in particular,  $D_U^{(n)}(\iota_k,\iota)\ge N$, and thus $d_U(\iota_k,\iota)\le\varepsilon$.
\end{proof}

\begin{proposition}\label{stronggateexistence}
    If $\bU$ is a relational structure that has a  $k$-ary universal homogeneous polymorphism $u_k$ for every $k\in\bN\setminus\{0\}$, then $\Pol(\bU)$ has a strong gate covering.
\end{proposition}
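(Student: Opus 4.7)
The plan is to exploit the arity-partition of $\Pol(\bU)$ as the open cover, with the universal homogeneous polymorphisms $u_k$ themselves serving as the distinguished gates. Concretely, I set $\cover{U} := \{\Pol^{(k)}(\bU) \mid k \in \bN \setminus \{0\}\}$; each $\Pol^{(k)}(\bU)$ is clopen in $\Pol(\bU)$ because the canonical topology on a clone is the topological sum over arities, so $\cover{U}$ is an open cover. For each $k$, I take $f_{\Pol^{(k)}(\bU)} := u_k$, which obviously lies in $\Pol^{(k)}(\bU)$.

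Fix $k$ and a Cauchy sequence $(g_n)_{n\in\bN}$ in $\Pol^{(k)}(\bU)$. Since $(\clone{O}_U, d_U)$ is complete and $\Pol(\bU)$ is closed in it, $(g_n)$ converges to some $g \in \Pol^{(k)}(\bU)$, so Proposition~\ref{contiota} applies and yields a convergent sequence of selfembeddings $(\iota_n)_{n\in\bN}$ of $\bU$, with limit $\iota$, satisfying
\[ g_n(x_1,\ldots,x_k) = u_k(\iota_n(x_1),\ldots,\iota_n(x_k)) \]
for every $n$. Setting $\kappa_n := \operatorname{id}_U$ for all $n$ and $\iota_n^i := \iota_n$ for each $i \in \{1,\ldots,k\}$ then yields the required factorization
\[ g_n(x_1,\ldots,x_k) = \kappa_n\bigl(u_k(\iota_n^1(x_1),\ldots,\iota_n^k(x_k))\bigr). \]
The constant sequence $(\kappa_n)$ is trivially Cauchy, and each $(\iota_n^i)$ is Cauchy because it coincides with the convergent sequence $(\iota_n)$.

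The one remaining verification is that the $\kappa_n$ and the $\iota_n^i$ lie in $\overline{\group{G}} = \overline{\Aut(\bU)}$, i.e.\ in the closure in $\Pol(\bU)^{(1)} = \End(\bU)$ of the group of units. The identity is an automorphism; and in the homogeneous setting implicit throughout this section, every selfembedding $e$ of $\bU$ is a limit of automorphisms, because for any finite $F \subseteq U$ the partial isomorphism $e\restr_F$ extends to an automorphism by homogeneity, and exhausting $U$ by a growing chain of finite subsets produces an approximating sequence in $\Aut(\bU)$. I do not expect a substantive obstacle at this stage: all the real technical content has been absorbed into Proposition~\ref{contiota} (which in turn rests on Lemma~\ref{closeiota}, where the homogeneity of $u_k$ is exploited to match selfembeddings on ever larger finite sets), so the present proof reduces to repackaging that convergence statement into the precise shape demanded by Definition~\ref{strongGateCovering}.
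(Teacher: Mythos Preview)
Your proof is correct and follows exactly the paper's approach: the paper's own proof is the single sentence that taking $\cover{U}=\{\Pol^{(k)}(\bU)\mid k\in\bN\setminus\{0\}\}$ with $f_{\Pol^{(k)}(\bU)}=u_k$ works as a direct consequence of Proposition~\ref{contiota}. You have simply spelled out the details the paper leaves implicit, including the observation that the selfembeddings produced by Proposition~\ref{contiota} lie in $\overline{\Aut(\bU)}$ by homogeneity of $\bU$---an assumption tacit in the statement but in force throughout the paper.
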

\begin{proof}
    This is a direct consequence of Proposition~\ref{contiota}, taking the set  $\cover{U}=\{\Pol^{(k)}(\bU)\mid k\in\bN\setminus\{0\}\}$ as an open covering of $\Pol(\bU)$, and for $U=\Pol^{(k)}(\bU)$ putting $f_U:=u_k$. 
\end{proof}

\section{Existence of universal homogeneous polymorphisms}
Above, we saw, how the existence of universal homogeneous polymorphisms leads to the existence of strong gate coverings. In this section we derive necessary and sufficient conditions for a relational structure to have universal homogeneous polymorphisms. 
In order to achieve this goal, we will make use of axiomatic \Fraisse theory as it was introduced by Droste and G\"obel in \cite{DroGoe92}. As this theory is not yet in the folklore, we will recall its most important features.

\subsection{Universal homogeneous objects in categories}
\begin{definition}
    Let $\cat{C}$ be a category in which all morphisms are monomorphisms, and let $\cat{C}^*$ be a full subcategory of $\cat{C}$. An object $U$ of $\cat{C}$ is called 
\begin{description}
\item[$\cat{C}$-universal] if for every $A\in\cat{C}$ there is a morphism $f\colon  A\to U$,
\item[$\cat{C}^*$-homogeneous] if for every $A\in\cat{C}^*$ and for all $f,g\colon  A\to U$ there exists an automorphism $h$ of $U$ such that $h\circ f=g$,
\item[$\cat{C}^*$-saturated] if for every $A,B\in\cat{C}^*$ and for all $f\colon  A\to U$, $g\colon  A\to B$ there exists some $h\colon  B\to U$ such that $h\circ g=f$.
\end{description}
\end{definition}

\begin{example}
    Let $\bU$ be a countably infinite relational structure. Consider the category $\cat{C}$ with objects
    \[ \{ f\colon  \bA^n\to\bU\mid \bA\in\overline{\Age(\bU)}\}.\]
    For objects $f\colon  \bA^n\to\bU$ and $g\colon  \bB^n\to \bU$ the morphisms in $\cat{C}$ from $f$ to $g$ are embeddings $\iota\colon  \bA\injto\bB$, with the property that the following diagram commutes: 
      \[
  \begin{tikzcd}
    \bB^n  \rar{g} &\bU \\
    \bA^n. \uar[hook]{\iota^n}\urar{f} 
  \end{tikzcd}
  \]
    In other words, for every $(a_1,\dots,a_n)\in A^n$ we have     $f(a_1,\dots,a_n)= g(\iota(a_1),\dots,\iota(a_n))$. Let $\cat{C}^*$ be the full subcategory of $\cat{C}$ that is spanned by  $\{ f\colon  \bA^n\to\bU\mid \bA\in\Age(\bU)\}$. Now we have that a homomorphism $h\colon  \bU^n\to\bU$ of $\bU$ is $\cat{C}$-universal if and only if $h$ is an $n$-ary universal polymorphism of $\bU$. Moreover, $h$ is an $n$-ary homogeneous polymorphism of $\bU$ if and only if $h$ is $\cat{C}^*$-homogeneous. 
    
    Be aware that $\cat{C}$ may contain a $\cat{C}$-universal, $\cat{C}^*$-homogeneous object $u\colon  \bV^n\to\bU$, but that $\bV$ is not isomorphic to $\bU$. In the sequel it is going to be our task to give conditions on $\cat{C}$ to have universal homogeneous objects and to give conditions, when there is one such object whose domain is equal to $\bU^n$.
\end{example}

\subsection{The Droste-G\"obel Theorem}
\begin{definition}
  Let $\cat{C}$ be a category and let $\lambda$ be an ordinal number. Then $(\lambda, \leq)$ can be considered as a category in the usual way. The functors from $(\lambda, \leq)$ to $\cat{C}$ are called \emph{$\lambda$-chains} of $\cat{C}$.
\end{definition}
\begin{definition}
  Let $\cat{C}$ be a category and let $\lambda$ be a regular cardinal number. An object $A$ of $\cat{C}$ is called \emph{$\lambda$-small} if for every $\lambda$-chain $F\colon  (\lambda, \leq)\to \cat{C}$ with limiting cocone $(S, {(f_i)}_{i<\lambda})$ and for every morphism $h\colon  A\to S$ there exists a $j<\lambda$ and a $g\colon  A\to F(j)$, such that $h=f_j\circ g$.
  \[
  \begin{tikzcd}[column sep=tiny,row sep=large]
    F(0)\arrow{rr}\arrow[bend left=50]{rrrrrrrrrr}[near start]{f_0} &
    & F(1)\rar \arrow[bend left=50]{rrrrrrrr}[near start]{f_1} &\cdots
    \rar& F(j)\arrow[bend left=50]{rrrrrr}[near start]{f_j}
    \arrow{rr}& & F(j+1)\arrow{rr}{}
    \arrow[bend left=50]{rrrr}[near start]{f_{j+1}} & &\cdots & & S\\
    & & & & A\uar[dashed]{g}\arrow[bend right]{rrrrrru}{h}
  \end{tikzcd}
  \]
  The full subcategory of $\cat{C}$, spanned by all $\lambda$-small objects, will be denoted by $\cat{C}_{<\lambda}$.
\end{definition}

\begin{definition}
  A category $\cat{C}$ is called \emph{semi-$\lambda$-algebroidal}, if:
  \begin{enumerate}
  \item all $\mu$-chains ($\mu\le\lambda$) in $\cat{C}_{<\lambda}$ have a    colimit in $\cat{C}$.
  \item every object in $\cat{C}$ is the colimit of a $\lambda$-chain in $\cat{C}_{<\lambda}$.
  \end{enumerate}
  It is called \emph{$\lambda$-algebroidal}, if in addition $\cat{C}_{<\lambda}$ has up to isomorphism at most $\lambda$ objects and between any two objects of $\cat{C}_{<\lambda}$ there are at most $\lambda$ morphisms.
\end{definition}

\begin{example}\label{exmpllambdaalg} Let $\lambda$ be a regular cardinal.
  \begin{enumerate}
  \item The category of sets of cardinality $\le \lambda$ with injective functions is $\lambda$-algebroidal. The $\lambda$-small sets are the sets of cardinality less than $\lambda$.
  \item If $\bA$ is a countably infinite structure, then $(\overline{\Age(\bA)},\injto)$ is an $\omega$-algebroidal category.  The $\omega$-small objects in this category are the elements of $\Age(\bA)$.
  \item Groups (considered as categories with just one object) are $\lambda$-algebroidal.
  \end{enumerate}
\end{example}

\begin{definition}
    Let $\cat{C}$ be a category in which all morphisms are monomorphisms, and let $\cat{C}^*$ be a full subcategory of $\cat{C}$. We say that
\begin{description}
\item[$\cat{C}^*$ has the joint embedding property] if for all $A,B\in\cat{C}^*$ there exists a $C\in\cat{C}^*$ and morphisms $f\colon  A\to C$ and $g\colon  B\to C$,
\item[$\cat{C}^*$ has the amalgamation property] if for all $A$, $B$, $C$ from $\cat{C}^*$ and $f\colon  A\to B$, $g\colon  A\to C$, there exists $D\in\cat{C}^*$ and $\hat{f}\colon  C\to D$, $\hat{g}\colon  B\to D$ such that the following  diagram commutes:
  \[
  \begin{tikzcd}
    C \rar[dashed]{\hat{f}}& D\\
    A  \uar{g}\rar{f}&  B.\uar[dashed]{\hat{g}}
  \end{tikzcd}
  \]
\end{description}
\end{definition}

\begin{lemma}\label{JEPAP}
  Let $\cat{C}$ be a category that has the amalgamation property and that contains a weakly initial object. Then $\cat{C}$ has also the joint embedding property.
\end{lemma}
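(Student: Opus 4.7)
The plan is to use the weakly initial object to produce a common ``source'' for any two objects and then amalgamate over it. Concretely, given objects $A,B\in\cat{C}$, I would first invoke the weakly initial object $I$ to obtain morphisms $\alpha\colon I\to A$ and $\beta\colon I\to B$ (which exist by the definition of weakly initial). This yields a span
\[
A\xleftarrow{\;\alpha\;} I \xrightarrow{\;\beta\;} B.
\]

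Next, I would apply the amalgamation property to this span: there exist an object $D\in\cat{C}$ and morphisms $\hat{\beta}\colon A\to D$ and $\hat{\alpha}\colon B\to D$ with $\hat{\beta}\circ\alpha=\hat{\alpha}\circ\beta$. Taking $C:=D$, $f:=\hat{\beta}$, and $g:=\hat{\alpha}$ gives the desired morphisms $f\colon A\to C$ and $g\colon B\to C$, which is exactly the joint embedding property.

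There is essentially no obstacle here — the lemma is a straightforward one-line deduction from the definitions, and the only minor thing to note is that the commutativity condition guaranteed by amalgamation is not actually needed for \JEP; we only use the existence of the cospan. Hence the proof will be only a couple of lines long.
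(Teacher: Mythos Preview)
Your argument is correct and is exactly the intended one; the paper's own proof simply reads ``This is clear,'' and your proposal spells out precisely the obvious derivation the authors had in mind.
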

\begin{proof}
	This is clear.
\end{proof}

\begin{theorem}[Droste/G\"obel {\cite[Theorem 1.1]{DroGoe92}}]\label{DroGoe}
  Let $\lambda$ be a regular cardinal, and let $\cat{C}$ be a $\lambda$-algebroidal category in which all morphisms are monomorphisms. Then, up to isomorphism, $\cat{C}$ contains at most one $\cat{C}$-universal, $\cat{C}_{<\lambda}$-homogeneous object. Moreover, $\cat{C}$ contains a $\cat{C}$-universal, $\cat{C}_{<\lambda}$-homogeneous object if and only if $\cat{C}_{<\lambda}$ has the joint embedding property and the amalgamation property.
\end{theorem}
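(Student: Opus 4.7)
The plan is to adapt the classical \Fraisse back-and-forth argument to this category-theoretic setting. The hypotheses of $\lambda$-algebroidality supply two crucial tools that replace ``finiteness'' in the classical proof: every object of $\cat{C}$ is a colimit of a $\lambda$-chain of $\lambda$-small objects, and any morphism from a $\lambda$-small object into such a colimit factors through some stage of the chain.

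For \emph{uniqueness}, suppose $U$ and $V$ are both $\cat{C}$-universal and $\cat{C}_{<\lambda}$-homogeneous, and pick $\lambda$-chains $F,G\colon(\lambda,\le)\to\cat{C}_{<\lambda}$ with colimits $U$ and $V$ respectively. I would build by transfinite recursion morphisms $u_i\colon F(i)\to V$ and $v_i\colon G(i)\to U$ commuting with the chain maps. At successor stage $i+1$: pick any morphism $F(i+1)\to V$ by universality, apply $\cat{C}_{<\lambda}$-homogeneity to correct by an automorphism of $V$ so that it extends $u_i$, and proceed symmetrically from the other side using the already-built $v_i$ for an interleaved back-and-forth. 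Limit stages glue using that chains of length $<\lambda$ in $\cat{C}_{<\lambda}$ have colimits in $\cat{C}$. The resulting mutually cofinal cocones induce mutually inverse isomorphisms $U\cong V$.

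For \emph{necessity} of JEP and AP on $\cat{C}_{<\lambda}$: JEP is immediate---given $A,B\in\cat{C}_{<\lambda}$, use universality to produce morphisms $A\to U$ and $B\to U$, then factor both through a common stage of any chain presentation of $U$ by $\lambda$-smallness. For AP, given $f\colon A\to B$ and $g\colon A\to C$ in $\cat{C}_{<\lambda}$, pick $p\colon B\to U$ and $q\colon C\to U$ by universality, apply $\cat{C}_{<\lambda}$-homogeneity to $p\circ f$ and $q\circ g$ to obtain $\varphi\in\Aut(U)$ with $\varphi\circ p\circ f=q\circ g$, and factor the joint pair $(\varphi\circ p,q)$ through a common stage $D\in\cat{C}_{<\lambda}$ using $\lambda$-smallness of $B$ and $C$.

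For \emph{sufficiency}, I would build by transfinite recursion of length $\lambda$ a chain $(U_i)_{i<\lambda}$ in $\cat{C}_{<\lambda}$ dovetailing two families of tasks: (i) each isomorphism type in $\cat{C}_{<\lambda}$ must be absorbed at some stage, handled by JEP; and (ii) each amalgamation request ``given $\alpha\colon A\to U_j$ and $f\colon A\to B$ in $\cat{C}_{<\lambda}$, extend $\alpha$ to some $\beta\colon B\to U_{j'}$ with $j'\ge j$'' must be satisfied at some later stage, handled by AP. The cardinality constraints built into $\lambda$-algebroidality guarantee that the total number of such requests is at most $\lambda$, so all can be addressed by careful bookkeeping. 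The colimit $U:=\operatorname{colim}_{i<\lambda} U_i$ is $\cat{C}$-universal because any $A\in\cat{C}$ is itself a colimit of a $\lambda$-chain of small objects, each of which maps into some $U_i$ by (i), and these maps can be threaded into a cocone using (ii); $\cat{C}_{<\lambda}$-homogeneity then follows by a final back-and-forth argument between two $\lambda$-chain presentations of $U$, exactly parallel to the uniqueness argument. The main obstacle is the bookkeeping in this sufficiency direction: amalgamation requests involve stages $U_j$ that have not yet been constructed when the enumeration is fixed, so one has to interleave tasks---for example via a diagonal enumeration over ordinals $<\lambda$ using regularity of $\lambda$---so that every request arising along the way is eventually resolved.
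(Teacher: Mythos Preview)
The paper does not give its own proof of this theorem: it is quoted verbatim as \cite[Theorem 1.1]{DroGoe92} and used as a black box (notably in the proof of Theorem~\ref{univhompoly}). The uniqueness clause is likewise imported separately via Proposition~\ref{univhomsat} (\cite[Proposition 2.2]{DroGoe92}). So there is nothing in the paper to compare your argument against.

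That said, your sketch is the expected categorical \Fraisse argument and is essentially what Droste and G\"obel do. Two small remarks. First, your successor step in the uniqueness argument (``pick a morphism by universality, correct by an automorphism using homogeneity'') is exactly the passage from homogeneity to saturation recorded in Proposition~\ref{univhomsat}; it is cleaner to invoke saturation directly once and then run the back-and-forth using only extension-along-small-objects. Second, your treatment of limit stages is the one place that needs care: the colimit of $F\restr_\alpha$ for a limit ordinal $\alpha<\lambda$ need not be $\lambda$-small, so you cannot literally ``glue and then apply saturation to the glued object''. The standard fix is that in the interleaved back-and-forth you never need to extend from such a colimit; you only ever extend from the $\lambda$-small objects $F(i)$ and $G(i)$ themselves, and compatibility at limit $\alpha$ is automatic because all morphisms are monos and the colimit cocone of $F$ (resp.\ $G$) is jointly epic. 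With that adjustment your outline is correct.
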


\begin{proposition}[{\cite[Proposition 2.2]{DroGoe92}}]\label{univhomsat}
  Let $\lambda$ be a cardinal and let $\cat{C}$ be a semi-$\lambda$-algebroidal category in which all morphisms are monic. Then for any object $U$ of $\cat{C}$ the following are equivalent:
  \begin{enumerate}[label=(\arabic*)]
  \item $U$ is $\cat{C}$-universal and $\cat{C}_{<\lambda}$-homogeneous,
  \item $U$ is $\cat{C}_{<\lambda}$-universal and $\cat{C}_{<\lambda}$-homogeneous,
  \item $U$ is $\cat{C}_{<\lambda}$-universal and $\cat{C}_{<\lambda}$-saturated.
  \end{enumerate}
  Moreover, any two $\cat{C}$-universal, $\cat{C}_{<\lambda}$-homogeneous objects in $\cat{C}$ are isomorphic. Finally, if $\cat{C}_{<\lambda}$ contains a weakly initial object, then every $\cat{C}_{<\lambda}$-saturated object is $\cat{C}_{<\lambda}$-universal.
\end{proposition}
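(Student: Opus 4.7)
The plan is to prove the three equivalences in the cyclic order $(1)\Rightarrow(2)\Rightarrow(3)\Rightarrow(1)$ and then dispose of the uniqueness clause and the weakly initial addendum separately.

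The step $(1)\Rightarrow(2)$ is immediate, since $\cat{C}_{<\lambda}$ is a full subcategory of $\cat{C}$, so every morphism witnessing $\cat{C}$-universality at a $\lambda$-small object already witnesses $\cat{C}_{<\lambda}$-universality. For $(2)\Rightarrow(3)$, I would fix $A,B\in\cat{C}_{<\lambda}$, $f\colon A\to U$, and $g\colon A\to B$, first use $\cat{C}_{<\lambda}$-universality to obtain some $h_{0}\colon B\to U$, and then apply $\cat{C}_{<\lambda}$-homogeneity to the two parallel morphisms $f$ and $h_{0}\circ g$ from $A$ to $U$ to produce an automorphism $\alpha$ of $U$ with $\alpha\circ(h_{0}\circ g)=f$. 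The composite $\alpha\circ h_{0}$ is then the desired $h\colon B\to U$ with $h\circ g=f$.

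The substantive direction is $(3)\Rightarrow(1)$. For $\cat{C}$-universality of $U$, I would take any $A\in\cat{C}$, invoke semi-$\lambda$-algebroidality to present $A$ as the colimit of a $\lambda$-chain $(A_{i})_{i<\lambda}$ in $\cat{C}_{<\lambda}$, and build a compatible cocone $(f_{i}\colon A_{i}\to U)_{i<\lambda}$ by transfinite induction. The base case uses $\cat{C}_{<\lambda}$-universality; at successor ordinals $\cat{C}_{<\lambda}$-saturation applied to the transition morphism $A_{i}\to A_{i+1}$ produces $f_{i+1}$ with $f_{i+1}\circ a_{i,i+1}=f_{i}$, and compatibility with all earlier $f_{j}$ then follows formally from composition. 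At limit ordinals the morphism $f_{\alpha}$ is forced by the universal property of the colimit of the initial segment $(A_{i})_{i<\alpha}$, provided the chain is arranged so that $A_{\alpha}$ realises this colimit. The universal property of the total colimit $A$ finally amalgamates the $f_{i}$'s into a morphism $A\to U$. For $\cat{C}_{<\lambda}$-homogeneity of $U$, given $f,g\colon A\to U$ with $A\in\cat{C}_{<\lambda}$, I would express $U$ itself as the colimit of a $\lambda$-chain in $\cat{C}_{<\lambda}$ and carry out a standard \Fraisse-style back-and-forth: at each stage one extends alternately the partial ``forth'' and ``back'' maps between two copies of $U$ using $\cat{C}_{<\lambda}$-saturation, ensuring that a chosen cofinal family of $\lambda$-small subobjects on either side is eventually covered. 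Passing to the colimit of this partial system yields the desired automorphism of $U$ transporting $f$ to $g$.

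Uniqueness of a $\cat{C}$-universal, $\cat{C}_{<\lambda}$-homogeneous object follows from essentially the same back-and-forth, run between two candidates $U$ and $U'$, using that each of them is $\cat{C}_{<\lambda}$-saturated by the already established $(1)\Rightarrow(3)$. For the weakly initial addendum, let $I$ be weakly initial in $\cat{C}_{<\lambda}$ and $U$ be $\cat{C}_{<\lambda}$-saturated. Writing $U$ as the colimit of some $\lambda$-chain $(U_{j})_{j<\lambda}$ of $\lambda$-small objects, a weak initiality map $I\to U_{0}$ composed with the coprojection yields a morphism $f\colon I\to U$. For arbitrary $A\in\cat{C}_{<\lambda}$, a weakly initial map $g\colon I\to A$ combined with $\cat{C}_{<\lambda}$-saturation applied to $(f,g)$ produces the required morphism $A\to U$, establishing $\cat{C}_{<\lambda}$-universality. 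The main obstacle I anticipate is the limit step in the transfinite induction for $\cat{C}$-universality: the colimit of a proper initial segment of the chain need not be $\lambda$-small, so saturation does not directly extend the partial cocone through the canonical map into $A_{\alpha}$. The natural remedy is to replace the given chain by a \emph{smooth} one (continuous at every limit), where at each limit $A_{\alpha}$ literally is the colimit of the preceding segment; verifying that such a refinement exists, still consists of $\lambda$-small objects, and still has colimit $A$ is the principal technical delicacy, matched only by the careful bookkeeping required to drive the back-and-forth in the homogeneity argument to an actual automorphism rather than a proper endomorphism.
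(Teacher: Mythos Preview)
The paper does not supply its own proof of this proposition: it is stated with a citation to \cite[Proposition 2.2]{DroGoe92} and left without a proof environment. There is therefore nothing in the paper to compare your argument against directly.

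That said, your sketch is the standard one and is essentially what appears in Droste--G\"obel. The implications $(1)\Rightarrow(2)\Rightarrow(3)$ are handled correctly; your treatment of $(3)\Rightarrow(1)$ via a transfinite cocone construction for universality and a back-and-forth for homogeneity is the expected route, and you have correctly isolated the one genuine technical point, namely that the $\lambda$-chain presenting an arbitrary object must be taken to be smooth (continuous at limits) so that the inductive construction passes through limit stages. Semi-$\lambda$-algebroidality guarantees such a smooth presentation, since colimits of shorter chains of $\lambda$-small objects exist and are again $\lambda$-small (this is where closure of $\cat{C}_{<\lambda}$ under colimits of $\mu$-chains for $\mu<\lambda$ is used). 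Your handling of the weakly initial addendum is also correct: the detour through a $\lambda$-small stage $U_0$ of a presentation of $U$ is exactly how one secures a morphism $I\to U$ when $U$ itself need not be $\lambda$-small.
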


\subsection{Universal homogeneous objects in comma categories}

\begin{definition}
  Let $\cat{A}$,$\cat{B}$,$\cat{C}$ be categories, let $F\colon  \cat{A}\to\cat{C}$, $G\colon  \cat{B}\to\cat{C}$ be functors. The comma category $(F \komma G)$ has as objects triples $(A,f,B)$ where $A\in\cat{A}$, $B\in\cat{B}$, $f\colon  FA\to GB$. The morphisms from $(A,f,B)$ to $(A',f',B')$ are pairs $(a,b)$ such that $a\colon  A\to A'$ in $\cat{A}$, and  $b\colon  B\to B'$ in $\cat{B}$,  such that the following diagram commutes:
  \[
  \begin{tikzcd}
    FA'\rar{f'}& GB'\\
    FA \uar{Fa}\rar{f}& GB.\uar{Gb}
  \end{tikzcd}
  \]
\end{definition}

\begin{definition}
  Let $\cat{A}$, $\cat{B}$, $\cat{C}$ be categories, $F\colon  \cat{A}\to\cat{C}$, $G\colon  \cat{B}\to\cat{C}$ be functors. We say that $(F,G)$ has property
  \begin{enumerate}[label=\textbf{(P\arabic*)},ref=(P\arabic*)]
  \item\label{P1} if $\cat{A}$ and $\cat{B}$ are $\lambda$-algebroidal,
  \item\label{P2} if all morphisms of $\cat{A}$ and $\cat{B}$ are monomorphisms,
  \item\label{P3} if $F$ preserves colimits of $\lambda$-chains,
  \item\label{P4} if $\forall\mu<\lambda$: $F$ preserves colimits of $\mu$-chains of $\lambda$-small objects in $\cat{A}$,
  \item\label{P5} if $G$ preserves colimits of $\lambda$-chains of $\lambda$-small objects in $\cat{B}$,
  \item\label{P6} if $G$ preserves monomorphisms,
  \item\label{P7} if whenever $H$ is a $\lambda$-chain in $\cat{B}$ with limiting cocone $(B,(g_i)_{i<\lambda})$, and $A\in\cat{A}_{<\lambda}$, then for every $f\colon  FA\to GB$ there exists a $j<\lambda$ and an $h\colon  FA\to GH(j)$, such that $Gg_j\circ h=f$.
    \[
    \begin{tikzcd}[column sep=tiny,row sep=large]
      GH(0)\arrow{rr}\arrow[bend left=50]{rrrrrrrrrr}[near
      start]{Gg_0} & & GH(1)\rar \arrow[bend left=50]{rrrrrrrr}[near
      start]{Gg_1} &\dots \rar& GH(j)\arrow[bend left=50]{rrrrrr}[near
      start]{Gg_j} \arrow{rr}& & GH(j+1)\arrow{rr}{}
      \arrow[bend left=50]{rrrr}[near start]{Gg_{j+1}} & &\dots & & GB\\
      & & & & FA\uar[dashed]{h}\arrow[bend right]{rrrrrru}{f}
    \end{tikzcd}
    \]
  \item\label{P8} if for all $A\in \cat{A}_{<\lambda}$, $B\in\cat{B}_{<\lambda}$ there are at most $\lambda$ morphisms between $FA$ and $GB$ in $\cat{C}$.
  \end{enumerate}
\end{definition}

\begin{proposition}[{\cite[Propositions 2.15, 2.16]{PecPec13a}}]\label{lambdaalg}
  Let $\cat{A}$, $\cat{B}$, $\cat{C}$ be categories and let $F\colon  \cat{A}\to\cat{C}$, $G\colon  \cat{B}\to\cat{C}$ be functors. If $(F,G)$ has properties \ref{P1}--\ref{P7}, then $(F\komma G)$ is semi-$\lambda$-algebroidal. In this case, an object $(A,a,B)$ of $(F\komma G)$ is $\lambda$-small if and only if $A\in\cat{A}_{<\lambda}$ and $B\in \cat{B}_{<\lambda}$. If in addition $(F,G)$ has property \ref{P8}, then $(F\komma G)$ is $\lambda$-algebroidal.
\end{proposition}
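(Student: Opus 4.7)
The plan is to prove three claims in sequence: first, a characterization of the $\lambda$-small objects of $(F\komma G)$ as exactly those triples $(A,a,B)$ with $A\in\cat{A}_{<\lambda}$ and $B\in\cat{B}_{<\lambda}$; second, the existence of colimits of $\mu$-chains of such objects for every $\mu\le\lambda$, constructed coordinatewise; and third, the approximation of every object of $(F\komma G)$ by a $\lambda$-chain of $\lambda$-small objects. Together these give semi-$\lambda$-algebroidality, and a short counting argument using \ref{P8} upgrades this to $\lambda$-algebroidality.

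I would begin with the characterization of $\lambda$-smallness. For the easier implication, assume $A\in\cat{A}_{<\lambda}$ and $B\in\cat{B}_{<\lambda}$. Given a $\lambda$-chain in $(F\komma G)$ with limiting cocone $((S_1,s,S_2),(\sigma_i,\tau_i)_{i<\lambda})$, the coordinatewise colimit construction (which will be established in the next step) shows the component cocones are limiting in $\cat{A}$ and $\cat{B}$. A morphism $(A,a,B)\to(S_1,s,S_2)$ then splits into two components which, by the smallness of $A$ and $B$, factor through some stages $i$ and $j$; passing to $k=\max(i,j)$ and composing with chain morphisms gives a pair that assembles into a comma-morphism factorization (the required square commutes since it commutes after post-composition with the monomorphic colimit leg $G\tau_k$, using \ref{P2} and \ref{P6}). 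For the converse, if $A$ is not $\lambda$-small, fix a $\lambda$-chain $(A_i)_{i<\lambda}$ in $\cat{A}_{<\lambda}$ with colimit $A$; by \ref{P3}, pre-composing $a$ with the colimit injections $F\alpha_i$ yields a $\lambda$-chain $(A_i, a\circ F\alpha_i, B)$ in $(F\komma G)$ with colimit $(A,a,B)$, and the non-factorization of $\operatorname{id}_A$ through any $A_i$ prevents the identity on $(A,a,B)$ from factoring. An analogous chain with $A$ held constant handles the second coordinate.

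For the colimit construction, given a $\mu$-chain $(A_i, a_i, B_i)_{i<\mu}$ with connecting data $(\alpha_{ij},\beta_{ij})$, I form $A = \operatorname{colim} A_i$ and $B = \operatorname{colim} B_i$ in $\cat{A}$ and $\cat{B}$ (available by \ref{P1}). Properties \ref{P3} (if $\mu=\lambda$) and \ref{P4} (if $\mu<\lambda$) imply $FA=\operatorname{colim} FA_i$, while the cocone $(G\beta_i\circ a_i)_i$ on $GB$ (compatible by the comma-squares) induces a unique $a\colon FA\to GB$; the triple $(A,a,B)$ with evident structure maps is the colimit. For the approximation step, given a general $(A,a,B)$, fix $\lambda$-chains $A=\operatorname{colim}_i A_i$ and $B=\operatorname{colim}_j B_j$ in $\lambda$-small objects. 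By \ref{P3}, $FA=\operatorname{colim}_i FA_i$, and \ref{P7} factors each composite $FA_i\to FA\xrightarrow{a} GB$ through some $GB_{j(i)}$. The main obstacle will be to promote these unrelated local factorizations into a single $\lambda$-chain of $\lambda$-small comma objects with colimit $(A,a,B)$: I would construct by transfinite recursion a strictly monotone cofinal function $i\mapsto j(i)$ together with coherent factorizations $\tilde{a}_i\colon FA_i\to GB_{j(i)}$, using \ref{P7} at successors to reconcile the two chains and invoking regularity of $\lambda$ to keep all required suprema below $\lambda$. The resulting diagonal chain $(A_i,\tilde{a}_i, B_{j(i)})_{i<\lambda}$ has colimit $(A,a,B)$ by the preceding step.

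The $\lambda$-algebroidality enhancement is then immediate: by the characterization, isomorphism classes of objects of $(F\komma G)_{<\lambda}$ correspond to triples $(A,a,B)$ with $A\in\cat{A}_{<\lambda}$, $B\in\cat{B}_{<\lambda}$, $a\in\cat{C}(FA,GB)$; each coordinate ranges over a set of size at most $\lambda$ (the first two by $\lambda$-algebroidality of $\cat{A}$ and $\cat{B}$, the third by \ref{P8}), so there are at most $\lambda$ objects up to isomorphism, and morphisms between any two such objects are pairs from two sets of size at most $\lambda$, hence at most $\lambda$ in number.
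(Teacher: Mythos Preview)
The paper does not supply its own proof of this proposition; it is quoted from \cite{PecPec13a} (their Propositions~2.15 and~2.16). Your plan follows the natural route---coordinatewise colimits, a diagonal approximation, identification of the $\lambda$-small objects, and a count---and most of it is sound. In particular, Part~2 and the counting step are correct, and Part~3 is slightly easier than you indicate: once $j(\cdot)$ is chosen monotone, the coherence $\tilde a_{i'}\circ F\alpha_{ii'}=G\beta_{j(i)j(i')}\circ\tilde a_i$ is automatic, since both sides become equal after post-composition with the monomorphism $G\beta_{j(i')}$ (using \ref{P2} and \ref{P6}); no reconciliation at successor stages is needed beyond ensuring cofinality of $j$.

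There is, however, a real gap in the forward implication of the $\lambda$-smallness characterization. You assert that for an \emph{arbitrary} $\lambda$-chain in $(F\komma G)$ with limiting cocone $((S_1,s,S_2),(\sigma_i,\tau_i))$, ``the coordinatewise colimit construction \dots\ shows the component cocones are limiting in $\cat{A}$ and $\cat{B}$.'' But your Part~2 only builds colimits of chains whose components are $\lambda$-small---that is precisely where \ref{P1} supplies $\operatorname{colim}A_i$ and $\operatorname{colim}B_i$. The definition of $\lambda$-small quantifies over \emph{all} $\lambda$-chains that have a colimit, and nothing you have written rules out a chain of large objects whose colimit in $(F\komma G)$ exists without $\operatorname{colim}A_i$ existing in $\cat{A}$. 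To close the gap you need an auxiliary lemma: in a semi-$\lambda$-algebroidal category with all morphisms mono, every $\lambda$-chain has a colimit (proved by approximating each term by small objects and diagonalizing, using regularity of $\lambda$). With that in hand, \ref{P3} lets your Part~2 construction run on the arbitrary chain, uniqueness of colimits forces $S_1\cong\operatorname{colim}A_i$ and $S_2\cong\operatorname{colim}B_i$, and your factorization argument then goes through. A smaller point: the ``analogous chain with $A$ held constant'' in your converse for the $B$-coordinate is not quite symmetric---to even \emph{define} that chain you must first factor $a\colon FA\to GB$ through some $GB_{j_0}$, which requires \ref{P7} and hence $A\in\cat{A}_{<\lambda}$ (which you may assume, having already treated the other case).
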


\begin{lemma}\label{p567}
  Let $F\colon  \cat{A}\to\cat{C}$, $G\colon  \cat{B}\to \cat{C}$ be functors such that $\cat{B}$ consists just of one object and such that all morphisms of $\cat{B}$ are isomorphisms. Then $(F,G)$ has properties \ref{P5}, \ref{P6}, and \ref{P7}.
\end{lemma}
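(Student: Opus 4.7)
The plan is to exploit the fact that a one-object category $\cat{B}$ whose morphisms are all isomorphisms is a group (viewed as a groupoid), so that every morphism in $\cat{B}$ is invertible and $G$ automatically sends it to an isomorphism in $\cat{C}$. Denote the unique object of $\cat{B}$ by $B_0$. Then any $\lambda$-chain $H\colon(\lambda,\le)\to\cat{B}$ is constant on objects (sending every $i$ to $B_0$) and sends every relation $i\le j$ to an isomorphism $H(i,j)\colon B_0\to B_0$.

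Property \ref{P6} is immediate: isomorphisms are monomorphisms, and functors preserve isomorphisms; hence $G$ sends each morphism of $\cat{B}$ to an iso in $\cat{C}$, which is in particular monic.

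For \ref{P5} I would first verify that $(B_0,(H(0,i)^{-1})_{i<\lambda})$ is a colimit cocone of $H$ in $\cat{B}$. The cocone compatibility $H(0,j)^{-1}\circ H(i,j)=H(0,i)^{-1}$ is trivial, and given any cocone $(X,(f_i)_{i<\lambda})$ in $\cat{B}$, the cocone condition $f_j\circ H(0,j)=f_0$ forces the mediating morphism to be $f_0$, which then works. Since colimits are unique up to unique isomorphism, any other colimit cocone $(B,(g_i)_{i<\lambda})$ of $H$ consists of isomorphisms $g_i\colon B_0\to B$. Applying $G$ to the cocone $(B_0,(H(0,i)^{-1})_{i<\lambda})$ yields $(GB_0,(GH(0,i)^{-1})_{i<\lambda})$ in $\cat{C}$; the very same formal argument (using only that the $GH(i,j)$ are isomorphisms in $\cat{C}$) shows this is a colimit of $GH$ in $\cat{C}$. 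Thus $G$ preserves the colimit, establishing \ref{P5}.

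For \ref{P7}, let $H$ be a $\lambda$-chain in $\cat{B}$ with colimit cocone $(B,(g_i)_{i<\lambda})$. By the preceding paragraph every $g_i$ is an isomorphism, and hence so is $Gg_0\colon GB_0\to GB$. Given $A\in\cat{A}_{<\lambda}$ and any $f\colon FA\to GB$, simply take $j:=0$ and define $h:=(Gg_0)^{-1}\circ f\colon FA\to GH(0)$; then $Gg_0\circ h=f$ as required. No smallness or chain-cofinality argument is needed because one can always factor through the $0$-th stage.

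I do not expect any of the three parts to be a real obstacle; the entire content of the lemma is the trivial observation that a groupoid has no nontrivial cofinality behaviour, and that functors preserve isomorphisms. The only point that requires a brief verification is identifying the colimit of a $\lambda$-chain in a one-object groupoid, which is a short direct computation as above.
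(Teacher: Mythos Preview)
Your proposal is correct and follows essentially the same approach as the paper. The only cosmetic difference is in \ref{P5}: you first exhibit the specific colimit cocone $(B_0,(H(0,i)^{-1})_i)$ in $\cat{B}$ and then argue that its $G$-image is again a colimit, whereas the paper starts from an arbitrary limiting cocone $(B,(g_i)_i)$ and directly verifies the universal property of $(GB,(Gg_i)_i)$ by writing down the unique mediating morphism $k:=c_0\circ Gg_0^{-1}$; both arguments rest on the same observation that each $g_i$ is invertible.
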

\begin{proof}
  \textbf{About~\ref{P6}:} In categories, every isomorphism is a monomorphism, and every functor preserves isomorphisms. Hence, since every morphism of $\cat{B}$ is an isomorphism, $G$ preserves monomorphisms.
  
  \textbf{About~\ref{P7}:}  
  Let $H\colon  (\lambda,\le)\to\cat{B}$ be a $\lambda$-chain with limiting cocone $(B,(g_i)_{i<\lambda})$ and let $A\in\cat{A}_{<\lambda}$. Moreover, let $f\colon  FA\to GB$. For an arbitrary $j<\lambda$ define $h=Gg_j^{-1}\circ f$. Then we have $Gg_j\circ h=f$. 
  
  \textbf{About~\ref{P5}:} Let $H\colon  (\lambda,\le)\to\cat{B}$ be a $\lambda$-chain with limiting cocone $(B,(g_i)_{i<\lambda})$ and let $(C, (c_i)_{i<\lambda})$ be a compatible cocone of $G\circ H$. Any mediating morphism $k\colon  GB\to C$ between $(GB,(Gg_i)_{i<\lambda})$ and $(C, (c_i)_{i<\lambda})$ has to fulfill the identities $k\circ Gg_j = c_j$ for all $j\in\lambda$. It follows that the only possibility to define $k$ is  $k:=c_0\circ Gg_0^{-1}$. With this choice we compute
  \begin{align*}
    k\circ Gg_j&=c_0\circ Gg_0^{-1}\circ Gg_j=c_0\circ(Gg_j\circ GH(0,j))^{-1}\circ Gg_j=c_0\circ GH(0,j)^{-1}\circ Gg_j^{-1}\circ Gg_j\\
    &=c_0\circ GH(0,j)^{-1}=c_j\circ GH(0,j)\circ GH(0,j)^{-1}=c_j.
  \end{align*}
  Thus, $(GB,(Gg_i)_{i<\lambda})$ is a limiting cocone of $G\circ H$. 
\end{proof}

\begin{definition}
  Let $\cat{A}$, $\cat{B}$, $\cat{C}$ be categories, $F\colon  \cat{A}\to\cat{C}$, $G\colon  \cat{B}\to\cat{C}$ be functors. We say that $(F,G)$ has property 
  \begin{enumerate}[label=\textbf{(P\arabic*)},ref=(P\arabic*),start=9]
  \item\label{P9} if for all $(B_1,h_1,T), (B_2,h_2,T)\in (F\komma G)_{<\lambda}$ there exists a $(C,h,T')\in(F\komma G)_{<\lambda}$ and morphisms $(f_1,g_1)\colon  (B_1,h_1,T)\to (C,h,T')$, $(f_2,g_2)\colon  (B_2,h_2,T)\to (C,h,T')$ such that the following diagram commutes:
    \[
    \begin{tikzcd}
      GT \rar[dashed]{Gg_1}& GT' & GT\lar[dashed][swap]{Gg_2}\\
      FB_1 \rar[dashed]{Ff_1}\uar{h_1}& FC\uar{h} & FB_2. \lar[dashed][swap]{Ff_2}\uar[swap]{h_2}
    \end{tikzcd}
    \]
  \item\label{P10} if for all $A,B_1,B_2\in\cat{A}_{<\lambda}$, $f_1\colon  A\to B_1$, $f_2\colon  A\to B_2$, $T\in\cat{B}_{<\lambda}$, $h_1\colon  FB_1\to GT$, $h_2\colon  FB_2\to GT$ with $h_1\circ Ff_1=h_2\circ Ff_2$ there exist $C\in\cat{A}_{<\lambda}$, $T'\in\cat{B}_{<\lambda}$, $g_1\colon  B_1\to C$, $g_2\colon  B_2\to C$, $h\colon  FC\to GT'$, $k\colon  T\to T'$ such that the following diagrams commute:
    \[
    \begin{tikzcd}
      &   & & & &GT\rar[dashed][swap]{Gk} & GT'\\
      B_1 \rar[dashed]{g_1}& C & & FB_1 \arrow[bend left]{urr}{h_1}
      \rar[dashed]{Fg_1}
      & FC\arrow[bend right,dashed]{urr}[swap,near end]{h}\\
      A \uar{f_1}\rar{f_2}& B_2\uar[dashed][swap]{g_2} & & FA
      \uar{Ff_1}\rar{Ff_2} & FB_2. \arrow[bend right]{uur}[swap]{h_2}
      \uar[dashed][swap]{Fg_2}
    \end{tikzcd}
    \]
  \item\label{P11} if for all $A,B\in\cat{A}_{<\lambda}$, $T_1\in\cat{B}_{<\lambda}$, $g\colon  A\to B$, $a\colon  FA\to GT_1$ there exist $T_2\in\cat{B}_{<\lambda}$, $h\colon  T_1\to T_2$, $b\colon  FB\to GT_2$ such that the following diagram commutes:
    \[
    \begin{tikzcd}
      FB \rar[dashed]{b}& GT_2\\
      FA\rar{a}\uar{Fg} & GT_1.\uar[swap,dashed]{Gh} 
    \end{tikzcd}
    \]
  \end{enumerate}
\end{definition}

\begin{proposition}[{\cite[Theorem 2.20]{PecPec13a}}]\label{homkomma}
  Let $F\colon  \cat{A}\to\cat{C}$, $G\colon  \cat{B}\to\cat{C}$ be functors. Suppose that $(F,G)$ fulfills conditions \ref{P1}--\ref{P8}. Then the following are true:
  \begin{enumerate}
  \item If $\cat{B}_{<\lambda}$ has the \JEP, then $(F\komma G)_{<\lambda}$ has the \JEP if and only if $(F,G)$ has property \ref{P9}.
  \item If $\cat{B}_{<\lambda}$ has the \AP, then $(F\komma G)_{<\lambda}$ has the \AP if and only if $(F,G)$ has property \ref{P10}.
  \end{enumerate}
\end{proposition}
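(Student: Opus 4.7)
The plan is to prove both equivalences by reducing general joint embedding / amalgamation problems in $(F\komma G)_{<\lambda}$ to the restricted form demanded by \ref{P9} and \ref{P10}, using the \JEP (respectively \AP) assumed on $\cat{B}_{<\lambda}$ to first merge the $\cat{B}$-coordinates of the objects involved. The converse directions are near-tautological: \ref{P9} and \ref{P10} describe precisely those instances of \JEP / \AP in $(F\komma G)_{<\lambda}$ in which the base objects share their $\cat{B}$-coordinate via identity morphisms.

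For part (1), given objects $(B_1,h_1,T_1),(B_2,h_2,T_2)\in(F\komma G)_{<\lambda}$, I would use the \JEP of $\cat{B}_{<\lambda}$ to obtain $T\in\cat{B}_{<\lambda}$ together with morphisms $k_i\colon T_i\to T$. The triples $(B_i, Gk_i\circ h_i, T)$ are then objects of $(F\komma G)_{<\lambda}$ admitting the obvious morphisms $(\operatorname{id}_{B_i},k_i)\colon (B_i,h_i,T_i)\to (B_i, Gk_i\circ h_i, T)$. Applying \ref{P9} to these two $T$-aligned objects yields a joint extension $(C,h,T')$, and composition with the morphisms $(\operatorname{id}_{B_i},k_i)$ produces the required \JEP-witness for the original pair. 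Conversely, if $(F\komma G)_{<\lambda}$ has \JEP, simply feed it two objects $(B_1,h_1,T),(B_2,h_2,T)$ sharing the same second coordinate; the amalgam is exactly the data demanded by \ref{P9}.

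For part (2), take a span $(A_0,a_0,T_0)\xrightarrow{(f_i,g_i)}(A_i,a_i,T_i)$ ($i=1,2$) in $(F\komma G)_{<\lambda}$. I would first amalgamate $g_1,g_2$ in $\cat{B}_{<\lambda}$ to obtain $T'\in\cat{B}_{<\lambda}$ and morphisms $k_i\colon T_i\to T'$ with $k_1\circ g_1=k_2\circ g_2$. Setting $h_i:=Gk_i\circ a_i$, the commutativity of the comma morphisms $(f_i,g_i)$ and functoriality of $G$ give $h_1\circ Ff_1 = G(k_1\circ g_1)\circ a_0 = G(k_2\circ g_2)\circ a_0 = h_2\circ Ff_2$, which is precisely the hypothesis of \ref{P10}. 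Applying \ref{P10} yields $C\in\cat{A}_{<\lambda}$, $T''\in\cat{B}_{<\lambda}$, $g_i^\ast\colon A_i\to C$, $k\colon T'\to T''$ and $h\colon FC\to GT''$; then a short diagram chase using $h\circ Fg_i^\ast = Gk\circ h_i = G(k\circ k_i)\circ a_i$ shows that the pairs $(g_i^\ast, k\circ k_i)$ are well-defined morphisms $(A_i,a_i,T_i)\to(C,h,T'')$ in the comma category, and that their precompositions with $(f_i,g_i)$ agree (using $g_1^\ast\circ f_1=g_2^\ast\circ f_2$ from \ref{P10} and $k_1\circ g_1=k_2\circ g_2$ from the $\cat{B}$-amalgamation). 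For the converse, assume $(F\komma G)_{<\lambda}$ has \AP and apply it to the span $(A,a,T)\xrightarrow{(f_i,\operatorname{id}_T)}(B_i,h_i,T)$, where $a:=h_1\circ Ff_1=h_2\circ Ff_2$; the apex of the resulting amalgamation square supplies precisely the data demanded by \ref{P10}.

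The main subtlety throughout is the bookkeeping of the commutativity squares that define morphisms in a comma category: every newly constructed pair $(a,b)$ must be checked to satisfy $f'\circ Fa=Gb\circ f$, and one must likewise verify that the composites required to witness \JEP / \AP land in $(F\komma G)_{<\lambda}$. These verifications are short, but they are the only places where the functoriality of $F$ and $G$ really enters; the smallness of all objects produced is guaranteed by \ref{P1} together with the fact that $\cat{A}_{<\lambda}$ and $\cat{B}_{<\lambda}$ are closed under the constructions used.
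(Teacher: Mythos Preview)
The paper does not contain its own proof of this proposition: it is quoted verbatim from \cite[Theorem~2.20]{PecPec13a}, so there is no in-paper argument to compare against. That said, your proposal is correct and is the natural proof one would expect for such a statement. The key idea---using the \JEP\ (respectively \AP) of $\cat{B}_{<\lambda}$ to first align the $\cat{B}$-coordinates before invoking \ref{P9} or \ref{P10}---is exactly right, and your diagram checks for the comma-category morphisms and for the equality of the two composites in the amalgamation case are complete. The converse directions are, as you say, immediate specializations.
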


\begin{proposition}[{\cite[Proposition 2.24]{PecPec13a}}]\label{firstobjsat}
  Let $F\colon  \cat{A}\to\cat{C}$, $G\colon  \cat{B}\to\cat{C}$ be functors such that $(F,G)$ fulfills conditions \ref{P1}--\ref{P8}. Additionally, suppose that $F$ is faithful, and that $(F\komma G)_{<\lambda}$ has the \JEP and the \AP. Let $(U,u,T)$ be an $(F\komma G)$-universal, $(F\komma G)_{<\lambda}$-homogeneous object in $(F\komma G)$.  Then $U$ is $\cat{A}_{<\lambda}$-saturated if and only if $(F,G)$ fulfills condition \ref{P11}.
\end{proposition}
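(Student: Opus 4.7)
The plan is to derive both implications by leveraging two observations. First, by Proposition~\ref{univhomsat}, since $(U,u,T)$ is $(F\komma G)$-universal and $(F\komma G)_{<\lambda}$-homogeneous, it is automatically $(F\komma G)_{<\lambda}$-saturated, and I will exploit this upgraded form of homogeneity. Second, property~\ref{P7} together with the fact that $\cat{B}$ is $\lambda$-algebroidal (by~\ref{P1}) lets me write $T$ as the colimit of a $\lambda$-chain $H$ of $\lambda$-small objects with cocone $(\sigma_i)_{i<\lambda}$, and to factor every morphism $FA\to GT$ with $A\in\cat{A}_{<\lambda}$ through some $GH(i)$.

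For the ``if'' direction (\ref{P11} $\Rightarrow$ $\cat{A}_{<\lambda}$-saturation): given $A,B\in\cat{A}_{<\lambda}$, $f\colon A\to U$ and $g\colon A\to B$, I would first apply \ref{P7} to $u\circ Ff\colon FA\to GT$ to obtain $T_1\in\cat{B}_{<\lambda}$, $t_1\colon T_1\to T$ and $a\colon FA\to GT_1$ with $Gt_1\circ a=u\circ Ff$, so that $(f,t_1)\colon(A,a,T_1)\to(U,u,T)$ is a morphism in $(F\komma G)$. Feeding the data $(A,B,T_1,g,a)$ into \ref{P11} then produces $T_2\in\cat{B}_{<\lambda}$, $t_{12}\colon T_1\to T_2$ and $b\colon FB\to GT_2$, which assemble into a morphism $(g,t_{12})\colon(A,a,T_1)\to(B,b,T_2)$ in $(F\komma G)_{<\lambda}$. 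Finally, $(F\komma G)_{<\lambda}$-saturation of $(U,u,T)$ amalgamates these two outgoing morphisms into some $(h,t)\colon(B,b,T_2)\to(U,u,T)$, and $h\colon B\to U$ is the desired extension $h\circ g=f$.

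For the converse (saturation $\Rightarrow$ \ref{P11}), starting from \ref{P11}-data $A,B\in\cat{A}_{<\lambda}$, $T_1\in\cat{B}_{<\lambda}$, $g\colon A\to B$, $a\colon FA\to GT_1$, I would use $(F\komma G)$-universality to pick a morphism $(a',t_1')\colon(A,a,T_1)\to(U,u,T)$, and $\cat{A}_{<\lambda}$-saturation of $U$ to extend $a'$ along $g$ to $b'\colon B\to U$ with $b'\circ g=a'$. Now \ref{P7} applied to $u\circ Fb'\colon FB\to GT$ yields $i_1<\lambda$ and $h_b\colon FB\to GH(i_1)$ with $G\sigma_{i_1}\circ h_b=u\circ Fb'$, while the $\lambda$-smallness of $T_1$ factors $t_1'$ as $\sigma_{i_2}\circ\tilde t_1$ for some $i_2<\lambda$. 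Setting $i:=\max(i_1,i_2)$, $T_2:=H(i)$, and transporting $\tilde t_1$ and $h_b$ to stage $i$ produces candidates $t_{12}\colon T_1\to T_2$ and $b\colon FB\to GT_2$; a short diagram chase gives $G\sigma_i\circ Gt_{12}\circ a = u\circ Fb'\circ Fg = G\sigma_i\circ b\circ Fg$, and since $\sigma_i$ is a monomorphism by~\ref{P2} and $G$ preserves monomorphisms by~\ref{P6}, left-cancellation of $G\sigma_i$ yields the required identity $Gt_{12}\circ a = b\circ Fg$.

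The delicate point is the forward direction's requirement that $T_2$ be $\lambda$-small: the naive guess $T_2=T$ fails outright, and recovering a $\lambda$-small witness forces the interplay between \ref{P7}, the chain decomposition of $T$ supplied by $\cat{B}$ being $\lambda$-algebroidal, and the monic-cancellation furnished by \ref{P2} and \ref{P6}.
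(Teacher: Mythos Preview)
The paper does not give a proof of this proposition; it is quoted verbatim from \cite[Proposition~2.24]{PecPec13a}. Your argument is correct and follows exactly the pattern visible in the paper's own proofs of the neighbouring Propositions~\ref{p10transfer} and~\ref{p11transfer}: decompose $T$ as the colimit of a $\lambda$-chain of $\lambda$-small objects using~\ref{P1}, factor morphisms $FA\to GT$ through a finite stage via~\ref{P7}, and cancel the cocone leg $G\sigma_i$ using that it is monic by~\ref{P2} and~\ref{P6}. Your use of Proposition~\ref{univhomsat} to upgrade $(F\komma G)_{<\lambda}$-homogeneity to $(F\komma G)_{<\lambda}$-saturation is legitimate, since all morphisms of $(F\komma G)$ are monic (componentwise, by~\ref{P2}) and $(F\komma G)$ is $\lambda$-algebroidal by Proposition~\ref{lambdaalg}.

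Two minor remarks. First, your closing paragraph speaks of ``the forward direction's requirement that $T_2$ be $\lambda$-small'', but in your own labelling this is the \emph{converse} (saturation $\Rightarrow$~\ref{P11}); the point itself is well taken. Second, you never invoke the hypothesis that $F$ is faithful, and indeed your argument does not seem to need it; this may simply be a safety hypothesis carried over from \cite{PecPec13a}, or the original proof there may proceed somewhat differently.
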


\begin{proposition}\label{p10transfer}
  Let $F\colon  \cat{A}\to\cat{C}$, $G\colon  \cat{B}\to\cat{C}$ be functors such that $(F,G)$ fulfills conditions \ref{P1}--\ref{P7}. Suppose that $\cat{B}$ has a $\cat{B}_{<\lambda}$-universal object $V$. Let $\cat{V}$ be a $\lambda$-algebroidal subcategory of $\cat{B}$ that has $V$ as the only object and let $J\colon  \cat{V}\to\cat{B}$ be the identical embedding functor. Then $(F,G)$ fulfills condition \ref{P10} if $(F,G\circ J)$ does. Moreover, if $V$ is $\cat{B}_{<\lambda}$-saturated and $(F,G)$ fulfills condition \ref{P10}, then so does $(F,G\circ J)$.
\end{proposition}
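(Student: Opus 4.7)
The plan is to prove the two implications separately. Note first that $\cat{V}$ being $\lambda$-algebroidal with sole object $V$ forces $V\in\cat{V}_{<\lambda}$, so condition \ref{P10} for $(F,G\circ J)$ only concerns data where the distinguished $\cat{B}$-object is $V$ at both ends and the output morphism $k$ is an endomorphism of $V$ lying in $\cat{V}$. The technical device throughout will be \ref{P7} together with the $\lambda$-algebroidality of $\cat{B}$ (from \ref{P1}), which let us pass between $V$ and $\lambda$-small objects of $\cat{B}$; monomorphism preservation from \ref{P6} will then supply the cancellations needed to transport equalities across these factorizations.

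For the implication ``$(F,G\circ J)$ has \ref{P10} $\Rightarrow$ $(F,G)$ has \ref{P10}'', I would start from a span over $T\in\cat{B}_{<\lambda}$ with compatible $h_1,h_2$. Using $\cat{B}_{<\lambda}$-universality of $V$, pick $t\colon T\to V$ in $\cat{B}$; the morphisms $Gt\circ h_1$ and $Gt\circ h_2$ remain compatible, so \ref{P10} for $(F,G\circ J)$ produces $C\in\cat{A}_{<\lambda}$, morphisms $g_1,g_2$, $h\colon FC\to GV$ and $k\colon V\to V$ in $\cat{V}$ with the required commutativities over $GV$. To obtain a $\lambda$-small codomain, write $V$ as the colimit of a $\lambda$-chain $H\colon(\lambda,\le)\to\cat{B}_{<\lambda}$ with cocone $(V,(q_i)_{i<\lambda})$. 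By \ref{P7}, $h=Gq_j\circ h'$ for some $h'\colon FC\to GH(j)$, and by $\lambda$-smallness of $T$ in $\cat{B}$ the morphism $k\circ t\colon T\to V$ factors as $q_{j'}\circ k'$ for some $k'\colon T\to H(j')$. Taking $j_0\ge j,j'$ and composing $h'$ and $k'$ with the connecting morphisms of $H$ yields $h^*\colon FC\to GH(j_0)$ and $k^*\colon T\to H(j_0)$ with $h=Gq_{j_0}\circ h^*$ and $k\circ t=q_{j_0}\circ k^*$. Putting $T':=H(j_0)\in\cat{B}_{<\lambda}$, the relation $h\circ Fg_i=Gk\circ Gt\circ h_i$ rewrites as $Gq_{j_0}\circ h^*\circ Fg_i=Gq_{j_0}\circ Gk^*\circ h_i$; since $q_{j_0}$ is monic by \ref{P2} and $G$ preserves monomorphisms by \ref{P6}, cancellation gives $h^*\circ Fg_i=Gk^*\circ h_i$, which together with the unchanged $g_1\circ f_1=g_2\circ f_2$ is exactly the \ref{P10}-datum for $(F,G)$.

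For the converse, assume $V$ is $\cat{B}_{<\lambda}$-saturated and $(F,G)$ has \ref{P10}, and take a span with $T=T'=V$ and compatible $h_1,h_2\colon FB_i\to GV$. Writing $V$ as the colimit of $H$ as above and applying \ref{P7}, factor $h_1$ and $h_2$ through a common $GT_0:=GH(i_0)$: $h_i=Gq_{i_0}\circ\tilde h_i$. Monicity of $Gq_{i_0}$ descends the compatibility to $\tilde h_1\circ Ff_1=\tilde h_2\circ Ff_2$ in $\cat{B}_{<\lambda}$, so \ref{P10} for $(F,G)$ yields $C\in\cat{A}_{<\lambda}$, $T_0'\in\cat{B}_{<\lambda}$, $g_1,g_2$, $h\colon FC\to GT_0'$ and $k\colon T_0\to T_0'$ with the usual commutativity. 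The decisive step is saturation: applied to $q_{i_0}\colon T_0\to V$ and $k\colon T_0\to T_0'$ with $T_0,T_0'\in\cat{B}_{<\lambda}$, $\cat{B}_{<\lambda}$-saturation of $V$ produces $l\colon T_0'\to V$ with $l\circ k=q_{i_0}$. Setting $k^*:=\mathrm{id}_V$ (which lies in $\cat{V}$) and $h^*:=Gl\circ h\colon FC\to GV$, one checks $h^*\circ Fg_i=Gl\circ Gk\circ\tilde h_i=G(l\circ k)\circ\tilde h_i=Gq_{i_0}\circ\tilde h_i=h_i=Gk^*\circ h_i$, which is precisely what \ref{P10} for $(F,G\circ J)$ demands.

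The only real technical obstacle is the first direction: the \ref{P10}-instance produced by $(F,G\circ J)$ lives over the possibly non-$\lambda$-small $V$, and one has to cut it down to a $\cat{B}_{<\lambda}$-object without losing commutativity; this is where combining \ref{P7}, the $\lambda$-smallness of $T$ in $\cat{B}$, and monicity of $Gq_{j_0}$ is essential. The second direction is cleaner and hinges on the definition of $\cat{B}_{<\lambda}$-saturation, which allows $k^*$ to be chosen as the identity.
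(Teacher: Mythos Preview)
Your proof is correct and follows essentially the same route as the paper's: in both directions you use $\lambda$-algebroidality of $\cat{B}$ together with \ref{P7} and $\lambda$-smallness to pass between $V$ and objects of $\cat{B}_{<\lambda}$, and then cancel via the monomorphism $Gq_{j_0}$ (resp.\ $Gq_{i_0}$) afforded by \ref{P2} and \ref{P6}; for the converse you invoke saturation exactly as the paper does, and your explicit choice $k^*=\mathrm{id}_V$ is precisely what the paper leaves implicit in its final diagram. The only cosmetic difference is that the paper also factors $t$ itself through the chain (a step it never subsequently uses), which you rightly omit.
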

\begin{proof}
  Suppose, $(F,G\circ J)$ fulfills condition \ref{P10}. Given $A,B_1,B_2\in\cat{A}_{<\lambda}$, $V'\in\cat{B}_{<\lambda}$, and morphisms $h_1,h_2,f_1,f_2$ such that $h_1\circ Ff_1 = h_2\circ Ff_2$.
  Since $V$ is $\cat{B}_{<\lambda}$-universal, there exists $\iota\colon  V'\to V$. Since $(F, G\circ J)$ fulfills condition \ref{P10}, there exist $C\in\cat{A}_{<\lambda}$ and morphisms $g_1,g_2,h,k$ such that the following diagram commutes:
  \begin{equation}\label{aep2}
  \begin{tikzcd}
    ~ & & GV\rar[dashed]{Gk} & GV\\
    FB_1 \arrow[bend left]{urr}{G\iota\circ h_1}\rar[dashed]{Fg_1}& FC\arrow[bend right,dashed,swap]{rru}[near end]{h}\\
    FA \uar{Ff_1}\rar[swap]{Ff_2} & FB_2\arrow[bend
    right,swap]{uur}{G\iota\circ h_2}\uar[dashed,swap]{Fg_2}
  \end{tikzcd}
  \end{equation}
  and such that $g_1\circ f_1=g_2\circ f_2$.

  Since $\cat{B}$ is $\lambda$-algebroidal, there exists a $\lambda$-chain $H\colon  (\lambda,\le)\to \cat{B}$ of $\lambda$-small objects in $\cat{B}$ and morphisms $\kappa_i\colon   Hi\to V$ ($i<\lambda$), such that $(V,(\kappa_i)_{i<\lambda})$ is a limiting cocone of $H$. Since $V'\in\cat{B}_{<\lambda}$, and $\iota\colon  V'\to V$, there exists $j_1<\lambda$ and $\tilde\iota\colon  V'\to Hj_1$ such that $\iota=\kappa_{j_1}\circ\tilde\iota$. Moreover, since $k\circ\iota\colon   V'\to V$, there exists $j_2<\lambda$ and $\tilde k\colon   V'\to Hj_2$ such that $k\circ\iota = \kappa_{j_2}\circ\tilde{k}$.
 
  Since $C\in\cat{A}_{<\lambda}$, $h\colon   FC\to GV$, and since $(F,G)$ fulfills condition \ref{P7}, there exists $j_3<\lambda$, $\tilde{h}\colon FC\to GHj_3$ such that $h= G\kappa_{j_3}\circ\tilde{h}$. Let $j$ be the maximum of $\{j_1,j_2,j_3\}$. Then we have
  \begin{align}
    \notag\iota &= \kappa_j\circ H(j_1,j)\circ\tilde{\iota}\\
    k\circ\iota &= \kappa_j\circ H(j_2,j)\circ\tilde{k}\label{kiota}\\
    h &= G\kappa_j\circ GH(j_3,j)\circ\tilde{h}\label{hcomp}
  \end{align}
  Let us define 
  \begin{align}
    \hat{k}&:=H(j_2,j)\circ\tilde{k},\label{hatk}\\ 
    \hat{h}&:=GH(j_3,j)\circ\tilde{h}\label{hath}  
  \end{align}
  It remains to show that the following diagram commutes:
  \[
  \begin{tikzcd}
    ~ & & GV'\rar{G\hat{k}} & GHj\\
    FB_1 \arrow[bend left]{urr}{h_1}\rar{Fg_1}& FC\arrow[bend right,swap]{urr}[near end]{\hat{h}} \\
    FA\uar{Ff_1}\rar[swap]{Ff_2} & FB_2.\uar[swap]{Fg_2}\arrow[bend
    right,swap]{uur}{h_2}
  \end{tikzcd}
  \]
  For this we calculate
  \begin{align*}
    G\kappa_j\circ G\hat{k}\circ h_1 &\stackrel{\eqref{hatk}}{=} G(\kappa_j\circ H(j_2,j)\circ\tilde{k})\circ h_1\stackrel{\eqref{kiota}}{=}Gk\circ G\iota\circ h_1\stackrel{\eqref{aep2}}{=} h\circ Fg_1 \\
    &\stackrel{\eqref{hcomp}}{=} G\kappa_j\circ GH(j_3,j)\circ\tilde{h}\circ Fg_1\stackrel{\eqref{hath}}{=} G\kappa_j\circ \hat{h}\circ Fg_1
  \end{align*}
  Since $\kappa_j$ is a monomorphism and since $G$ preserves monos, we conclude $G\hat{k}\circ h_1=\hat{h}\circ Fg_1$. Analogously one shows $G\hat{k}\circ h_2=\hat{h}\circ Fg_2$. Thus we showed that $(F,G)$ fulfills condition \ref{P10}.

  Suppose now that $V$ is $\cat{B}_{<\lambda}$-saturated and that $(F,G)$ fulfills condition \ref{P10}. Let $A,B_1,B_2\in\cat{A}_{<\lambda}$ and let $f_1,f_2,h_1,h_2$ be morphisms such that $h_1\circ Ff_1=h_2\circ Ff_2$.
  Since $\cat{B}$ is $\lambda$-algebroidal, there exists a $\lambda$-chain $H\colon  (\lambda,\le)\to\cat{B}$ of $\lambda$-small objects of $\cat{B}$ and morphisms $v_i\colon  Hi\to V$ ($i<\lambda$) such that $(V, (v_i)_{i<\lambda})$ is a limiting cocone of $H$. By condition \ref{P7}, there exist $j_1,j_2<\lambda$, $\tilde{h}_1\colon  FB_1\to GHj_1$, $\tilde{h}_2\colon  FB_2\to GHj_2$, such that $h_1=Gv_{j_1}\circ\tilde{h}_1$, $h_2=Gv_{j_2}\circ\tilde{h}_2$. Let $j$ be the maximum of $\{j_1,j_2\}$. Then 
  \begin{equation}
    h_1= Gv_j\circ GH(j_1,j)\circ\tilde{h}_1, \text{ and } 
     h_2=Gv_j\circ GH(j_2,j)\circ\tilde{h}_2.   \label{h1comp}   
  \end{equation}
 Let 
 \begin{equation}
    \hat{h}_1:=GH(j_1,j)\circ\tilde{h}_1, \text{ and let }
    \hat{h}_2:=GH(j_2,j)\circ\tilde{h}_2. \label{hath1}
 \end{equation}
 Since $(F,G)$ fulfills condition \ref{P10}, there exist $C\in\cat{A}_{<\lambda}$, $V'\in\cat{B}_{<\lambda}$, and morphisms $g_1,g_2,\hat{h},\hat{k}$ such that the following diagram commutes:
  \begin{equation}\label{aep3}
  \begin{tikzcd}
%    ~& & & GV'\\
    ~& & GHj\rar[dashed]{G\hat{k}} & GV' \\
    FB_1\rar[dashed]{Fg_1}\arrow[bend left]{urr}{\hat{h}_1} & FC\arrow[bend right,dashed,swap]{urr}[near end]{\hat{h}}\\
    FA \uar{Ff_1}\rar[swap]{Ff_2}&
    FB_2.\uar[swap,dashed]{Fg_2}\arrow[bend right,swap]{uur}{\hat{h}_2}
  \end{tikzcd}
  \end{equation}
  Since $V$ is $\cat{B}_{<\lambda}$-saturated and since $v_j\colon  Hj\to V$ and $\hat{k}\colon   Hj\to V'$, there exists $\hat{v}_j\colon   V'\to V$ such that
  \begin{equation}
    v_j=\hat{v}_j\circ\hat{k}.\label{vj}
  \end{equation}
  It remains to show that the following diagram commutes:
  \[
  \begin{tikzcd}
    ~& & GV \\
    FB_1 \rar{Fg_1}\arrow[bend left]{rru}{h_1} & FC\urar[sloped, near end]{G\hat{v}_j\circ\hat{h}} \\
    FA \rar[swap]{Ff_2}\uar{Ff_1}& FB_2.\uar[swap]{Fg_2}\arrow[bend right,swap]{ruu}{h_2}
  \end{tikzcd}
  \]
  To this end we calculate:
  \begin{equation*}
    G\hat{v}_j\circ\hat{h}\circ Fg_1 \stackrel{\eqref{aep3}}{=} G\hat{v}_j\circ G\hat{k}\circ\hat{h}_1 \stackrel{\eqref{vj}}{=} Gv_j\circ \hat{h}_1\stackrel{\eqref{hath1}}{=} Gv_j\circ GH(j_1,j)\circ \tilde{h}_1 \stackrel{\eqref{h1comp}}{=} h_1.
  \end{equation*}
  Analogously one shows that $G\hat{v}_j\circ\hat{h}\circ Fg_2 = h_2$. Thus, $(F,G\circ J)$ fulfills condition \ref{P10}.
\end{proof}

\begin{proposition}\label{p11transfer}
  Let $F\colon  \cat{A}\to\cat{C}$, $G\colon  \cat{B}\to\cat{C}$ be functors such that $(F,G)$ fulfills conditions \ref{P1}--\ref{P7}. Suppose that $\cat{B}$ has a $\cat{B}_{<\lambda}$-universal object $V$. Let $\cat{V}$ be a subcategory of $\cat{B}$ that has $V$ as the only object and let $J\colon  \cat{V}\to\cat{B}$ be the identical embedding functor. Then $(F,G)$ fulfills condition \ref{P11} if $(F, G\circ J)$ does. Moreover, if $V$ is $\cat{B}_{<\lambda}$-saturated and if $(F, G)$ fulfills condition \ref{P11}, then so does $(F, G\circ J)$.
\end{proposition}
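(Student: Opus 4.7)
The plan is to follow closely the structure of the proof of Proposition~\ref{p10transfer}, adapting it to the simpler one-span shape of condition~\ref{P11}. Since $\cat{B}$ is $\lambda$-algebroidal by~\ref{P1}, I fix once and for all a $\lambda$-chain $H\colon (\lambda,\le)\to\cat{B}$ of $\lambda$-small objects whose colimit is $V$, with limiting cocone $(V,(v_i)_{i<\lambda})$. Two moves will be used repeatedly: factoring morphisms $FA\to GV$ with $A\in\cat{A}_{<\lambda}$ through some $GH(j)$ via~\ref{P7}, and factoring morphisms $T\to V$ with $T\in\cat{B}_{<\lambda}$ through some $H(j)$ via the $\lambda$-smallness of $T$ in $\cat{B}$.

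Suppose first that $(F,G\circ J)$ satisfies~\ref{P11}. Given $A,B\in\cat{A}_{<\lambda}$, $T_1\in\cat{B}_{<\lambda}$, $g\colon A\to B$ and $a\colon FA\to GT_1$, I pick $\iota\colon T_1\to V$ by the $\cat{B}_{<\lambda}$-universality of $V$ and apply~\ref{P11} for $(F,G\circ J)$ to the morphism $G\iota\circ a\colon FA\to GV$, obtaining $h'\colon V\to V$ and $b^*\colon FB\to GV$ with $Gh'\circ G\iota\circ a=b^*\circ Fg$. Using~\ref{P7} I produce $j_1<\lambda$ and $\tilde b\colon FB\to GH(j_1)$ with $Gv_{j_1}\circ\tilde b=b^*$, and from the $\lambda$-smallness of $T_1$ I produce $j_2<\lambda$ and $\tilde k\colon T_1\to H(j_2)$ with $v_{j_2}\circ\tilde k=h'\circ\iota$. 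Setting $j:=\max\{j_1,j_2\}$, I propose $T_2:=H(j)$, $h:=H(j_2,j)\circ\tilde k$, and $b:=GH(j_1,j)\circ\tilde b$. The required identity $Gh\circ a=b\circ Fg$ will then follow because precomposition of both sides with $Gv_j$ produces the common value $b^*\circ Fg$, combined with the fact that $Gv_j$ is a monomorphism in $\cat{C}$ by~\ref{P2} together with~\ref{P6}.

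For the converse, suppose $V$ is $\cat{B}_{<\lambda}$-saturated and $(F,G)$ satisfies~\ref{P11}. Given $A,B\in\cat{A}_{<\lambda}$, $g\colon A\to B$ and $a\colon FA\to GV$, I first apply~\ref{P7} to obtain $j<\lambda$ and $\tilde a\colon FA\to GH(j)$ with $Gv_j\circ\tilde a=a$, and then apply~\ref{P11} for $(F,G)$ to $g$ and $\tilde a$ to obtain $T_2\in\cat{B}_{<\lambda}$, $k\colon H(j)\to T_2$ and $\hat b\colon FB\to GT_2$ with $Gk\circ\tilde a=\hat b\circ Fg$. Saturation of $V$ applied to $v_j\colon H(j)\to V$ and $k\colon H(j)\to T_2$ produces $\hat v\colon T_2\to V$ with $\hat v\circ k=v_j$. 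Taking $h':=\mathrm{id}_V$ and $b:=G\hat v\circ\hat b$ will yield the required commutativity via the direct chain $b\circ Fg=G\hat v\circ Gk\circ\tilde a=G(\hat v\circ k)\circ\tilde a=Gv_j\circ\tilde a=a=Gh'\circ a$.

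I do not expect any step to pose a genuine obstacle beyond careful bookkeeping of the indices and diligent use of the facts that morphisms in $\cat{B}$ are monic by~\ref{P2} and that $G$ preserves monomorphisms by~\ref{P6}. The mildly subtle point is that in the first direction~\ref{P7} must be applied to $b^*$ (whose source $FB$ lies in the image of $F$) while the $\lambda$-smallness of $T_1$ is applied directly to $h'\circ\iota$ (whose source already lies in $\cat{B}$); these two factorings invoke conceptually different smallness properties, and one has to pass to a common upper bound $j$ of the resulting indices before attempting to verify the required commutativity.
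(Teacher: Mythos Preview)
Your proposal is correct and follows essentially the same argument as the paper: in both directions you use the same factoring moves (via \ref{P7} and $\lambda$-smallness in $\cat{B}$) and the same cancellation of $Gv_j$ using \ref{P2} and \ref{P6}, with only cosmetic differences in notation. One terminological quibble: where you write ``precomposition of both sides with $Gv_j$'' you mean composition on the left, i.e.\ applying $Gv_j\circ(-)$; the computation you describe is correct regardless.
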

\begin{proof}
  Since $\cat{B}$ is $\lambda$-algebroidal, there exists a $\lambda$-chain $H\colon  (\lambda,\le)\to\cat{B}$ of $\lambda$-small objects in $\cat{B}$ and morphisms $v_i\colon  Hi\to V$ for every $i<\lambda$, such that $(V,(v_i)_{i<\lambda})$ is a limiting cocone for $H$.

  Suppose that $(F, G\circ J)$ fulfills condition \ref{P11}. Let $A,B\in\cat{A}_{<\lambda}$, $T\in\cat{B}_{<\lambda}$, $g\colon  A\to B$, $a\colon  FA\to GT$. Since $V$ is $\cat{B}_{<\lambda}$-universal, there exists $\iota\colon  T\to V$. Hence, by condition \ref{P11}, there exists $h\colon  V\to V$, $b\colon  FB\to GV$ such that the following diagram commutes:
  \begin{equation}\label{mHAPdiag}
  \begin{tikzcd}
    FB \arrow[dashed,swap]{rr}{b}& & GV\\
    FA \rar{a}\uar{Fg}& GT\rar{G\iota} & GV.\uar[swap,dashed]{Gh} 
  \end{tikzcd}
  \end{equation}
  By condition \ref{P7}, there exists $j_1<\lambda$, $\tilde{b}\colon    FB\to GHj_1$ such that $b=Gv_{j_1}\circ\tilde{b}$. Moreover, since $T\in\cat{B}_{<\lambda}$, there exists $j_2<\lambda$, $\tilde{h}\colon   T\to Hj_2$ such that $h\circ\iota= v_{j_2}\circ\tilde{h}$. Let $j$ be the maximum of $\{j_1,j_2\}$. Then 
  \begin{align}
    b&=Gv_j\circ GH(j_1,j)\circ\tilde{b}  \text{ and}\label{bcomp}\\
    h\circ\iota &= v_j\circ H(j_2,j)\circ\tilde{h}.\label{hiotacomp}  
  \end{align}
 Define 
 \begin{align}
    \hat{b}&:=GH(j_1,j)\circ\tilde{b} \text{ and}\label{bhatdef}\\ 
    \hat{h}&:=H(j_2,j)\circ\tilde{h}.\label{hathdef} 
 \end{align}
 It remains to observe that the following diagram commutes:
  \[
  \begin{tikzcd}
    FB \rar[dashed,swap]{\hat{b}}& GHj\\
    FA \rar{a}\uar{Fg}& GT.\uar[swap,dashed]{G\hat{h}}
  \end{tikzcd}
  \]
  Indeed, we compute
  \begin{align*}
    Gv_j\circ G\hat{h}\circ a &\stackrel{\eqref{hathdef}}{=} Gv_j\circ GH(j_2,j)\circ G\tilde{h}\circ a\stackrel{\eqref{hiotacomp}}{=} Gh\circ G\iota\circ a \stackrel{\eqref{mHAPdiag}}{=}b\circ Fg \\&\stackrel{\eqref{bcomp}}{=} Gv_j\circ GH(j_1,j)\circ \tilde{b}\circ Fg\stackrel{\eqref{bhatdef}}{=}Gv_j\circ\hat{b}\circ Fg.
  \end{align*}
  Since $v_j$ is a monomorphism and since $G$ preserves monos, we obtain $G\hat{h}\circ a= \hat{b}\circ Fg$. Thus $(F,G)$ fulfills condition \ref{P11}.
    
  Suppose now that $(F,G)$ fulfills condition \ref{P11} and that $V$ is $\cat{B}_{<\lambda}$-saturated. Let $A,B\in\cat{A}_{<\lambda}$, $g\colon   A\to B$, and $a\colon   FA\to GV$. Then, by condition \ref{P7}, there exists $j<\lambda$ and $\hat{b}\colon   FA\to GHj$ such that
  \begin{equation}\label{acomp}
    a=Gv_j\circ\hat{a}.  
  \end{equation}
 By condition \ref{P11}, there exists $V'\in\cat{B}_{<\lambda}$, $\hat{b}\colon   FB\to GV'$, $\hat{h}\colon   Hj\to V'$ such that the following diagram commutes:
  \begin{equation}\label{mHAP2diag}
  \begin{tikzcd}
    FB \rar[dashed,swap]{\hat{b}}& GV'\\
    FA \rar{\hat{a}}\uar{Fg}& GHj\rar{Gv_j}\uar[swap,dashed]{G\hat{h}} & GV.
  \end{tikzcd}
  \end{equation}
  Since $V$ is $\cat{B}_{<\lambda}$-saturated, there exists $\iota\colon    V' \to V$ such that 
  \begin{equation}\label{vjcomp}
    \iota\circ\hat{h}= v_j.  
  \end{equation}
 It remains to observe that the following diagram commutes:
  \[
  \begin{tikzcd}
    FB \drar{G\iota\circ\hat{b}}\\
    FA\uar{Fg}\rar{a} & GV.
  \end{tikzcd}
  \]
  Indeed, we compute
  \begin{equation*}
    G\iota\circ\hat{b}\circ Fg \stackrel{\eqref{mHAP2diag}}{=} G\iota\circ G\hat{h}\circ\hat{a}\stackrel{\eqref{vjcomp}}{=}Gv_j\circ\hat{a}\stackrel{\eqref{acomp}}{=}  a.
  \end{equation*}
  Thus, $(F, G\circ J)$ fulfills \ref{P11}.
\end{proof}

\subsection{Criteria for the existence of universal homogeneous polymorphisms}\label{sec3.4}
In the following we fix a signature $\uSigma$. With $\cat{C}_{\uSigma}$ we will denote the category of all $\uSigma$-structures with homomorphisms as morphisms. Moreover, we fix an arbitrary countably infinite $\uSigma$-structure $\bU$, and for every $n\in\bN\setminus\{0\}$ we denote by  $P_n\colon   (\overline{\Age(\bU)},\injto)\to\cat{C}_{\uSigma}$ the functor given by 
  $P_n\colon   \bA\mapsto\bA^n, f\mapsto f^n$. 
Finally, by $\cat{B}$ we will denote the category that has only one object $\bU$ and only one morphism $1_\bU$, and with $G$ we will denote the identical embedding functor from $\cat{B}$ to $\cat{C}_\uSigma$.

\begin{lemma}\label{prodcocont}
  With the notions from above, for every $n\in\bN\setminus\{0\}$, the functor $P_n$ preserves colimits of $\omega$-chains.
\end{lemma}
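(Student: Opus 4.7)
The plan is to identify colimits of $\omega$-chains in $(\overline{\Age(\bU)},\injto)$ with directed unions, and then to observe that the $n$-th Cartesian power commutes with such unions because $n$ is finite. Concretely, let $F\colon(\omega,\le)\to(\overline{\Age(\bU)},\injto)$ be an $\omega$-chain with limiting cocone $(\bA,(f_i)_{i<\omega})$. Since every morphism in $(\overline{\Age(\bU)},\injto)$ is an embedding, after replacing the diagram by an isomorphic one we may assume that each $F(i)$ is a substructure of $\bA$, that each $f_i$ is the corresponding inclusion, and that $A=\bigcup_{i<\omega}F(i)$ as sets, the relations of $\bA$ being the directed unions of those of the $F(i)$. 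I claim that $(\bA^n,(f_i^n)_{i<\omega})$ is a limiting cocone of $P_n\circ F$ in $\cat{C}_\uSigma$.

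That $(\bA^n,(f_i^n)_{i<\omega})$ is a cocone follows from functoriality of $P_n$. For the universal property, let $(\bC,(g_i)_{i<\omega})$ be an arbitrary cocone of $P_n\circ F$ in $\cat{C}_\uSigma$. Given any $n$-tuple $\ta=(a_1,\dots,a_n)\in\bA^n$, each coordinate $a_k$ lies in some $F(i_k)$; since $n$ is finite and the chain is monotone, setting $i:=\max_k i_k$ yields $\ta\in F(i)^n$. Define $h(\ta):=g_i(\ta)$. Well-definedness follows because whenever $\ta\in F(i)^n$ and $j\ge i$, the cocone condition gives $g_j(\ta)=g_j(F(i,j)^n(\ta))=g_i(\ta)$. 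Uniqueness of the mediating morphism is forced by the identity $\bA^n=\bigcup_{i<\omega}F(i)^n$, which again uses the finiteness of $n$.

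To see that $h$ is a homomorphism of $\uSigma$-structures, fix a $k$-ary $\varrho\in\Sigma$ and a tuple $(\ta_1,\dots,\ta_k)\in\varrho^{\bA^n}$. Each $\ta_j$ has only finitely many coordinates, so for $i$ sufficiently large all of the $\ta_j$ belong to $F(i)^n$. Because $F(i)\injto\bA$ is an embedding, so is $F(i)^n\injto\bA^n$, hence $(\ta_1,\dots,\ta_k)\in\varrho^{F(i)^n}$; applying the homomorphism $g_i$ yields $(h(\ta_1),\dots,h(\ta_k))=(g_i(\ta_1),\dots,g_i(\ta_k))\in\varrho^\bC$, as required. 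The whole argument is essentially bookkeeping around the definitions of colimit and of product structure; the only real content is the identity $\bA^n=\bigcup_{i<\omega}F(i)^n$, which is where the finiteness of $n$ enters and which is the only point one needs to scrutinise carefully.
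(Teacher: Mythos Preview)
Your proof is correct and follows essentially the same approach as the paper's own proof: both reduce to an inclusion chain and exploit the identity $\bigcup_{i<\omega} F(i)^n = \big(\bigcup_{i<\omega} F(i)\big)^n$, which is exactly where finiteness of $n$ is used. The only difference is that the paper appeals directly to the known description of colimits of chains in $\cat{C}_\uSigma$ as directed unions, whereas you spell out the verification of the universal property explicitly; this makes your argument slightly longer but entirely self-contained.
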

\begin{proof}
  We are going to make use of the fact that we know how colimits of chains may be constructed in $(\overline{\Age(\bU)},\injto)$ and in $\cat{C}_{\uSigma}$.

  Let $H\colon    (\omega,\le)\to(\overline{\Age(\bU)},\injto)$. Without loss of generality, we may assume that for all $j_1\leq j_2\in \omega$ we have that $Hj_1\le Hj_2$, and that $H(j_1,j_2)\colon    Hj_1\injto Hj_2$ is the identical embedding. For better readability, for every $j\in\omega$, we will denote $Hj$ by $\bV_j$.
 
  Let $\bV:=\bigcup_{j<\omega} \bV_j$ and let $v_{j}\colon \bV_j\injto \bV$ be the identical embedding. Then $(\bV,(v_{j})_{j\in\omega})$ is a limiting cocone of $H$.

  Note now that for all $j_1\le j_2<\omega$ we have that $P_n(H(j_1,j_2)) \colon    \bV_{j_1}^n\injto \bV_{j_2}^n$ is the identical embedding and that for every $j\in\omega$ we have that $P_n(v_j)\colon    \bV_j^n\injto \bV^n$ is the identical embedding, too. Moreover, $\bigcup_{j\in\omega} \bV_j^n= \bV^n$. Thus, $(\bV^n,(v_j^n)_{j\in\omega})$ is a limiting cocone of $P_n\circ H$. It follows that $P_n$ preserves colimits of $\omega$-chains.
\end{proof}

\begin{lemma}\label{p1p8OneObj}
  With the notions from above the comma-category  $(P_n\komma G)$ is $\omega$-algebroidal. 
\end{lemma}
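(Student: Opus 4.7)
The plan is to derive the lemma immediately from Proposition~\ref{lambdaalg}: I would verify that the pair $(P_n, G)$ of functors satisfies all of conditions \ref{P1}--\ref{P8} for $\lambda = \omega$, and then simply quote that proposition. Since $G$'s source is a one-object, one-morphism category, most of the checks reduce to triviality.

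Concretely, I would first note that \ref{P1} holds because $(\overline{\Age(\bU)}, \injto)$ is $\omega$-algebroidal by Example~\ref{exmpllambdaalg}(2), while $\cat{B}$ is trivially $\omega$-algebroidal (it has one object $\bU$, which is $\omega$-small in $\cat{B}$ for the obvious reason, and one morphism). Condition \ref{P2} is immediate, since embeddings are monomorphisms and $1_\bU$ is a monomorphism. Property \ref{P3} is precisely the content of Lemma~\ref{prodcocont}. For \ref{P4}, I would observe that a $\mu$-chain with $\mu$ a finite positive ordinal has as its colimit just its top value, which every functor preserves; if $\overline{\Age(\bU)}$ happens to contain the empty structure (making $0$-chains admit a colimit), then $P_n$ still preserves that colimit, as $n \ge 1$ forces $\emptyset^n = \emptyset$ and the empty structure remains initial in $\cat{C}_\uSigma$. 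Properties \ref{P5}, \ref{P6}, and \ref{P7} are handed to us directly by Lemma~\ref{p567}, because the only morphism of $\cat{B}$ is the identity isomorphism $1_\bU$.

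The one step that needs a little more than one line is \ref{P8}: for $\bA \in \cat{A}_{<\omega} = \Age(\bU)$, the set $A^n$ is finite, and since $\bU$ is countable, there are at most $|U|^{|A^n|}$, i.e., countably many, homomorphisms $\bA^n \to \bU$. Thus $\cat{C}_\uSigma(P_n\bA, G\bU)$ has cardinality at most $\omega$, as required. With all eight properties verified, Proposition~\ref{lambdaalg} yields that $(P_n \komma G)$ is $\omega$-algebroidal.

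The main obstacle, if any, is little more than keeping the bookkeeping straight: Proposition~\ref{lambdaalg} has absorbed all of the substantive categorical work, so each of the eight items is either instantaneous, a direct citation (to Example~\ref{exmpllambdaalg}, Lemma~\ref{prodcocont}, or Lemma~\ref{p567}), or a routine cardinality remark.
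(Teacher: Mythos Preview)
Your proposal is correct and follows essentially the same approach as the paper: both verify conditions \ref{P1}--\ref{P8} one by one (citing Example~\ref{exmpllambdaalg}, Lemma~\ref{prodcocont}, and Lemma~\ref{p567} at the same points) and then invoke Proposition~\ref{lambdaalg}. The only differences are cosmetic---you spell out the finite-chain and empty-structure cases for \ref{P4} a bit more explicitly than the paper does.
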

\begin{proof}
  We already noted above (cf.\ Example~\ref{exmpllambdaalg}) that $(\overline{\Age(\bU)},\injto)$ and $\cat{B}$ are $\omega$-algebroidal. Moreover, by definition, all morphisms of $\cat{B}$ and $(\overline{\Age(\bU)},\injto)$ are monomorphisms. Thus, $(P_n, G)$ has properties \ref{P1} and \ref{P2}. By Lemma~\ref{prodcocont}, $(P_n, G)$ fulfills property \ref{P3}. Trivially, $P_n$ preserves colimits of finite chains. Thus $(P_n, G)$ satisfies property \ref{P4}. Now, by Lemma~\ref{p567}, $(P_n, G)$ fulfills properties \ref{P5}, \ref{P6}, \ref{P7}. 

  Let $\bA\in\Age(\bU)$. Then we have that  $P_n(\bA)=\bA^n$ is finite, too. Hence, since $\bU$ is countable, there are just countably many homomorphisms from $\bA^n$ to $\bU$. Thus, $(P_n, G)$ fulfills condition \ref{P8}.
  
  Now, by Proposition~\ref{lambdaalg}, $(P_n\komma G)$ is $\omega$-algebroidal. 
\end{proof}

\begin{lemma}\label{p1p8Age}
  With the notions from above, the comma-category $(P_n\komma P_1)$ is $\omega$-algebroidal. 
\end{lemma}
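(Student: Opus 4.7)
The plan is to mimic the proof of Lemma~\ref{p1p8OneObj}, verifying conditions \ref{P1}--\ref{P8} for the pair $(P_n,P_1)$ and then invoking Proposition~\ref{lambdaalg}. The key difference is that $G$ (constant at $\bU$) is replaced by $P_1$ on the right, so properties \ref{P5}--\ref{P7} need to be re-examined; everything else transfers almost verbatim.

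For \ref{P1} and \ref{P2}: both the domain and codomain of $P_n$ and $P_1$ are the $\omega$-algebroidal category $(\overline{\Age(\bU)},\injto)$ (cf.\ Example~\ref{exmpllambdaalg}), and all its morphisms are embeddings, hence monos. For \ref{P3}, Lemma~\ref{prodcocont} shows that $P_n$ preserves colimits of $\omega$-chains (and the same proof works for $P_1$, which is naturally isomorphic to the identity). Since $P_n$ also clearly preserves colimits of finite chains, \ref{P4} holds.

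The remaining properties concern $P_1$ on the right-hand side. For \ref{P5}, since $P_1$ is (up to natural iso) the identity, it preserves all colimits, in particular those of $\omega$-chains of $\omega$-small objects. For \ref{P6}, $P_1$ sends embeddings to embeddings, hence preserves monos. The only genuinely non-trivial step is \ref{P7}. Given an $\omega$-chain $H\colon(\omega,\le)\to(\overline{\Age(\bU)},\injto)$ with limiting cocone $(\bB,(g_i)_{i<\omega})$, an object $\bA\in\Age(\bU)$, and a homomorphism $f\colon P_n\bA=\bA^n\to P_1\bB=\bB$, I would observe that $\bA$ is finite and so is $\bA^n$; hence $f(\bA^n)$ is a finite subset of $\bB=\bigcup_{i<\omega}g_i(Hi)$. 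By finiteness it is contained in $g_j(Hj)$ for some $j<\omega$, and since $g_j$ is an embedding (hence injective), there is a unique $h\colon\bA^n\to Hj$ with $g_j\circ h=f$. This $h$ is a homomorphism because $g_j$ is an embedding, completing \ref{P7}.

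Finally, for \ref{P8}, given $\bA,\bB\in\Age(\bU)$, both $\bA^n$ and $\bB$ are finite, so $\operatorname{Hom}(\bA^n,\bB)$ is finite, in particular of size at most $\omega$. With \ref{P1}--\ref{P8} verified, Proposition~\ref{lambdaalg} yields that $(P_n\komma P_1)$ is $\omega$-algebroidal, and moreover identifies its $\omega$-small objects as those triples $(\bA,f,\bT)$ with $\bA,\bT\in\Age(\bU)$. The main (very small) obstacle is \ref{P7}; everything else is a direct adaptation of Lemma~\ref{p1p8OneObj}.
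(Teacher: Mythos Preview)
Your proposal is correct and follows essentially the same route as the paper: verify \ref{P1}--\ref{P8} for $(P_n,P_1)$ and invoke Proposition~\ref{lambdaalg}. The paper's proof is slightly terser (it simply notes that $P_n$ maps finite structures to finite structures and that $P_1$ is the identical embedding functor to dispatch \ref{P7} and \ref{P8}, and it cites Lemma~\ref{prodcocont} for both \ref{P3} and \ref{P5}), whereas you spell out the finiteness argument for \ref{P7} explicitly; but there is no substantive difference in approach.
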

\begin{proof}
  We already noted above that $(\overline{\Age(\bU)},\injto)$ is $\omega$-algebroidal. Moreover, all morphisms of $(\overline{\Age(\bU)},\injto)$ are monomorphisms. Thus $(P_n, P_1)$ has properties \ref{P1} and \ref{P2}. By Lemma~\ref{prodcocont}, $(P_n, P_1)$ has properties \ref{P3} and \ref{P5}. Trivially, $P_n$ preserves colimits of finite chains. Thus $(P_n, P_1)$ fulfills property \ref{P4}.  Since every morphism of $(\overline{\Age(\bU)},\injto)$ is an embedding, every embedding is a monomorphism in $\cat{C}_\uSigma$, and since  $P_1$ is the identical embedding functor, we have that $(P_n, P_1)$ fulfills property \ref{P6}. 
  
  Since $P_n$ maps finite structures to finite structures, and since $P_1$ is the identical embedding functor, $(P_n, P_1)$ satisfies  property \ref{P7}. 
  
  Again, since $P_n$ maps finite structures to finite structures, $(P_n, P_1)$ has property \ref{P8}.
  
    Now, by Proposition~\ref{lambdaalg}, $(P_n\komma P_1)$ is $\omega$-algebroidal. 
\end{proof}

\begin{observation}\label{obscategories}
  With the notions from above, a polymorphism $u\colon   \bU^n\to\bU$  is universal and homogeneous if and only if $(\bU,u,\bU)$ is $(P_n\komma G)$-universal and $(P_n\komma G)_{<\omega}$-homogeneous.  
\end{observation}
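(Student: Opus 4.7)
The plan is to simply unpack the definitions: this observation is essentially a translation exercise between the abstract comma-category formalism and the concrete definitions of universal and homogeneous polymorphism from Subsection 4.1.

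First I would identify morphisms of $(P_n\komma G)$ concretely. Since $\cat{B}$ has only the identity morphism, a morphism from $(\bA,f,\bU)$ to $(\bA',f',\bU)$ is a pair $(\iota,1_\bU)$ where $\iota\colon \bA\injto\bA'$ is an embedding, and the commutativity requirement $G(1_\bU)\circ f = f'\circ P_n(\iota)$ reduces to the condition $f(a_1,\dots,a_n) = f'(\iota(a_1),\dots,\iota(a_n))$ for all $(a_1,\dots,a_n)\in A^n$. In particular, the automorphisms of $(\bU,u,\bU)$ in $(P_n\komma G)$ are exactly the pairs $(h,1_\bU)$ with $h\in\Aut(\bU)$ satisfying $u(h(x_1),\dots,h(x_n))=u(x_1,\dots,x_n)$ for all tuples.

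Next I would characterise the $\omega$-small objects. By Proposition~\ref{lambdaalg}, whose hypotheses have been verified for $(P_n,G)$ in Lemma~\ref{p1p8OneObj}, an object $(\bA,f,\bU)$ of $(P_n\komma G)$ is $\omega$-small if and only if $\bA\in(\overline{\Age(\bU)},\injto)_{<\omega}$ and $\bU\in\cat{B}_{<\omega}$. The second condition is automatic because $\cat{B}$ has only one object, and by Example~\ref{exmpllambdaalg} the first condition reads $\bA\in\Age(\bU)$.

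With these translations in hand the equivalence is immediate. Saying that $(\bU,u,\bU)$ is $(P_n\komma G)$-universal unpacks to: for every $\bA\in\overline{\Age(\bU)}$ and every homomorphism $f\colon \bA^n\to\bU$ there exists an embedding $\iota\colon \bA\injto\bU$ with $f(a_1,\dots,a_n)=u(\iota(a_1),\dots,\iota(a_n))$, which is verbatim the definition of $u$ being universal. Saying that $(\bU,u,\bU)$ is $(P_n\komma G)_{<\omega}$-homogeneous unpacks to: for every $\bA\in\Age(\bU)$, every $f\colon \bA^n\to\bU$, and every pair of embeddings $\iota_1,\iota_2\colon \bA\injto\bU$ satisfying $u\circ\iota_i^n=f$ for $i=1,2$, there is an automorphism $h\in\Aut(\bU)$ with $h\circ\iota_1=\iota_2$ and $u\circ h^n=u$, which is exactly the definition of $u$ being homogeneous. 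Since the proof is pure definition-chasing, there is no substantive obstacle; the only point requiring minor care is to correctly spell out composition in the comma category and to invoke Proposition~\ref{lambdaalg} for the characterisation of $\omega$-small objects.
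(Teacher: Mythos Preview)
Your proposal is correct and is exactly the intended definition-chasing argument; the paper states this as an observation without proof, relying on the reader to carry out precisely the unpacking you describe (with Example~\ref{mainex} already signalling the correspondence informally). Your only addition beyond what the paper makes explicit is the invocation of Proposition~\ref{lambdaalg} to identify the $\omega$-small objects, which is appropriate and accurate.
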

    
\begin{definition}
  Let $\class{C}$ be a class of structures of the same type, and let $n\in\bN\setminus\{0\}$. We say that $\class{C}$ has the \emph{$\AEPn$} if for all $\bA,\bB_i, \bT\in\class{C}$, $f_i\colon   \bA\injto\bB_i$, $h_i\colon   \bB_i^n\to \bT$ (where $i\in\{1,2\}$), with $h_1\circ f_1^n= h_2\circ f_2^n$, there exist $\bC,\bT'\in\class{C}$, $g_i\colon   \bB_i\injto\bC$ (where $i\in\{1,2\}$),  $h\colon   \bC^n \to \bT'$, $k\colon   \bT\injto\bT'$ such that the following diagrams commute:
  \[
  \begin{tikzcd}
    &   & &  & \bT \rar[hook,dashed][swap]{k} & \bT'\\
    \bB_1 \rar[hook,dashed]{g_1}& \bC & \bB_1^n \arrow[bend
    left]{urr}{h_1} \rar[hook,dashed]{g_1^n}
    & \bC^n\arrow[bend right,dashed]{urr}[swap,near end]{h}\\
    \bA \uar[hook]{f_1}\rar[hook]{f_2}&
    \bB_2\uar[hook,dashed,swap]{g_2} & \bA^n
    \uar[hook]{f_1^n}\rar[hook]{f_2^n} & \bB_2^n. \arrow[bend
    right]{uur}[swap]{h_2} \uar[dashed,swap,hook]{g_2^n}
  \end{tikzcd}
  \]
\end{definition}

\begin{definition}
  Let $\class{C}$ be a class of structures of the same type, and let $n\in\bN\setminus\{0\}$. We say that $\class{C}$ has the \emph{\HAPn} if for all $\bA, \bB\in\class{C}$, $g\colon   \bA\injto\bB$, $\bT_1\in\class{C}$, $a\colon   \bA^n\to \bT_1$ there exist $\bT_2\in\class{C}$, $b\colon   \bB^n\to \bT_2$, $h\colon   \bT_1\injto\bT_2$ such that the following diagram commutes:
  \[
  \begin{tikzcd}
    \bB^n \rar[dashed]{b}& \bT_2\\
    \bA^n\rar{a}\uar[hook]{g^n} & \bT_1.\uar[swap,hook,dashed]{h} 
  \end{tikzcd}
  \]
	If $n=1$, then the \HAPn is just the \HAP.
\end{definition}
\begin{remark}
	Note that if $\class{C}$ is closed with respect to finite products, then it has the $\HAPn$ for every $n\in\bN\setminus\{0\}$ if and only if it has the $\HAP$.
\end{remark}

\begin{theorem}\label{univhompoly}
    Let $\bU$ be a countable homogeneous relational structure and let $n\in\bN\setminus\{0\}$. Then $\bU$ has an $n$-ary universal homogeneous polymorphism if and only if $\Age(\bU)$ has the \AEPn and the \HAPn.
\end{theorem}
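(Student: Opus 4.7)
The plan is to reduce the theorem to the axiomatic \Fraisse machinery already assembled. By Observation~\ref{obscategories}, an $n$-ary universal homogeneous polymorphism of $\bU$ is precisely a $(P_n\komma G)$-universal, $(P_n\komma G)_{<\omega}$-homogeneous object of the particular shape $(\bU,u,\bU)$. So the question reduces to (i) when such an object exists in $(P_n\komma G)$ at all, and (ii) when its first component is isomorphic to $\bU$. Both comma categories $(P_n\komma G)$ and $(P_n\komma P_1)$ are $\omega$-algebroidal (Lemmas~\ref{p1p8OneObj}, \ref{p1p8Age}); moreover, an immediate rewriting of definitions shows that property \ref{P10} for $(P_n,P_1)$ is \emph{exactly} the \AEPn on $\Age(\bU)$, and property \ref{P11} for $(P_n,P_1)$ is \emph{exactly} the \HAPn on $\Age(\bU)$.

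Next I invoke the transfer machinery. Since $\bU$ is countable and homogeneous, it is the \Fraisse limit of $\Age(\bU)$, hence is $\cat{B}_{<\omega}$-universal and $\cat{B}_{<\omega}$-saturated in $\cat{B}=(\overline{\Age(\bU)},\injto)$. Taking $\cat{V}$ to be the one-object subcategory spanned by $\bU$, the inclusion $J\colon\cat{V}\to\cat{B}$ satisfies $P_1\circ J=G$ (our functor from the preceding subsection). Propositions~\ref{p10transfer} and \ref{p11transfer} therefore yield the equivalences: $(P_n,P_1)$ has \ref{P10} iff $(P_n,G)$ has \ref{P10}; and $(P_n,P_1)$ has \ref{P11} iff $(P_n,G)$ has \ref{P11}. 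Combined with the previous paragraph, it thus suffices to prove that $\bU$ admits an $n$-ary universal homogeneous polymorphism iff $(P_n,G)$ satisfies \ref{P10} and \ref{P11}.

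For the ``if'' direction, assume $(P_n,G)$ satisfies \ref{P10} and \ref{P11}. Since the one-object category underlying $G$ trivially has \AP{} and \JEP, Proposition~\ref{homkomma} gives \AP{} for $(P_n\komma G)_{<\omega}$; \JEP{} follows via Lemma~\ref{JEPAP} from the existence of a weakly initial object (built from a singleton of $\Age(\bU)$ together with the constant polymorphism to any element of $\bU$, using the \JEP{} of $\Age(\bU)$). Theorem~\ref{DroGoe} then produces a $(P_n\komma G)$-universal, $(P_n\komma G)_{<\omega}$-homogeneous object $(\bV,u,\bU)$. Universality forces $\bV$ to be $\cat{A}$-universal; \ref{P11} and Proposition~\ref{firstobjsat} force $\bV$ to be $\Age(\bU)$-saturated. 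By the uniqueness of the countable \Fraisse limit, $\bV\cong\bU$, and transporting $u$ along this isomorphism yields the desired polymorphism. For the ``only if'' direction, assume $u\colon\bU^n\to\bU$ is universal and homogeneous. Then $(\bU,u,\bU)$ is the universal homogeneous object in $(P_n\komma G)$, so $(P_n\komma G)_{<\omega}$ has \AP{}; by Proposition~\ref{homkomma}, $(P_n,G)$ satisfies \ref{P10}. Since the first component $\bU$ is $\Age(\bU)$-saturated (being the \Fraisse limit), Proposition~\ref{firstobjsat} yields \ref{P11}. Transferring back to $(P_n,P_1)$ delivers \AEPn and \HAPn.

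The main obstacle is organizational rather than mathematical: verifying that the abstract properties \ref{P9}--\ref{P11} for the two parallel comma-category setups $(P_n\komma G)$ and $(P_n\komma P_1)$ unpack precisely as the concrete combinatorial conditions on $\Age(\bU)$ and that the saturation/universality of $\bU$ as a \Fraisse limit is used in exactly the right place so that the first component of the universal homogeneous object in $(P_n\komma G)$ can be identified with $\bU$ itself (rather than with some other structure $\bV\in\overline{\Age(\bU)}$). Once this bookkeeping is set up, the proof is essentially a direct composition of Theorem~\ref{DroGoe}, Proposition~\ref{homkomma}, Proposition~\ref{firstobjsat}, and the transfer Propositions~\ref{p10transfer}, \ref{p11transfer}.
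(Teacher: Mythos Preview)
Your plan follows the paper's proof essentially verbatim: translate the problem into $(P_n\komma G)$ via Observation~\ref{obscategories}, use the transfer Propositions~\ref{p10transfer} and~\ref{p11transfer} to move between $(P_n,G)$ and $(P_n,P_1)$, and invoke Theorem~\ref{DroGoe}, Proposition~\ref{homkomma}, and Proposition~\ref{firstobjsat} exactly as you describe.

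There are, however, two small slips that both stem from overlooking the empty structure. First, your weakly initial object in $(P_n\komma G)_{<\omega}$ does not work: for a singleton $\bA_0=\{a\}$ with constant map $f_0\colon\bA_0^n\to\bU$ sending everything to $u_0$, a morphism to $(\bB,g,\bU)$ would require an embedding $\iota\colon\bA_0\injto\bB$ with $g(\iota(a),\dots,\iota(a))=u_0$; there is no reason such a point exists in $\bB$ (and the constant map need not even be a homomorphism unless $u_0$ carries all loops). The paper simply takes $(\emptyset,\emptyset,\bU)$, which is genuinely initial. Second, your sentence ``Universality forces $\bV$ to be $\cat{A}$-universal'' is not justified: to embed an arbitrary $\bA\in\overline{\Age(\bU)}$ into $\bV$ via universality of $(\bV,u,\bU)$ you would first need some homomorphism $\bA^n\to\bU$, which is not given. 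The paper instead derives $\Age(\bU)$-saturation of $\bV$ from \ref{P11} and Proposition~\ref{firstobjsat} (as you do), then uses that $\emptyset$ is initial in $(\Age(\bU),\injto)$ together with Proposition~\ref{univhomsat} to conclude that $\bV$ is $(\overline{\Age(\bU)},\injto)$-universal and $(\Age(\bU),\injto)$-homogeneous, whence $\bV\cong\bU$ by \Fraisse. Once you replace your singleton by the empty structure in both places, your argument is identical to the paper's.
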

\begin{proof}
  Consider the categories and functors from the beginning of Section~\ref{sec3.4}. From Lemmas~\ref{p1p8OneObj} and \ref{p1p8Age} it follows $(P_n,G)$, and $(P_n,P_1)$ are both $\omega$-algebroidal.
    
  ``$\Rightarrow$'': Suppose that $\Age(\bU)$ has the \AEPn and the \HAPn. Then we have that $(P_n,P_1)$ fulfills properties \ref{P10} and \ref{P11}.
    
  Note now that $\cat{B}$ is an $\omega$-algebroidal subcategory of $(\overline{\Age(\bU)},{\injto})$. Let $J\colon   \cat{B}\to(\overline{\Age(\bU)},\injto)$ be the identical embedding functor. Then $G=P_1\circ J$. By assumption, $\bU$ is both, $(\overline{\Age(\bU)},\injto)$-universal and $(\Age(\bU),\injto)$-homogeneous. Thus, from Proposition~\ref{univhomsat} it follows that $\bU$ is $(\Age(\bU),{\injto})$-saturated. Now we may conclude from Proposition~\ref{p10transfer}, that $(P_n, G)$ has property \ref{P10}.  Clearly, $\cat{B}_{<\omega}$ has the $\JEP$ and the $\AP$. Now, from Proposition~\ref{homkomma}, it follows that $(P_n\komma G)$ has the $\AP$. Note that $(\emptyset,\emptyset,\bU)$ is  an initial object in $(P_n\komma G_3)_{<\omega}$. Hence, by Lemma~\ref{JEPAP}, $(P_n\komma G)_{<\omega}$ has the \JEP. Now, from Proposition~\ref{lambdaalg} together with Theorem~\ref{DroGoe} it follows that there exists an $(P_n\komma G)$-universal, $(P_n\komma G)_{<\omega}$-homogeneous object $(\bV, w,\bU)$. From Proposition~\ref{p11transfer} it follows that $(P_n, G)$ has property \ref{P11}. Since $P_n$ is faithful, from Proposition~\ref{firstobjsat} we conclude that $\bV$ is $(\Age(\bU),\injto)$-saturated. Since $\emptyset$ is initial in $(\Age(\bU),\injto)$, and since all morphisms of $(\Age(\bU),\injto)$ are monomorphisms, from Proposition~\ref{univhomsat} it follows that $\bV$ is $(\overline{\Age(\bU)},\injto)$-universal and $(\Age(\bU),\injto)$-homogeneous. In other words, $\bV$ is universal and homogeneous with the same age like $\bU$. Thus, from \Fraisse's Theorem, it follows that there is an isomorphism $h\colon   \bU\to \bV$. Now define $u:= w\circ P_n(h)$. Then $(h, 1_\bU)\colon   (\bU, u, \bU)\to (\bV,w,\bU)$ is an isomorphism in $(P_n\komma G)$. In particular, $(\bU,u,\bU)$ is $(P_n\komma G)$-universal and $(P_n\komma G)_{<\omega}$-homogeneous. By Observation~\ref{obscategories}, $u$ is an $n$-ary universal homogeneous polymorphism of $\bU$.

  ``$\Leftarrow$'': Suppose that $\bU$ has an $n$-ary universal homogeneous polymorphism $u$. Then, by Observation~\ref{obscategories} $(\bU,u,\bU)$ is $(P_n\komma G)$-universal, $(P_n\komma G)_{<\omega}$-homogeneous. Since $\Age(\bU)$ has the \AP and the \JEP, it follows from Proposition~\ref{homkomma} that $(P_n,G)$ has properties \ref{P9} and \ref{P10}. Moreover, since $\bU$ is homogeneous, it follows from Proposition~\ref{univhomsat}, that it is $(\Age(\bU),\injto)$-saturated. Since $P_n$ is faithful, from Proposition~\ref{firstobjsat} it follows that $(P_n,G)$ has property \ref{P11}.

  $\bU$ is universal. In other words, it is $(\overline{\Age(\bU)},\injto)$-universal. Note also that $\cat{B}$ is a $\lambda$-algebroidal subcategory of $(\overline{\Age(\bU)},\injto)$. Now, from Propositions~\ref{p10transfer} and \ref{p11transfer} it follows that $(P_n,P_1)$ has properties \ref{P10}, \ref{P11}. However, this is the same as to say that $\Age(\bU)$ has the \AEPn and the \HAPn.
\end{proof}

\subsection{Sufficient condition for the existence of universal homogeneous polymorphisms}
Though, Theorem~\ref{univhompoly} gives necessary and sufficient conditions for countable  homogeneous relational  structures to have universal homogeneous polymorphisms, unfortunately, these conditions are relatively difficult to verify. The goal of this section is to give sufficient conditions for the existence of universal homogeneous polymorphisms, that are somewhat easier to test.
\begin{definition}
A class $\class{C}$ of $\uSigma$-structures is said to have the \emph{strict amalgamation property} if $\class{C}$ has the amalgamation property and if for all $\bA,\bB_1,\bB_2\in\class{C}$, and for all embeddings $f_1\colon   \bA\injto\bB_1$, $f_2\colon   \bA\injto\bB_2$ there exists some $\bC\in\class{C}$ and homomorphisms $g_1\colon   \bB_1\to\bC$, $g_2\colon   \bB_2\to\bC$ such that the following is a pushout-square in $(\class{C},\rightarrow)$:
        \begin{equation}\label{strictAP}
        \begin{tikzcd}
                \bB_1 \rar[dashed]{g_1}& \bC\\
                \bA\pushout\uar[hook]{f_1}\rar[hook]{f_2}&  \bB_2\uar[dashed]{g_2}.
        \end{tikzcd}
        \end{equation}
        An age that has the strict amalgamation property is called a \emph{strict \Fraisse-class}.
\end{definition}
\begin{remark}
    The homomorphisms $g_1$ and $g_2$ in diagram \eqref{strictAP} are automatically embeddings, because $\class{C}$ has the amalgamation property. If $f_1$, $f_2$, $g_1$, $g_2$  are identical embeddings, then the structure $\bC$ will be denoted by $\bB_1\oplus_\bA \bB_2$ and will be called the \emph{amalgamated free sum} of $\bB_1$ and $\bB_2$ with respect to $\bA$.   

	Note also that every  \Fraisse class that has the free amalgamation property is also a strict \Fraisse class.	
	Examples for strict \Fraisse classes without the free amalgamation property are given by the class of finite posets, the class of finite strict posets, and the class of non-empty metric spaces with rational distances.
\end{remark}

\begin{definition}
  Let $\class{C}$ be a class of $\uSigma$-structures closed under finite products and enjoying the strict amalgamation property.  We say that $\class{C}$ has
  \emph{well-behaved amalgamated free sums} if for all pushout-diagrams 
\[
  \begin{tikzcd}
    \bB_1 \rar{\kappa_{\bB_1}} & \bB_1\oplus_{\bA_1}\bC_1 & \bB_2 \rar{\kappa_{\bB_2}} & \bB_2\oplus_{\bA_2}\bC_2 \\
    \bA_1\pushout\uar[hook]{=}\rar[hook]{=}& \bC_1\uar{\kappa_{\bC_1}} & \bA_2\pushout\uar[hook]{=}\rar[hook]{=}& \bC_2\uar{\kappa_{\bC_2}} 
  \end{tikzcd}
\]
\[
  \begin{tikzcd}
    \bB_1\times\bB_2 \rar{\kappa_{\bB_1\times\bB_2}}&
    (\bB_1\times\bB_2)\oplus_{\bA_1\times\bA_2}(\bC_1\times\bC_2)\\
    \bA_1\times\bA_2\pushout\uar[hook]{=}\rar[hook]{=}& \bC_1\times\bC_2\uar{\kappa_{\bC_1\times\bC_2}}
  \end{tikzcd}
\]
in $(\class{C},\to)$, the unique homomorphism  $h\colon  (\bB_1\times\bB_2)\oplus_{\bA_1\times\bA_2}(\bC_1\times\bC_2)\to(\bB_1\oplus_{\bA_1}\bC_1)\times (\bB_2\oplus_{\bA_2}\bC_2)$ that makes
the following diagram commutative
\[
\begin{tikzcd}
  ~ & &\hspace*{-2.5cm} (\bB_1\oplus_{\bA_1}\bC_1)\times (\bB_2\oplus_{\bA_2}\bC_2)\\
\bB_1\times\bB_2 \rar{\kappa_{\bB_1\times\bB_2}}\arrow[bend
left]{urr}[sloped, near end]{\kappa_{\bB_1}\times\kappa_{\bB_2}} & (\bB_1\times\bB_2)\oplus_{\bA_1\times\bA_2}(\bC_1\times\bC_2)
\urar[dashed]{h}\\
\bA_1\times\bA_2
\uar[hook]{=}\rar[hook]{=}\pushout
& \bC_1\times\bC_2,\uar{\kappa_{\bC_1\times\bC_2}}
\arrow[bend right]{uur}[swap,sloped,near start]{\kappa_{\bC_1}\times\kappa_{\bC_2}}
\end{tikzcd}
\]
is an embedding. 
\end{definition}

\begin{lemma}\label{wellbeh}
    Let $\class{C}$ be a class of $\uSigma$-structures with the strict amalgamation property, that is closed under finite products. Suppose further that $\class{C}$ has well-behaved amalgamated free sums. Given a pushout square
    \[
    \begin{tikzcd}
    \bB_1\rar[hook]{g_1} & \bC \\
    \bA \pushout\uar[hook]{=}\rar[hook]{=}& \bB_2.\uar[hook]{g_2}
    \end{tikzcd}
    \]
    Consider the pushout square 
  \begin{equation*}\label{npushout}
  \begin{tikzcd}
    \bB_1^n \rar[hook]{\hat{g}_1}& \widehat\bC\\
    \bA^n\pushout\uar[hook]{=}\rar[hook,swap]{=}& \bB_2^n.\uar[hook]{\hat{g}_2}
  \end{tikzcd}
  \end{equation*}
    Then the unique mediating morphism $k\colon   \widehat\bC\to\bC^n$, that makes the following diagram commutative:
 \begin{equation}\label{nmediating}
\begin{tikzcd}
    ~& & \bC^n \\
    \bB_1^n \arrow[bend left,hook]{urr}{g_1^n}\rar[hook]{\hat{g}_1}& \widehat\bC\urar{k}\\
    \bA^n\pushout \uar[hook]{=}\rar[hook,swap]{=}& \bB_2^n \arrow[bend right,hook]{uur}[swap]{g_2^n}\uar[hook]{\hat{g}_2}
  \end{tikzcd}
  \end{equation}
    is an embedding. 
\end{lemma}
\begin{proof}
    We proceed by induction on $n$. The case $n=1$ is immediate.
    
    Suppose the claim is true for some given $n$. 
    %Consider the  pushout square from \eqref{npushout}.
%  \[
%  \begin{tikzcd}
%    \bB_1^{n} \rar[hook]{\tilde{g}_1}& \widetilde\bC\\
%    \bA^{n}\pushout \uar[hook]{=}\rar[hook,swap]{=}& \bB_2^{n}.\uar[hook]{\tilde{g}_2}
%  \end{tikzcd}
%  \]
   By induction hypothesis, the unique mediation arrow $k$ in \eqref{nmediating} is an embedding.
% \[
%      \begin{tikzcd}
%    ~& & \bC^n \\
%    \bB_1^n \arrow[bend left,hook]{urr}{ g_1^n}\rar[hook]{\tilde{g}_1}& \widetilde\bC\urar[hook,dashed]{\tilde{k}}\\
%    \bA^n\pushout \uar[hook]{=}\rar[hook,swap]{=}& \bB_2^n.\arrow[bend
%    right,hook]{uur}[swap]{g_2^n}\uar[hook]{\tilde{g}_2}
%  \end{tikzcd}
%  \]
 Since $\class{C}$ has well-behaved amalgamated free sums, the mediating arrow $\tilde{k}$ in the following diagram is an embedding, too:
 \[
      \begin{tikzcd}
    ~& & \widehat\bC\times\bC \\
    \bB_1^{n+1} \arrow[bend left,hook]{urr}{\hat{g}_1\times g_1}\rar[hook]{\tilde{g}_1}& \widetilde\bC\urar[hook,dashed]{\tilde{k}}\\
    \bA^{n+1}\pushout\uar[hook]{=}\rar[hook,swap]{=}& \bB_2^{n+1}.\arrow[bend
    right,hook]{uur}[swap]{\hat{g}_2\times g_2}\uar[hook]{\tilde{g}_2}
  \end{tikzcd}
  \]
  We conclude that then the following diagram commutes:
  \[
  \begin{tikzcd}
      ~ & & & \bC^{n+1}\\
      & & \widehat\bC\times\bC\urar[hook,swap,near start]{k\times 1_{\bC}}\\ 
    \bB_1^{n+1} \arrow[bend left,out=45,hook]{uurrr}{g_1^{n+1}}\arrow[bend left,hook]{urr}[near end]{\hat{g}_1\times g_1}\rar[hook]{\tilde{g}_1}& \widetilde\bC\urar[hook]{\tilde{k}}\\
    \bA^{n+1}\pushout \uar[hook]{=}\rar[hook,swap]{=}& \bB_2^{n+1}.\arrow[bend right,in=-135,hook]{uuurr}[swap]{g_2^{n+1}}\arrow[bend
    right,hook]{uur}[swap,near end]{\hat{g}_2\times g_2}\uar[hook]{\tilde{g}_2}
  \end{tikzcd}
\]
  Hence $k':= (k\times 1_{\bC})\circ\tilde{k}$ is the unique mediating morphism that makes the following diagram commutative:
   \[
      \begin{tikzcd}
    ~& & \bC^{n+1} \\
    \bB_1^{n+1} \arrow[bend left,hook]{urr}{g_1^{n+1}}\rar[hook]{\tilde{g}_1}& \widehat\bC\urar{k'}\\
    \bA^{n+1}\pushout \uar[hook]{=}\rar[hook,swap]{=}& \bB_2^{n+1}.\arrow[bend
    right,hook]{uur}[swap]{g_2^{n+1}}\uar[hook]{\tilde{g}_2}
  \end{tikzcd}
  \]
    Moreover, since both, $k\times 1_{\bC}$ and $\tilde{k}$ are embeddings, we have that $k'$ is an embedding, too.
\end{proof}

\begin{proposition}\label{UH6condAlg}
  Let $\class{C}$ be a class of $\uSigma$-structures such that
  \begin{enumerate}
  \item $\class{C}$ has the strict amalgamation property,
  \item $\class{C}$ is closed with respect to finite products,
  \item $\class{C}$ has well-behaved amalgamated free sums,
  \item $\class{C}$ has the \HAP.
  \end{enumerate}
  Then $\class{C}$ has the \AEPn, for every $n\in\bN\setminus\{0\}$.
\end{proposition}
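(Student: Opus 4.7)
The plan is to construct the required amalgam in three stages. First, form the strict amalgamation $\bC := \bB_1 \oplus_\bA \bB_2$, obtaining embeddings $g_i\colon \bB_i \injto \bC$ with $g_1 \circ f_1 = g_2 \circ f_2$; this lies in $\class{C}$ by the strict amalgamation property. Second, form the analogous pushout of the $n$-th powers: since $\class{C}$ is closed under finite products, $\bA^n, \bB_1^n, \bB_2^n \in \class{C}$, so we can set $\widehat{\bC} := \bB_1^n \oplus_{\bA^n} \bB_2^n \in \class{C}$, with embeddings $\hat{g}_i\colon \bB_i^n \injto \widehat{\bC}$. Since $h_1 \circ f_1^n = h_2 \circ f_2^n$, the universal property of the pushout yields a unique homomorphism $\tilde{h}\colon \widehat{\bC} \to \bT$ with $\tilde{h} \circ \hat{g}_i = h_i$ for $i \in \{1,2\}$.

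Third, I invoke Lemma~\ref{wellbeh}: because $\class{C}$ has well-behaved amalgamated free sums, the unique mediating morphism $k_0 \colon \widehat{\bC} \to \bC^n$ satisfying $k_0 \circ \hat{g}_i = g_i^n$ is in fact an \emph{embedding}. Now I apply the \HAP to the span $\bC^n \xhookleftarrow{\,k_0\,} \widehat{\bC} \xrightarrow{\tilde{h}} \bT$, with $\bC^n, \widehat{\bC}, \bT \in \class{C}$. This produces some $\bT' \in \class{C}$, an embedding $k\colon \bT \injto \bT'$, and a homomorphism $h\colon \bC^n \to \bT'$ such that $h \circ k_0 = k \circ \tilde{h}$.

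It remains to verify the commutativity required by $\AEPn$. For $i \in \{1,2\}$, we compute
\[
h \circ g_i^n = h \circ k_0 \circ \hat{g}_i = k \circ \tilde{h} \circ \hat{g}_i = k \circ h_i,
\]
as desired. Together with $g_1 \circ f_1 = g_2 \circ f_2$ from the first step, this shows that $\bC, \bT', g_1, g_2, h, k$ witness the $\AEPn$ for the given data.

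The conceptual obstacle is packaged entirely into Lemma~\ref{wellbeh}: without knowing that the canonical comparison $\widehat{\bC} \to \bC^n$ is an embedding, the application of \HAP in the third step would be unavailable, since \HAP requires the left arrow of the span to be an embedding. Everything else is a routine diagram chase combining the universal properties of the two pushouts with the commutativity square produced by \HAP.
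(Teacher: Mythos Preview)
Your proof is correct and follows essentially the same approach as the paper's: form the two pushouts $\bC$ and $\widehat{\bC}$, use Lemma~\ref{wellbeh} to see that the comparison map $\widehat{\bC}\to\bC^n$ is an embedding, then apply the \HAP to that embedding together with the induced map $\widehat{\bC}\to\bT$, and finish with the same diagram chase. The only differences are notational (your $k_0,\tilde{h},k,h$ correspond to the paper's $k,h,\hat{k},\hat{h}$).
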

\begin{proof}
  Let $\bA,\bB_i, \bT\in\class{C}$, $f_i\colon   \bA\injto\bB_i$, $h_i\colon   \bB_i^n\to \bT$ (where $i\in\{1,2\}$), with $h_1\circ f_1^n= h_2\circ  f_2^n$.
    
  Let $\bC\in\class{C}$, $g_1\colon   \bB_1\injto \bC$, $g_2\colon   \bB_2\injto \bC$ such that the following is a pushout-square in $(\class{C},\to)$:
  \[
  \begin{tikzcd}
    \bB_1 \rar[hook]{g_1}& \bC\\
    \bA\pushout \uar[hook]{f_1}\rar[hook]{f_2}&
    \bB_2.\uar[hook]{g_2}
  \end{tikzcd}
  \]
  Since $\class{C}$ is closed with respect to finite products, $\bA^n$, $\bB_1^n$, $\bB_2^n$ are in $\class{C}$. Since $\class{C}$ has the strict amalgamation property, there exists $\widehat\bC\in\class{C}$, $\hat{g}_1\colon   \bB_1^n\injto\widehat\bC$, $\hat{g}_2\colon   \bB_2^n\injto\widehat\bC$ such that the following is a pushout-square in $(\class{C},\to)$:
  \[
  \begin{tikzcd}
    \bB_1^n \rar[hook]{\hat{g}_1}& \widehat\bC\\
    \bA^n\pushout \uar[hook]{f_1^n}\rar[hook,swap]{
      f_2^n}& \bB_2^n.\uar[hook]{\hat{g}_2}
  \end{tikzcd}
  \]
  Hence, there exists $k\colon   \widehat\bC\to\bC^n$ such that the following diagram commutes:
  \begin{equation}\label{star3}
  \begin{tikzcd}
    ~& & \bC^n \\
    \bB_1^n \arrow[bend left,hook]{urr}{g_1^n}\rar[hook]{\hat{g}_1}& \widehat\bC\urar{k}\\
    \bA^n \uar[hook]{f_1^n}\rar[hook,swap]{
      f_2^n}& \bB_2^n.\arrow[bend
    right,hook]{uur}[swap]{g_2^n}\uar[hook]{\hat{g}_2}
  \end{tikzcd}
  \end{equation}
  Moreover, by Lemma~\ref{wellbeh}, $k$ is an embedding.
  
  Next we note that there exists $h\colon \widehat\bC\to\bT$ such that the following diagram commutes:
  \begin{equation}\label{star1}
  \begin{tikzcd}
    ~& & \bT \\
    \bB_1^n \arrow[bend left]{urr}{h_1}\rar[hook]{\hat{g}_1}& \widehat\bC\urar{h}\\
    \bA^n \uar[hook]{f_1^n}\rar[hook,swap]{
      f_2^n}& \bB_2^n.\arrow[bend
    right]{uur}[swap]{h_2}\uar[hook]{\hat{g}_2}
  \end{tikzcd}
  \end{equation}
  Since $\class{C}$ has the \HAP, there exist $\hat{k}\colon   \bT\injto \bT'$, and a homomorphism $\hat{h}\colon \bC^n\to\bT'$ such that the following diagram commutes:
  \begin{equation}\label{star2}
  \begin{tikzcd}
    \bC^n\rar[dashed]{\hat{h}} & \bT'\\
    \widehat{\bC} \uar[hook]{k}\rar{h}& \bT\uar[hook,dashed]{\hat{k}}.
  \end{tikzcd}
  \end{equation}
It remains to observe that the following diagram commutes:
 \begin{equation}\label{star4}
  \begin{tikzcd}
%     ~  & & &\bT'\\
     & & \bT\rar[hook][swap]{\hat{k}}&\bT'\\
     \bB_1^n
    \arrow[bend left]{urr}{h_1} \rar[hook]{g_1^n}
    & \bC^n\arrow[bend right]{urr}[swap,near end]{\hat{h}}\\
    \bA^n
    \uar[hook]{f_1^n}\rar[hook]{f_2^n} & 
    \bB_2^n. \arrow[bend right]{uur}[swap]{h_2}
    \uar[swap,hook]{g_2^n}
  \end{tikzcd}
  \end{equation}
  Indeed, we compute:
  \begin{equation*}
      \hat{k}\circ h_1 \stackrel{\eqref{star1}}{=} \hat{k}\circ h\circ\hat{g}_1\stackrel{\eqref{star2}}{=} \hat{h}\circ k\circ\hat{g}_1\stackrel{\eqref{star3}}{=} \hat{h}\circ g_1^n.
  \end{equation*}
  Analogously the identity $\hat{k}\circ h_2=\hat{h}\circ g_2^n$ may be shown. From these two identities it follows that diagram \eqref{star4} commutes. Hence $\class{C}$ has the \AEPn, for every $n\in\bN\setminus\{0\}$. 
\end{proof}

\section{Structures with universal homogeneous polymorphisms}
\subsection{Free homogeneous structures} 

    Let $\uSigma$ be a relational signature and let $\cat{C}_\uSigma$ be the category of all $\uSigma$-structures with homomorphisms as morphisms.
\begin{lemma}\label{weakpushouts}
    Let $n\in\bN\setminus\{0\}$, and for each $i\in\{1,2\}$, let $\bA,\bB_i,\bC\in\cat{C}_\uSigma$, $f_i\colon \bA\injto\bB_i$, $g_i\colon \bB_i\injto\bC$, such that the following is a pushout-square in $\cat{C}_\uSigma$:
  \[
  \begin{tikzcd}
    \bB_1 \rar[hook]{g_1}& \bC \\
    \bA\pushout\uar[hook]{f_1}\rar[hook]{f_2} & \bB_2.\uar[hook]{g_2}
  \end{tikzcd}
  \]
  Then the following is a weak pushout-square in $\cat{C}_\uSigma$:
  \[
  \begin{tikzcd}
    \bB_1^n \rar[hook]{g_1^n}& \bC^n \\
    \bA^n\uar[hook]{f_1^n}\rar[hook]{f_2^n} & \bB_2^n.\uar[hook]{g_2^n}
  \end{tikzcd}
  \]
\end{lemma}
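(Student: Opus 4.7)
The plan is to verify the weak pushout property by constructing a (non-unique) mediating homomorphism for any cocone into $\bD$. First, I would invoke the paper's Construction of the amalgamated free sum to assume, without loss of generality, that $f_i,g_i$ are inclusions, so that $\bC=\bB_1\oplus_\bA\bB_2$ with $C=B_1\cup B_2$, $A=B_1\cap B_2$, and $\varrho^\bC=\varrho^{\bB_1}\cup\varrho^{\bB_2}$ for every $\varrho\in\Sigma$.

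Second, I would analyse the relational structure of $\bC^n$: a tuple $(\tc_1,\ldots,\tc_k)$ lies in $\varrho^{\bC^n}$ iff each column $(c_{1,i},\ldots,c_{k,i})$ sits entirely inside $B_1$ and is in $\varrho^{\bB_1}$, or entirely inside $B_2$ and is in $\varrho^{\bB_2}$. The key observation is that a column lying in $A^k$ is in $\varrho^{\bB_1}$ iff in $\varrho^\bA$ iff in $\varrho^{\bB_2}$, by the embedding property of $f_1,f_2$. Hence every tuple of $\varrho^{\bC^n}$ admits a \emph{side-assignment} $\sigma\colon\{1,\ldots,n\}\to\{1,2\}$ with column $i$ in $\varrho^{\bB_{\sigma(i)}}$; individual columns are pure even though rows may be mixed.

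Third, given a cocone $(h_1,h_2)$ into $\bD$ with $h_1\circ f_1^n=h_2\circ f_2^n$, I would define $h\colon\bC^n\to\bD$ piecewise. On pure tuples $\tc\in B_1^n$ set $h(\tc):=h_1(\tc)$, and on $\tc\in B_2^n$ set $h(\tc):=h_2(\tc)$; these are consistent on $A^n$ by the cocone compatibility. On the remaining row-mixed tuples I would make a uniform fallback choice using a fixed reference element $a_0\in A$, replacing each out-of-side coordinate by $a_0$ to obtain a pure lift and then applying $h_1$ or $h_2$. The case $A=\emptyset$ is immediate: then no row-mixed tuple can belong to any relation of $\bC^n$, so arbitrary values work. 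Checking $h\circ g_i^n=h_i$ is direct from the definition, and the homomorphism property reduces column-by-column to applying $h_1$ or $h_2$ to a tuple in $\varrho^{\bB_1^n}$ respectively $\varrho^{\bB_2^n}$, with overlap columns handled by $h_1\circ f_1^n=h_2\circ f_2^n$ together with the embedding conditions on $f_i$.

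The main obstacle is the mixed-row case: the fallback must globally assemble into a homomorphism despite the freedom of choice. The technical core is that, by the column-wise description of $\varrho^{\bC^n}$, within any relational instance the rows must column-compatible with a fixed side-assignment $\sigma$, so the lift-and-push-forward performed by $h$ factors coherently through either $\bB_1^n\xrightarrow{h_1}\bD$ or $\bB_2^n\xrightarrow{h_2}\bD$; the verification thus amounts to a careful but ultimately routine case distinction on how each row intersects the sides $B_1\setminus A$, $B_2\setminus A$, and $A$.
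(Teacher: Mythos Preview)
Your overall architecture matches the paper's: reduce to the amalgamated free sum, observe that every column of a relation tuple in $\bC^n$ is pure (in $B_1$ or in $B_2$), and build the mediating map by cases on whether a row is pure or mixed. The difference, and the gap, is in your treatment of the mixed rows.

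Your fallback --- replacing each out-of-side coordinate by a fixed $a_0\in A$ --- does not in general produce a homomorphism. Take $\Sigma=\{\varrho\}$ binary, $A=\{a\}$, $B_1=\{a,b\}$, $B_2=\{a,c\}$, with $\varrho^{\bA}=\emptyset$, $\varrho^{\bB_1}=\{(b,b)\}$, $\varrho^{\bB_2}=\{(c,c)\}$, and $n=2$. Then $((b,c),(b,c))\in\varrho^{\bC^2}$ since both columns $(b,b)$ and $(c,c)$ lie in $\varrho^\bC$. Both rows equal the mixed tuple $(b,c)$; your rule sends each to $h_1(b,a_0)=h_1(b,a)$. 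For $h$ to be a homomorphism you would need $((b,a),(b,a))\in\varrho^{\bB_1^2}$, hence $(a,a)\in\varrho^{\bB_1}$, which fails. The problem is structural: a column of type $B_2$ carries no information about $\varrho^{\bB_1}$, so after substituting $a_0$ into that column there is no reason for the resulting column to satisfy $\varrho^{\bB_1}$.

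The paper's fix is different and is the crux of the argument: on a mixed row $(x_1,\dots,x_n)$ it does not substitute a constant, but instead collapses the row to the \emph{diagonal} of its first coordinate, sending it to $h_1(x_1,\dots,x_1)$ if $x_1\in B_1$ (and symmetrically with $h_2$ if $x_1\in B_2$). The point is that when you then check a relation tuple in $\varrho^{\bC^n}$, every column of the lifted matrix is a copy of the \emph{first} column of the original matrix, which is already known to lie in $\varrho^{\bB_1}$ (respectively $\varrho^{\bB_2}$). That is what makes the homomorphism verification go through; your $a_0$-substitution lacks this feature. Once you replace your fallback by the diagonal trick, the rest of your outline (including the $A=\emptyset$ remark and the column-wise case analysis) is correct and essentially the paper's proof.
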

\begin{proof}
  Let $\widehat\bC\in\cat{C}_\uSigma$, $\hat{g_i}\colon \bB_{i}^n\injto\widehat\bC$ (for $i\in\{1,2\}$), such that the following is a pushout-square in $\cat{C}_\uSigma$.
  \[
  \begin{tikzcd}
    \bB_1^n \rar[hook]{\hat g_1}& \widehat\bC \\
    \bA^n\pushout\uar[hook]{f_1^n}\rar[hook]{f_2^n} & \bB_2^n.\uar[hook]{\hat
      g_2}
  \end{tikzcd}
  \]
  It remains to construct a homomorphism $h\colon \bC^n\to\widehat\bC$ such that the following diagram commutes:
  \begin{equation}\label{diagpushout}
    \begin{tikzcd}
      ~& & \widehat\bC\\
      \bB_1^n \arrow[bend left]{urr}{\hat{g}_1}\rar[hook]{g_1^n}& \bC^n\urar[dashed]{h} \\
      \bA^n\uar[hook]{f_1^n}\rar[hook]{f_2^n} & \bB_2^n.\arrow[bend
      right]{uur}[swap]{\hat{g}_2}\uar[hook]{g_2^n}
    \end{tikzcd}
  \end{equation} 
  We define
  \[
  h(x_1,\dots,x_n):=\begin{cases}
    \hat{g}_1(u_1,\dots,u_n) & (x_1,\dots,x_n)=(g_1(u_1),\dots,g_1(u_n))\\
    \hat{g}_2(v_1,\dots,v_n) & (x_1,\dots,x_n)=(g_2(v_1),\dots,g_2(v_n))\\
    \hat{g}_1(u_1,\dots,u_1) & \text{else, if }g_1(u_1)=x_1\\
    \hat{g}_2(v_1,\dots,v_1) & \text{else, if }g_2(v_1)=x_1.
  \end{cases}
  \]
  It remains to show that $h$ is well-defined and a homomorphism. Suppose, that
  \[(g_2(v_1),\dots,g_2(v_n))=(x_1,\dots,x_n)=(g_1(u_1),\dots,g_1(u_n)).\]
  Since $\bC$ is the free amalgamated sum of $g_1(\bB_1)$ with $g_2(\bB_2)$ with respect to $g_1(f_1(\bA))$, there exist $(a_1,\dots,a_n)\in A^n$, such that $(f_1(a_1),\dots,f_1(a_n))=(u_1,\dots,u_n)$ and $(f_2(a_1),\dots,f_2(a_n))=(v_1,\dots,v_n)$. But since $\hat{g}_1\circ f_1^n=\hat{g}_2\circ f_2^n$, we obtain
  \begin{equation*}
    \hat{g}_1(u_1,\dots,u_n)=\hat{g}_1(f_1(a_1),\dots, f_1(a_n))= \hat{g}_2(f_2(a_1),\dots, f_2(a_n))= \hat{g}_2(v_1,\dots,v_n).
  \end{equation*}
  If neither 
  $(x_1,\dots,x_n)=(g_1(u_1),\dots,g_1(u_n))$, nor $(x_1,\dots,x_n)=(g_2(v_1),\dots,g_2(v_n))$,
   but $g_1(u_1)=x_1=g_2(v_1)$, then, since $g_1(B_1)\cap g_2(B_2)=g_1(f_1(A))=g_2(f_2(A))$, there exists $a_1\in A$ such that $f_1(a_1)=u_1$, and $f_2(a_1)=v_1$. Hence,
  \begin{equation*}
    \hat{g}_1(u_1,\dots,u_1)=\hat{g}_1(f_1(a_1),\dots, f_1(a_1))= \hat{g}_2(f_2(a_1),\dots, f_2(a_1))= \hat{g}_2(v_1,\dots,v_1).
  \end{equation*}
  Thus, $h$ is well-defined.
    
  Let $\varrho$ be a relational symbol of arity $m$ from $\Sigma$, and let $(\ta_1,\dots,\ta_m)\in\varrho^{\bC^n}$, where 
  \[\ta_i=(a_{i,1},\dots,a_{i,n}) \text{ (for $i\in\{1,\dots,m\}$)}.\]
   Then  we have that $(a_{1,j},\dots,a_{m,j})$ is in $\varrho^\bC$, for each  $j\in\{1,\dots,n\}$. Since $\varrho^\bC=g_1(\varrho^{\bB_1})\cup g_2(\varrho^{\bB_2})$, for every $j\in\{1,\dots,n\}$ we have $(a_{1,j},\dots,a_{m,j})\in g_1(\varrho^{\bB_1})$ or $(a_{1,j},\dots,a_{m,j})\in g_2(\varrho^{\bB_2})$.
    
  Suppose that for every $j\in\{1,\dots,n\}$ there exists $(u_{1,j},\dots,u_{m,j})\in\varrho^{\bB_1}$, such that 
  \[(a_{1,j},\dots,a_{m,j})=(g_1(u_{1,j}),\dots,g_1(u_{m,j})),\]
   then we have
  \[ \left[\begin{array}{c}
      h(a_{1,1},\dots,a_{1,n})\\
      \vdots\\
      h(a_{m,1},\dots,a_{m,n})
    \end{array}\right]=\left[\begin{array}{c}
      \hat{g}_1(u_{1,1},\dots,u_{1,n})\\
      \vdots\\
      \hat{g}_1(u_{m,1},\dots,u_{m,n})
    \end{array}\right]\in\varrho^{\widehat{\bC}},
  \]
  since $\hat{g}_1$ is a homomorphism.

  Analogously, if for every $j\in\{1,\dots,n\}$ there exists $(v_{1,j},\dots,v_{m,j})\in\varrho^{\bB_2}$, such that
  \[(a_{1,j},\dots,a_{m,j})=(g_2(v_{1,j}),\dots,g_2(v_{m,j})),\]
   then we have
  \[ \left[\begin{array}{c}
      h(a_{1,1},\dots,a_{1,n})\\
      \vdots\\
      h(a_{m,1},\dots,a_{m,n})
    \end{array}\right]=\left[\begin{array}{c}
      \hat{g}_2(v_{1,1},\dots,v_{1,n})\\
      \vdots\\
      \hat{g}_2(v_{m,1},\dots,v_{m,n})
    \end{array}\right]\in\varrho^{\widehat{\bC}},
  \]
  since $\hat{g}_2$ is a homomorphism.

  Otherwise, if there exists $(u_1,\dots,u_m)\in\varrho^{\bB_1}$, such that 
  \[(a_{1,1},\dots,a_{m,1})= (g_1(u_1),\dots,g_1(u_m)),\]
  then
  \[ \left[\begin{array}{c}
      h(a_{1,1},\dots,a_{1,n})\\
      \vdots\\
      h(a_{m,1},\dots,a_{m,n})
    \end{array}\right]=\left[\begin{array}{c}
      \hat{g}_1(u_1,\dots,u_1)\\
      \vdots\\
      \hat{g}_1(u_m,\dots,u_m)
    \end{array}\right]\in\varrho^{\widehat{\bC}},
  \]
  and if there exists $(v_1,\dots,v_m)\in\varrho^{\bB_2}$, such that $(a_{1,1},\dots,a_{m,1})= (g_2(v_1),\dots,g_2(v_m))$, then
  \[ \left[\begin{array}{c}
      h(a_{1,1},\dots,a_{1,n})\\
      \vdots\\
      h(a_{m,1},\dots,a_{m,n})
    \end{array}\right]=\left[\begin{array}{c}
      \hat{g}_2(v_1,\dots,v_1)\\
      \vdots\\
      \hat{g}_2(v_m,\dots,v_m)
    \end{array}\right]\in\varrho^{\widehat{\bC}}.
  \]
  Thus, $h$ is a homomorphism.

  By construction of $h$ we have that diagram~\eqref{diagpushout} commutes. Thus, the proof is complete.
\end{proof}

\begin{proposition}\label{freeaepn}
  Let $\bU$ be a countably infinite homogeneous relational structure whose age has the free amalgamation property.  Then $\Age(\bU)$ has the $\AEPn$, for every $n\in\bN\setminus\{0\}$.
\end{proposition}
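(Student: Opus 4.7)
The plan is to obtain $\AEPn$ essentially for free from the free amalgamation property combined with Lemma~\ref{weakpushouts}, by taking the amalgamated target $\bT'$ to simply be $\bT$ itself.

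Given data $\bA, \bB_1, \bB_2, \bT \in \Age(\bU)$, embeddings $f_i\colon \bA \injto \bB_i$, and homomorphisms $h_i\colon \bB_i^n \to \bT$ with $h_1\circ f_1^n = h_2\circ f_2^n$, I would first form $\bC := \bB_1 \oplus_\bA \bB_2$ with identical embeddings $g_i\colon \bB_i \injto \bC$. Since $\Age(\bU)$ has the free amalgamation property, $\bC \in \Age(\bU)$, and the square with $\bA, \bB_1, \bB_2, \bC$ is a pushout square in $\cat{C}_\uSigma$.

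Next I would invoke Lemma~\ref{weakpushouts} to conclude that the $n$-th power square
\[
\begin{tikzcd}
    \bB_1^n \rar[hook]{g_1^n}& \bC^n \\
    \bA^n\uar[hook]{f_1^n}\rar[hook]{f_2^n} & \bB_2^n\uar[hook]{g_2^n}
\end{tikzcd}
\]
is a \emph{weak} pushout in $\cat{C}_\uSigma$. The hypothesis $h_1\circ f_1^n = h_2\circ f_2^n$ says precisely that $(\bT, h_1, h_2)$ is a compatible cocone on this span, so by the weak pushout property there exists a homomorphism $\hat h\colon \bC^n \to \bT$ satisfying $\hat h \circ g_i^n = h_i$ for $i\in\{1,2\}$.

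Finally I would set $\bT' := \bT$ and $k := 1_\bT$, which is trivially an embedding in $\Age(\bU)$; then the two commutative diagrams demanded by the definition of $\AEPn$ hold by construction. There is no real obstacle here: the content is already packaged in Lemma~\ref{weakpushouts}, and the key observation is that, unlike in Proposition~\ref{UH6condAlg}, we do not need $\Age(\bU)$ to be closed under finite products nor to enjoy the $\HAP$, because we can simply keep the target structure unchanged ($k$ is the identity) rather than having to enlarge $\bT$ into some $\bT'$ that accommodates a homomorphism from the honest pushout $\bB_1^n \oplus_{\bA^n} \bB_2^n$.
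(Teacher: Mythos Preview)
Your proposal is correct and follows essentially the same approach as the paper: form the free amalgam $\bC=\bB_1\oplus_\bA\bB_2$, invoke Lemma~\ref{weakpushouts} to get a weak pushout on the $n$-th powers, use the cocone $(\bT,h_1,h_2)$ to obtain $h\colon\bC^n\to\bT$, and take $\bT':=\bT$ with $k:=1_\bT$. The paper adds only the cosmetic WLOG that $f_1,f_2$ are identical embeddings with $B_1\cap B_2=A$; your closing remark about why neither product-closure nor the \HAP is needed is exactly the point.
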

\begin{proof}
   Let $n\in\bN\setminus\{0\}$. 
    
   Given $\bA, \bB_1, \bB_2,\bT\in\Age(\bU)$, $f_1\colon   \bA\injto\bB_1$, $f_2\colon   \bA\injto\bB_2$, $h_1\colon   \bB_1\to\bT$, $h_2\colon   \bB_2\to\bT$, such that $h_1\circ f_1=h_2\circ f_2$. Without loss of generality, $f_1$ and $f_2$ are identical embeddings and $B_1\cap B_2=A$.  

  Let $\bC:=\bB_1\oplus_{\bA}\bB_2$, in other words, the following is a pushout-square in $\cat{C}_\uSigma$:
  \begin{equation*}
  \begin{tikzcd}
    \bB_1 \rar[hook]{=}& \bC \\
    \bA\pushout  \uar[hook]{=}\rar[hook]{=}&\bB_2\uar[hook]{=}.
  \end{tikzcd}
  \end{equation*}
  By Lemma~\ref{weakpushouts}, the following is a weak pushout square in $\cat{C}_\uSigma$:
\[
  \begin{tikzcd}
    \bB_1^n \rar[hook]{=}& \bC^n \\
    \bA^n  \uar[hook]{=}\rar[hook]{=}&\bB_2^n\uar[hook]{=}.
  \end{tikzcd}
\]
Hence there exists some $h\colon   \bC^n\to \bT$ such that the following diagram commutes:
  \[
  \begin{tikzcd}
    ~& & \bT \\
    \bB_1^n \arrow[bend left]{urr}{h_1}\rar[hook]{=}& \bC^n\urar{h}\\
    \bA^n \uar[hook]{=}\rar[hook,swap]{=}& \bB_2^n\arrow[bend
    right]{uur}[swap]{h_2}\uar[hook]{=}
  \end{tikzcd}
  \]
    Taking $\bT':=\bT$, we obtain, that the following diagram commutes, too:
 \[
  \begin{tikzcd}
%     ~  & & &\bT'\\
     & & \bT\rar[hook][swap]{=}& \bT'\\
     \bB_1^n
    \arrow[bend left]{urr}{h_1} \rar[hook]{=}
    & \bC^n\arrow[bend right]{urr}[swap,near end]{h}\\
    \bA^n
    \uar[hook]{=}\rar[hook]{=} & 
    \bB_2^n. \arrow[bend right]{uur}[swap]{h_2}
    \uar[swap,hook]{=}
  \end{tikzcd}
  \]
    Thus, $\Age(\bU)$  has the \AEPn. 
\end{proof}

\begin{corollary}\label{univpolyfree}
  Let $\bU$ be a countably infinite homogeneous relational structure whose age has the free amalgamation property. Let $n\in\bN\setminus\{0\}$. Then $\bU$ has an $n$-ary universal homogeneous polymorphism if and only if $\Age(\bU)$ has the $\HAPn$. 
\end{corollary}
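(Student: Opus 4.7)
The plan is to derive this statement directly from the two preceding results, \textbf{Theorem~\ref{univhompoly}} and \textbf{Proposition~\ref{freeaepn}}, so essentially no new work is required.

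For the ``only if'' direction, I will simply invoke \textbf{Theorem~\ref{univhompoly}}: if $\bU$ has an $n$-ary universal homogeneous polymorphism, then $\Age(\bU)$ has both the \AEPn and the \HAPn; discarding the former gives the \HAPn.

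For the ``if'' direction, I want to show that the hypotheses of \textbf{Theorem~\ref{univhompoly}} are met. Since $\bU$ is countably infinite, homogeneous, and its age has the free amalgamation property, $\Age(\bU)$ is a \Fraisse-class. By assumption we already have the \HAPn, so the only missing ingredient is the \AEPn. This is exactly what \textbf{Proposition~\ref{freeaepn}} supplies: a free amalgamation age has the \AEPn for every $n\in\bN\setminus\{0\}$. Combining both properties, \textbf{Theorem~\ref{univhompoly}} yields the existence of an $n$-ary universal homogeneous polymorphism of $\bU$.

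There is no genuine obstacle here; the work has already been done upstream. The only thing to be careful about is simply citing the correct direction of \textbf{Theorem~\ref{univhompoly}} on each side of the equivalence, and noting that the free amalgamation property is what feeds \textbf{Proposition~\ref{freeaepn}} for the reverse implication. Thus the proof is a single sentence referring to these two results.
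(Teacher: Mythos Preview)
Your proposal is correct and matches the paper's own proof essentially verbatim: the paper simply states that the corollary follows directly from Proposition~\ref{freeaepn} in conjunction with Theorem~\ref{univhompoly}. Your elaboration of which direction of Theorem~\ref{univhompoly} is used where, and how Proposition~\ref{freeaepn} supplies the \AEPn from the free amalgamation hypothesis, is exactly the intended argument.
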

\begin{proof}
    This follows directly from Proposition~\ref{freeaepn}, in conjunction with Theorem~\ref{univhompoly}.
\end{proof}

\begin{example}
  The Rado-graph has universal homogeneous polymorphisms  of every arity, since its age is closed with respect to finite products, has the \HAP, and has the free amalgamation property. 
  
  For the same reasons, the countable universal homogeneous digraph and the countable universal homogeneous $k$-hypergraphs have universal homogeneous polymorphisms of all arities.
\end{example}

\subsection{The generic poset} \label{secGenPos} We are going to consider the countable generic poset $\mathbb{P}$ with both, the strict and the non-strict ordering.

The following construction of amalgamated free sums in the category of posets is folklore:
\begin{construction}
  Let $\bA$, $\bB_1$, $\bB_2$ be posets such that $\bA\le \bB_1$, $\bA\le\bB_2$, and such that $B_1\cap B_2 = A$. Define $C:= B_1\cup B_2$, $(\le_\bC):=(\le_{\bB_1})\cup (\le_{\bB_2})\cup\sigma\cup\tau$,  where
  \begin{align*}
    \sigma &= \{(b_1,b_2)\mid b_1\in B_1, b_2\in B_2,\exists a\in A:
    b_1\le_{\bB_1} a\le_{\bB_2} b_2\},\\
    \tau &=\{(b_2,b_1)\mid b_1\in B_1, b_2\in B_2,\exists a\in A:
    b_2\le_{\bB_2} a\le_{\bB_1} b_1\},
  \end{align*}
  and finally $\bC:=(C,\le_{\bC})$. Then $\bC= \bB_1\oplus_\bA \bB_2$. In particular, the following is a pushout-square in the category of posets:
  \[
  \begin{tikzcd}
    \bB_1 \rar[hook]{=}& \bC \\
    \bA  \pushout\uar[hook]{=}\rar[hook]{=}&\bB_2\uar[hook]{=}.
  \end{tikzcd}
  \]
  The construction for the amalgamated free sums of strict posets is completely analogous to the above given construction. We just need to replace every occurrence of $\le$ through $<$.
\end{construction}

\begin{lemma}\label{posetwellbehaved}
  The class of finite posets has well-behaved amalgamated free sums. The same is true for the class of finite strict posets.
\end{lemma}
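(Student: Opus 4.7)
The plan is to unpack both pushouts explicitly using the folklore construction just recalled, and then verify by hand that the mediating morphism $h$ is injective and order-reflecting. Without loss of generality I assume the standard ``disjoint union with identified amalgam'' presentation, so that the carrier of $\bB_i\oplus_{\bA_i}\bC_i$ is $B_i\cup C_i$ with $B_i\cap C_i=A_i$, and the carrier of the pushout $(\bB_1\times\bB_2)\oplus_{\bA_1\times\bA_2}(\bC_1\times\bC_2)$ is $(B_1\times B_2)\cup(C_1\times C_2)$ with intersection $A_1\times A_2$. Under these identifications, $h$ acts as the canonical inclusion on each of the two pieces $B_1\times B_2$ and $C_1\times C_2$ into $(B_1\cup C_1)\times(B_2\cup C_2)$.

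For injectivity I would simply observe that $(B_1\times B_2)\cap(C_1\times C_2)=(B_1\cap C_1)\times(B_2\cap C_2)=A_1\times A_2$ both in the domain and (as subsets of) the codomain, so $h$ does not collapse any pair of distinct points. For the order-reflection, I would split into three cases according to which piece $x$ and $y$ of the pushout domain lie in. If both lie in $B_1\times B_2$ (respectively both in $C_1\times C_2$) and $h(x)\le h(y)$, then each coordinate satisfies $x_i\le y_i$ in $\bB_i\oplus_{\bA_i}\bC_i$; since both $x_i$ and $y_i$ belong to $B_i$, the construction of the amalgamated free sum of posets forces $x_i\le_{\bB_i} y_i$ (the new pairs added in $\sigma$ and $\tau$ all straddle $B_i$ and $C_i$), so $x\le y$ in $\bB_1\times\bB_2$ and hence in the pushout.

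The interesting case is the mixed one, say $x=(b_1,b_2)\in B_1\times B_2$ and $y=(c_1,c_2)\in C_1\times C_2$, with $h(x)\le h(y)$. For each coordinate $i$ we have $b_i\le_{\bB_i\oplus_{\bA_i}\bC_i} c_i$ with $b_i\in B_i$, $c_i\in C_i$; by the explicit construction this means either $b_i=c_i\in A_i$ (take $a_i:=b_i$) or there exists $a_i\in A_i$ with $b_i\le_{\bB_i} a_i\le_{\bC_i} c_i$. In either case we obtain $a_i\in A_i$ with $b_i\le_{\bB_i}a_i$ and $a_i\le_{\bC_i}c_i$. Then $(a_1,a_2)\in A_1\times A_2$ satisfies $(b_1,b_2)\le_{\bB_1\times\bB_2}(a_1,a_2)\le_{\bC_1\times\bC_2}(c_1,c_2)$, which by the same construction applied to the amalgam $(\bB_1\times\bB_2)\oplus_{\bA_1\times\bA_2}(\bC_1\times\bC_2)$ gives $x\le y$ in the domain. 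The symmetric case ($x\in C_1\times C_2$, $y\in B_1\times B_2$) is handled identically using the $\tau$-part of the construction.

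I do not expect any real obstacle; the proof is essentially a bookkeeping exercise once the carriers and the relation-sets $\sigma,\tau$ are spelled out. The only mildly delicate point is making sure that elements of $A_i$ are treated consistently as living in both $B_i$ and $C_i$, which is why I fold the case $b_i=c_i\in A_i$ into the general witness statement by choosing $a_i:=b_i$. This yields that $h$ is an embedding, i.e., the class of finite posets has well-behaved amalgamated free sums.
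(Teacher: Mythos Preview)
Your proof is correct and follows essentially the same approach as the paper's: both arguments use the explicit folklore construction of the amalgamated free sum of posets, reduce to showing that the canonical inclusion of $(B_1\times B_2)\cup(C_1\times C_2)$ into $(B_1\cup C_1)\times(B_2\cup C_2)$ is an order-embedding, and handle the decisive mixed case by producing the witness $(a_1,a_2)\in A_1\times A_2$ coordinatewise. The only cosmetic difference is that the paper phrases the goal as showing $\bD\le\bC_1\times\bC_2$ (an induced-substructure statement) and runs the case analysis as chains of equivalences, whereas you explicitly separate injectivity from order-reflection; the content is the same.
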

\begin{proof}
\textbf{The case of finite posets:}
  Given finite posets $\bA_1$, $\bB_{1,1}$, $\bB_{1,2}$, $\bA_2$, $\bB_{2,1}$, $\bB_{2,2}$, such that $\bA_1\le\bB_{1,1}$, $\bA_1\le\bB_{1,2}$, $B_{1,1}\cap B_{1,2}=A_1$, $\bA_2\le\bB_{2,1}$, $\bA_2\le\bB_{2,2}$, $B_{2,1}\cap B_{2,2}=A_2$.
  
  Let $\bC_1:=\bB_{1,1}\oplus_{\bA_1}\bB_{1,2}$, $\bC_2:=\bB_{2,1}\oplus_{\bA_2}\bB_{2,2}$, and let $\bD:=(\bB_{1,1}\times\bB_{2,1})\oplus_{\bA_1\times\bA_2}(\bB_{1,2}\times\bB_{2,2})$. We will show that $\bD\le\bC_1\times\bC_2$.

  First we note
  \begin{align*}
    D&=
    B_{1,1}\times B_{2,1}\cup B_{1,2}\times B_{2,2} 
    \subseteq B_{1,1}\times B_{2,1}\cup B_{1,1}\times B_{2,2}\cup
    B_{1,2}\times B_{2,1}\cup B_{1,2}\times B_{2,2,} = C_1\times C_2.
  \end{align*}

  Now we will show that $(\le_{\bD}) = (\le_{\bC_1\times\bC_2})\cap D^2$.

  ``$\subseteq$:'' Let $(u_1,u_2), (v_1,v_2)\in D$, such that $(u_1,u_2)\le_{\bD}(v_1,v_2)$. If $(u_1,u_2),(v_1,v_2)\in B_{1,1}\times B_{2,1}$, then
  \begin{align*}
    (u_1,u_2)\le_{\bD}(v_1,v_2) &\iff
    (u_1,u_2)\le_{\bB_{1,1}\times\bB_{2,1}}(v_1,v_2)
    \iff u_1\le_{\bB_{1,1}} v_1 \land u_2\le_{\bB_{2,1}} v_2\\
    &\iff u_1\le_{\bC_1} v_1 \land u_2\le_{\bC_2} v_2
   \iff (u_1,u_2)\le_{\bC_1\times\bC_2}(v_1,v_2).
  \end{align*}
  Analogously, if $(u_1,u_2),(v_1,v_2)\in B_{1,2}\times B_{2,2}$, then $(u_1,u_2)\le_{\bD}(v_1,v_2)$ if and only if $(u_1,u_2)\le_{\bC_1\times\bC_2}(v_1,v_2)$.

  Suppose that $(u_1,u_2)\in B_{1,1}\times B_{2,1}$, $(v_1,v_2)\in B_{1,2}\times B_{2,2}$. Then
  \begin{align*}
        (u_1,u_2)\le_{\bD}(v_1,v_2) &\iff
         \exists (a_1,a_2)\in A_1\times A_2:
        (u_1,u_2)\le_{\bB_{1,1}\times\bB_{2,1}}
        (a_1,a_2)\le_{\bB_{1,2}\times\bB_{2,2}}(v_1,v_2)\\
        &\iff \exists (a_1,a_2)\in A_1\times A_2: u_1\le_{\bB_{1,1}} a_1 \le_{\bB_{1,2}} v_1 \land
        u_2\le_{\bB_{2,1}} a_2 \le_{\bB_{2,2}} v_2\\
        &\iff u_1\le_{\bC_1} v_1 \land u_2\le_{\bC_2} v_2\iff (u_1,u_2)\le_{\bC_1\times\bC_2}(v_1,v_2).
  \end{align*}
Analogously the case $(u_1,u_2)\in B_{1,2}\times B_{2,2}$, $(v_1,v_2)\in B_{1,1}\times B_{2,1}$ is handled.  

	\textbf{The case of finite strict posets:} This case is analogous to the previous one. As before, we only need to replace all occurrences of $\le$ by $<$.
\end{proof}

\begin{lemma}\label{posHAP}
    The classes of finite posets and of finite strict posets both have the HAP.
\end{lemma}
\begin{proof}
	In \cite{CamLoc10} and \cite{Mas07} the homomorphism homogeneous countable posets and strict posets are completely classified. From these classifications it can be read off that both, $(\mathbb{P},\le)$ and $(\mathbb{P},<)$, are homomorphism homogeneous. From \cite[Proposition 3.8]{Dol14} it follows that  their ages both have the \HAP. 
\end{proof}

\begin{corollary}\label{posnHAPn}
    The class of finite (strict) posets has the  \HAPn, for every $n\in\bN\setminus\{0\}$. 
\end{corollary}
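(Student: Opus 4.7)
The plan is essentially to invoke the remark that immediately follows the definition of $\HAPn$: if $\class{C}$ is closed under finite products, then $\HAPn$ for every $n \in \bN\setminus\{0\}$ is equivalent to $\HAP$. Since the class of finite posets is obviously closed under finite products (the cartesian power of a finite poset, equipped with the componentwise order, is again a finite poset), and since Lemma~\ref{posHAP} gives us the $\HAP$, the corollary follows.

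Concretely, given finite posets $\bA,\bB \in \class{C}$, an embedding $g\colon \bA\injto\bB$, a finite poset $\bT_1$, and a homomorphism $a\colon \bA^n\to\bT_1$, the first step is to observe that $\bA^n, \bB^n$ are again finite posets and that $g^n\colon \bA^n\injto\bB^n$ is an embedding. Then I would apply Lemma~\ref{posHAP} to the embedding $g^n\colon \bA^n\injto\bB^n$ and the homomorphism $a\colon \bA^n\to\bT_1$ to obtain a finite poset $\bT_2$, a homomorphism $b\colon \bB^n\to\bT_2$, and an embedding $h\colon \bT_1\injto\bT_2$ such that $b\circ g^n=h\circ a$. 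This is precisely the statement of $\HAPn$.

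There is no real obstacle here; the content of the corollary is wholly absorbed by Lemma~\ref{posHAP} together with the trivial observation that finite posets form a class closed under finite products. The only minor point worth recording in a full write-up is the verification that $g^n$ is indeed an embedding of posets, which is immediate from the componentwise definition of the order on products.
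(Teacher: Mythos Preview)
Your proposal is correct and follows essentially the same approach as the paper: both invoke closure of the class of finite posets under finite products together with Lemma~\ref{posHAP} (the \HAP), exactly as noted in the remark following the definition of \HAPn. The paper's proof is the one-line version of what you wrote.
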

\begin{proof}
    This follows directly from the fact that the class of  finite (strict) posets is closed under finite products and has the \HAP (cf. Lemma~\ref{posHAP}).
\end{proof}

\begin{corollary}\label{posnAEPn}
    The class of finite (strict) posets has the \AEPn, for every $n\in\bN\setminus\{0\}$. 
\end{corollary}
\begin{proof}
    This follows directly from Lemmas~\ref{posetwellbehaved}, \ref{posHAP} in conjunction with Proposition~\ref{UH6condAlg}.
\end{proof}

\begin{theorem}\label{genposetunivpoly}
  The generic posets  $(\mathbb{P},\le)$ and $(\mathbb{P},<)$ have  universal homogeneous polymorphisms of every arity.
\end{theorem}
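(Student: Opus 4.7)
The plan is to simply assemble the pieces already established in the excerpt and feed them into the general characterization provided by Theorem~\ref{univhompoly}. Recall that the generic poset $\bP$ is, by definition, the \Fraisse-limit of the class of finite posets, so it is countable and homogeneous, and its age is exactly the class of finite posets.

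Fix $n\in\bN\setminus\{0\}$. By Theorem~\ref{univhompoly}, $\bP$ has an $n$-ary universal homogeneous polymorphism if and only if $\Age(\bP)$ has both the \AEPn and the \HAPn. Corollary~\ref{posnAEPn} supplies the \AEPn for the class of finite posets (obtained from the explicit construction of free amalgamated sums of posets through Lemma~\ref{posetwellbehaved} and Proposition~\ref{UH6condAlg}), while Corollary~\ref{posnHAPn} supplies the \HAPn (obtained from the \HAP for finite posets together with closure under finite products). Applying Theorem~\ref{univhompoly} then immediately yields an $n$-ary universal homogeneous polymorphism of $\bP$. Since $n$ was arbitrary, the generic poset admits universal homogeneous polymorphisms of every arity.

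There is no genuine obstacle in this final step: all the hard work has been carried out in preparing the two corollaries. The only thing worth flagging is that Theorem~\ref{univhompoly} is stated for countable homogeneous structures, and one must note that the generic poset is indeed such a structure, with age equal to the class of all finite posets, so that the hypotheses of the corollaries align exactly with the hypotheses of the theorem.
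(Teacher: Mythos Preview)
Your proof is correct and follows essentially the same approach as the paper: identify the age of the generic poset as the class of finite posets, invoke Corollary~\ref{posnAEPn} for the \AEPn and Corollary~\ref{posnHAPn} for the \HAPn, and then apply Theorem~\ref{univhompoly}. The paper's proof is virtually identical, differing only in that it does not restate the provenance of the two corollaries.
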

\begin{proof}
     By Corollaries~\ref{posnAEPn}, \ref{posnHAPn} we have that both, $\Age(\mathbb{P},\le)$ and $\Age(\mathbb{P},<)$, have the \AEPn, and the \HAPn, for every $n\in\bN\setminus\{0\}$. Finally, by   Theorem~\ref{univhompoly}, $(\mathbb{P},<)$  and $(\mathbb{P},\le)$ have universal homogeneous polymorphisms of every arity.
\end{proof}

\section{Clones with automatic homeomorphicity}

\begin{theorem}
	Let $\bU$ be a countable homogeneous relational structure such that 
	\begin{enumerate}
		\item $\Pol(\bU)$ contains all constant functions,
		\item $\Age(\bU)$ has the free amalgamation property,
		\item $\Age(\bU)$ is closed with respect to finite products,
		\item $\Age(\bU)$ has the \HAP.
	\end{enumerate}
	Then $\Pol(\bU)$ has automatic homeomorphicity.
\end{theorem}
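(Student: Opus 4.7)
The plan is to follow the ``first strategy'' outlined earlier, using the corollary immediately following Proposition~\ref{autocontcriterion}. That corollary reduces the desired automatic homeomorphicity to two sub-tasks: (a) produce a strong gate covering of $\Pol(\bU)$, and (b) show that every clone-isomorphism from $\Pol(\bU)$ to the polymorphism clone of another structure on $U$ is open.

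For (a), conditions (ii)--(iv) are tailor-made to manufacture universal homogeneous polymorphisms of every finite arity. By Proposition~\ref{freeaepn}, the free amalgamation property of $\Age(\bU)$ alone implies the $\AEPn$ for every $n\in\bN\setminus\{0\}$. The closure of $\Age(\bU)$ under finite products together with the \HAP\ yields the $\HAPn$ for every $n$, as recorded in the remark following the definition of $\HAPn$. Theorem~\ref{univhompoly} therefore supplies an $n$-ary universal homogeneous polymorphism $u_n$ of $\bU$ for every $n\ge 1$, and Proposition~\ref{stronggateexistence} packages these into a strong gate covering of $\Pol(\bU)$ (built from the open cover $\{\Pol^{(n)}(\bU)\mid n\in\bN\setminus\{0\}\}$ with the gate $u_n$ on $\Pol^{(n)}(\bU)$).

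For (b), condition (i) is exactly what is needed: since $\Pol(\bU)$ contains all constant operations, Proposition~\ref{bbpopen} directly implies that every clone-isomorphism from $\Pol(\bU)$ to any other clone of functions is open. Assembling everything: given any structure $\bV$ on $U$ and any clone-isomorphism $h\colon\Pol(\bU)\to\Pol(\bV)$, openness of $h$ makes $h^{-1}\colon\Pol(\bV)\to\Pol(\bU)$ a continuous clone-isomorphism; because the codomain $\Pol(\bU)$ carries a strong gate covering, Proposition~\ref{autocontcriterion} promotes $h^{-1}$ to a homeomorphism, and hence so is $h$.

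There is no genuine obstacle in this argument; it is essentially a concatenation of results already established in the paper, with each of the four hypotheses corresponding to exactly one ingredient: constants drive openness via Proposition~\ref{bbpopen}; free amalgamation drives the $\AEPn$; finite products together with \HAP\ drive the $\HAPn$; and the last two jointly drive the existence of universal homogeneous polymorphisms, and hence of the strong gate covering. The only mildly subtle point is to notice that the unqualified phrase ``automatic homeomorphicity'' refers to isomorphisms into polymorphism clones of structures on the same carrier $U$, which is precisely the setting in which Propositions~\ref{autocontcriterion} and~\ref{bbpopen} are applied.
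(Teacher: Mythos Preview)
Your proposal is correct and follows essentially the same route as the paper's own proof: use Proposition~\ref{bbpopen} for openness, combine free amalgamation with the $\HAPn$ (from \HAP\ plus closure under products) to obtain universal homogeneous polymorphisms and hence a strong gate covering via Proposition~\ref{stronggateexistence}, and then invoke Proposition~\ref{autocontcriterion} on $h^{-1}$. The only cosmetic difference is that the paper packages the step from free amalgamation plus $\HAPn$ to universal homogeneous polymorphisms through Corollary~\ref{univpolyfree}, whereas you cite Proposition~\ref{freeaepn} and Theorem~\ref{univhompoly} separately; these amount to the same thing.
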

\begin{proof}
	Let $h$ be an isomorphism of $\Pol(\bU)$ to the polymorphism clone of another countable structure. Since $\Pol(\bU)$ contains all constant functions, it follows from Proposition~\ref{bbpopen}, that $h$ is open. Since $\Age(\bU)$ has the \HAP, and  is closed with respect to finite products, it follows that it has the \HAPn, for all $n\in\bN\setminus\{0\}$. Thus, since $\Age(\bU)$ has the free amalgamation property, it follows from Corollary~\ref{univpolyfree}, that $\bU$ has universal homogeneous polymorphisms of all arities. Thus, by Proposition~\ref{stronggateexistence}, it follows that $\Pol(\bU)$ has a strong gate covering. Since $h$ is open, it follows form Proposition~\ref{autocontcriterion}, that $h$ is a homeomorphism. 
\end{proof}

\begin{corollary}
	The polymorphism clones of the following structures have automatic homeomorphicity:
	\begin{itemize}
		\item the structure $(\bN,=)$ (shown already in \cite[Corollary 28]{BodPinPon17}),
		\item the Rado graph with all loops added,
		\item the universal homogeneous digraph with all loops added,
	\end{itemize}
\end{corollary}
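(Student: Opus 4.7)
The plan is to apply the preceding theorem to each of the three structures in turn, so the work reduces entirely to verifying its four hypotheses on the age case by case. All three structures are countable and homogeneous: the latter two are the \Fraisse limits of the classes $\class{C}_1$ of finite reflexive graphs and $\class{C}_2$ of finite reflexive digraphs, respectively (adding all loops at every vertex of a finite (di)graph preserves the homogeneity of the corresponding \Fraisse limit, since any isomorphism between finite substructures automatically respects the loops when they are ubiquitous).

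For $(\bN,=)$ all four hypotheses are essentially vacuous: since the signature carries no basic relations, every finitary operation on $\bN$ is a polymorphism, and the age (all finite sets) trivially has free amalgamation, is closed under finite products, and has the \HAP. For the Rado graph with all loops added and the universal homogeneous digraph with all loops added, the corresponding ages are $\class{C}_1$ and $\class{C}_2$. The ``loops added'' modification is exactly what forces the first hypothesis: a constant operation with value $a$ is a polymorphism because the loop at $a$ witnesses that $(a,\dots,a)$ lies in the edge relation. Free amalgamated sums of reflexive (di)graphs are reflexive and introduce no new edges within either factor, so the age is closed under free amalgamation. Closure under direct products is immediate, since products of reflexive structures are reflexive.

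The one point of real content is the \HAP for $\class{C}_1$ and $\class{C}_2$, and that is where I expect most of the care to be needed. Given an embedding $g\colon \bA\injto\bC$ and a homomorphism $f\colon \bA\to\bB$ in the class, define $\iota\colon C\to B\cup(C\setminus g(A))$ by $\iota(g(a)):=f(a)$ and $\iota(c):=c$ for $c\notin g(A)$, and let $\bD$ have carrier $B\cup(C\setminus g(A))$ and edge set the union of the edges of $\bB$ with the $\iota$-images of the edges of $\bC$. Then $\bD$ is reflexive and finite, hence in the class; the inclusion $\hat g\colon \bB\injto\bD$ is an embedding; and $\hat f:=\iota\colon \bC\to\bD$ is a homomorphism satisfying $\hat g\circ f = \hat f\circ g$. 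The delicate point, which is the main potential obstacle, is that $\hat g$ remains an embedding rather than merely a homomorphism: any edge of $\bD$ with both endpoints in $B$ must come from a $\bC$-edge between two vertices of $g(A)$, which since $g$ is an embedding corresponds to an $\bA$-edge, which since $f$ is a homomorphism already lies in $\bB$. With the four hypotheses verified in all three cases, the theorem yields automatic homeomorphicity for each polymorphism clone.
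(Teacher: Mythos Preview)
Your proposal is correct and takes essentially the same approach as the paper: the corollary is stated without proof as an immediate application of the preceding theorem, so the task is precisely to verify the four hypotheses for each structure, which you do. Your write-up is in fact more detailed than the paper's, supplying in particular an explicit construction for the \HAP{} in the reflexive (di)graph case; the only cosmetic point is that your amalgam $B\cup(C\setminus g(A))$ tacitly assumes $B$ and $C\setminus g(A)$ are disjoint, which is the usual harmless WLOG.
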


\begin{theorem}
    The polymorphism clone of the generic poset $(\mathbb{P},\le)$ has automatic homeomorphicity.
\end{theorem}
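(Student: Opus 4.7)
The plan is to mirror the proof of the immediately preceding theorem on free homogeneous structures, with Theorem~\ref{genposetunivpoly} taking the place of Corollary~\ref{univpolyfree}. In that theorem, the hypotheses of free amalgamation, closure under finite products, and the \HAP are used only as a route to guarantee universal homogeneous polymorphisms of all arities, and from there a strong gate covering. For the generic poset we already have universal homogeneous polymorphisms of every arity in hand, so the detour through free amalgamation is unnecessary. The two ingredients that remain to be assembled are openness of every isomorphism out of $\Pol(\mathbb{P})$ and a strong gate covering of $\Pol(\mathbb{P})$.

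First I would observe that every constant function on $\mathbb{P}$ is a polymorphism of $(\mathbb{P},\le)$. Since the basic relation $\le$ is reflexive, if $f\equiv c$ is a constant and $(x_1,\dots,x_n)\le (y_1,\dots,y_n)$ componentwise, then trivially $f(x_1,\dots,x_n)=c\le c=f(y_1,\dots,y_n)$. Hence all constants on $\mathbb{P}$ lie in $\Pol(\mathbb{P})$. Applying Proposition~\ref{bbpopen}, every clone isomorphism $h\colon \Pol(\mathbb{P})\to\clone{D}$ onto another clone of functions is therefore open.

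Next I would invoke Theorem~\ref{genposetunivpoly} to obtain an $n$-ary universal homogeneous polymorphism $u_n$ of $(\mathbb{P},\le)$ for every $n\in\bN\setminus\{0\}$. By Proposition~\ref{stronggateexistence}, choosing the covering $\cover{U}=\{\Pol^{(n)}(\mathbb{P})\mid n\in\bN\setminus\{0\}\}$ with gates $u_n$, we conclude that $\Pol(\mathbb{P})$ has a strong gate covering. Now let $h\colon \Pol(\mathbb{P})\to\Pol(\bV)$ be an arbitrary clone isomorphism onto the polymorphism clone of a countable relational structure $\bV$. By the openness just established, the inverse $h^{-1}\colon \Pol(\bV)\to\Pol(\mathbb{P})$ is continuous. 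Because $\Pol(\mathbb{P})$ carries a strong gate covering, Proposition~\ref{autocontcriterion} applies to $h^{-1}$ and yields that $h^{-1}$ is a homeomorphism; hence so is $h$.

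There is essentially no new obstacle inside this final theorem itself: the substantive work has already been done in Section~\ref{exStrongGates} and in Theorem~\ref{genposetunivpoly}, whose proof rests on the strict amalgamation property together with the well-behavedness of amalgamated free sums in the category of finite posets (Lemma~\ref{posetwellbehaved}) and on Proposition~\ref{UH6condAlg}. The only point requiring a moment of vigilance is that the theorem is about the poset with the \emph{reflexive} relation $\le$, not with the strict $<$; this reflexivity is precisely what lets constant functions count as polymorphisms and thereby unlocks Proposition~\ref{bbpopen}.
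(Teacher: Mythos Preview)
Your proof is correct and follows essentially the same route as the paper: constants are polymorphisms (using reflexivity of $\le$) so Proposition~\ref{bbpopen} gives openness, Theorem~\ref{genposetunivpoly} plus Proposition~\ref{stronggateexistence} gives the strong gate covering, and Proposition~\ref{autocontcriterion} finishes. Your explicit passage through $h^{-1}$ when invoking Proposition~\ref{autocontcriterion} is in fact slightly more careful than the paper's phrasing, since that proposition is stated for a continuous isomorphism \emph{into} the clone with the strong gate covering.
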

\begin{proof}
    Let $h$ be an isomorphism from $\Pol(\mathbb{P},\le)$ to the polymorphism clone of another countable  structure. 
    
    Clearly, all constant functions are polymorphisms of $(\mathbb{P},\le)$. Thus, by Proposition~\ref{bbpopen},  $h$ is open.

    By Theorem~\ref{genposetunivpoly}, $(\mathbb{P},\le)$ has universal homogeneous polymorphisms of all arities. By Proposition~\ref{stronggateexistence}, $\Pol(\mathbb{P},\le)$ has a strong gate covering. 
    Since $h$ is open, by Proposition~\ref{autocontcriterion}, $h$ is a homeomorphism. 
\end{proof}

\begin{theorem}\label{thstrat2}
	Let $\bU$ be a countable $\omega$-categorical homogeneous relational structure and let $\class{K}$ be a set of structures on $U$, containing $\bU$. Suppose that
	\begin{enumerate}
		\item $\Aut(\bU)$ acts transitively on $U$,
		\item $\overline{\Aut(\bU)}$ has automatic homeomorphicity with respect to  $\class{K}$,
		\item $\Age(\bU)$ has the free amalgamation property,
		\item $\Age(\bU)$ is closed with respect to finite products,
		\item $\Age(\bU)$ has the \HAP.
	\end{enumerate}
	Then $\Pol(\bU)$ has automatic homeomorphicity with respect  to $\class{K}$. 
\end{theorem}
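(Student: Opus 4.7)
The plan is to execute the second strategy outlined in the introduction, assembling the machinery already developed in the paper.

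Starting with the structural input, conditions (3)--(5) together imply that $\Age(\bU)$ has the \HAPn for every $n\in\bN\setminus\{0\}$: under closure with respect to finite products, the $\HAPn$ is equivalent to the $\HAP$. Combined with the free amalgamation property, Corollary~\ref{univpolyfree} furnishes an $n$-ary universal homogeneous polymorphism of $\bU$ for every $n$, and Proposition~\ref{stronggateexistence} then supplies a strong gate covering of $\Pol(\bU)$. Next, I would observe that hypotheses (1), (3), (4), (5) are precisely those required by Proposition~\ref{autohomhap}, so every continuous clone-isomorphism from $\Pol(\bU)$ to another closed subclone of $\clone{O}_U$ is automatically a homeomorphism. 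The whole argument thus collapses to proving that every abstract clone-isomorphism $h\colon \Pol(\bU)\to \Pol(\bV)$ with $\bV\in\class{K}$ is continuous.

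For continuity, I would invoke Lemma~\ref{liftcont}: since $\Pol(\bU)$ has a strong gate covering, $h$ is continuous as soon as its restriction $h\restr_{\overline{\Aut(\bU)}}\colon \overline{\Aut(\bU)}\to\End(\bV)$ is continuous. Because $h$ is a clone-isomorphism, this restriction is an injective monoid-homomorphism, hence a monoid-isomorphism onto its image $h(\overline{\Aut(\bU)})\subseteq \End(\bV)$. This is where assumption (2), the automatic homeomorphicity of $\overline{\Aut(\bU)}$ with respect to $\class{K}$, is meant to deliver the desired continuity directly.

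The main obstacle, and the step where most of the real work lives, is to verify that the image $h(\overline{\Aut(\bU)})$ is a \emph{closed} submonoid of $\End(\bV)$, since the statement of (2) is formulated only for monoid-isomorphisms onto closed submonoids. Because $h$ maps units to units, one has $h(\Aut(\bU))=\Aut(\bV)$ and therefore $h(\overline{\Aut(\bU)})\supseteq \Aut(\bV)$; I would aim to strengthen this to the equality $h(\overline{\Aut(\bU)})=\overline{\Aut(\bV)}$ by exploiting the $\omega$-categoricity and homogeneity of $\bU$ to give an intrinsic, algebraic characterization of $\overline{\Aut(\bU)}$ inside $\End(\bU)$ (for instance as the endomorphisms that preserve every $\Aut(\bU)$-orbit on finite tuples) that an abstract clone-isomorphism must respect. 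Once this is settled, (2) gives continuity of $h\restr_{\overline{\Aut(\bU)}}$, Lemma~\ref{liftcont} promotes it to continuity of $h$, and Proposition~\ref{autohomhap} finishes the proof.
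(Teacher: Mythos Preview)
Your approach is essentially the paper's: derive the strong gate covering from (3)--(5) via Corollary~\ref{univpolyfree} and Proposition~\ref{stronggateexistence}, use assumption~(2) to get continuity of $h\restr_{\overline{\Aut(\bU)}}$, lift to continuity of $h$ by Lemma~\ref{liftcont}, and conclude with Proposition~\ref{autohomhap}.

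Where you diverge is in scrupulousness. The paper's proof simply asserts that ``since $\overline{\Aut(\bU)}$ has automatic homeomorphicity, it follows that the restriction of $h$ to $\overline{\Aut(\bU)}$ is a topological embedding'' and moves on. You correctly notice that, per the paper's own definition, automatic homeomorphicity of $\overline{\Aut(\bU)}$ with respect to $\class{K}$ is only formulated for monoid-isomorphisms onto \emph{closed} submonoids of $\End(\bV)$, and that closedness of $h(\overline{\Aut(\bU)})$ is not automatic for an abstract clone-isomorphism $h$. The paper does not address this point; it is a genuine gap in the paper's proof as written, and you are right to flag it.

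Your proposed repair --- an intrinsic algebraic characterisation of $\overline{\Aut(\bU)}$ inside $\End(\bU)$ that any monoid-isomorphism must preserve, forcing $h(\overline{\Aut(\bU)})=\overline{\Aut(\bV)}$ --- is the natural idea, but you leave it as a sketch, and it is not obvious that such a characterisation exists in the stated generality. (Knowing $h(\Aut(\bU))=\Aut(\bV)$ only gives $\overline{\Aut(\bV)}\subseteq \overline{h(\overline{\Aut(\bU)})}$, not the reverse inclusion you need.) So your proof, like the paper's, is complete modulo this one step; you have simply been honest about where the difficulty lies.
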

In the proof we are going to make use of the following auxiliary result:
\begin{lemma}[{\cite[Lemma 4.1]{BehTruVar17}}]\label{closed}
	Let $\bU$ be a countable relational structure such that $\overline{\Aut(\bU)}$ has automatic homeomorphicity. Let $h\colon \End(\bU)\to \monoid{M}$ be a monoid-isomorphism to another closed transformation monoid on $U$. Then $h(\overline{\Aut(\bU)})$ is closed in $\monoid{M}$ and the restriction of $h$ to $\overline{\Aut(\bU)}$ is a topological embedding.
\end{lemma}

\begin{proof}[Proof of Theorem~\ref{thstrat2}]
	Let $h$ be an isomorphism from $\Pol(\bU)$ to the polymorphism clone of a member of $\class{K}$. Since $\Age(\bU)$ has the \HAP, and  is closed with respect to finite products, it follows that it has the \HAPn, for all $n\in\bN\setminus\{0\}$. Thus, since $\Age(\bU)$ has the free amalgamation property, it follows from Corollary~\ref{univpolyfree}, that $\bU$ has universal homogeneous polymorphisms of all arities. Thus, by Proposition~\ref{stronggateexistence}, it follows that $\Pol(\bU)$ has a strong gate covering. Since $\overline{\Aut(\bU)}$ has automatic homeomorphicity, it follows from Lemma~\ref{closed} that the restriction of $h$ to $\overline{\Aut(\bU)}$ is a topological embedding. In particular, $h\restr_{\overline{\Aut(\bU)}}$ is continuous. Consequently, since $\Pol(\bU)$ has a strong gate covering, it follows from Lemma~\ref{liftcont} that $h$ is continuous, too. Thus, since $\bU$ is $\omega$-categorical, $\Aut(\bU)$ acts transitively on $U$, $\Age(\bU)$ has the free amalgamation property, $\Age(\bU)$ is closed with respect to finite products, and $\Age(\bU)$ has the \HAP, it follows from Proposition~\ref{autohomhap} that $h$ is a homeomorphism.
\end{proof}
\begin{corollary}
	Let $\bU$ be a countable $\omega$-categorical homogeneous relational structure and let $\class{K}$ be a set of structures on $U$, containing $\bU$. Suppose that
	\begin{enumerate}
		\item $\Aut(\bU)$ acts transitively on $U$, but $
		\Aut(\bU)$ is not the full symmetric group,
		\item $\Aut(\bU)$ has automatic homeomorphicity with respect to  $\class{K}$,
		\item $\Age(\bU)$ has the free amalgamation property,
		\item $\Age(\bU)$ is closed with respect to finite products,
		\item $\Age(\bU)$ has the \HAP.
	\end{enumerate}
	Then $\Pol(\bU)$ has automatic homeomorphicity with respect  to $\class{K}$. 
\end{corollary}
\begin{proof}
	We only need to show that $\overline{\Aut(\bU)}$ has automatic homeomorphicity with respect to $\class{K}$, in order to be able to invoke Theorem~\ref{thstrat2}: Since $\bU$ is $\omega$-categorical, it follows that $\bU$ is $\omega$-saturated. Since $\Aut(\bU)$ is transitive and $\Age(\bU)$ has the free amalgamation property, and since $\Aut(\bU)$ is not the full symmetric group on $U$, we conclude using \cite[Theorem 4.2.7]{Mac11} that $\Aut(\bU)$ is simple. In particular, it has a trivial center. Now, from Corollary~\ref{newmonoidauthom} it follows, that $\overline{\Aut(\bU)}$ has automatic homeomorphicity with respect to $\class{K}$.
\end{proof}

\begin{example}
	The polymorphism clones of the following countably infinite structures have automatic homeomorphicity:
	\begin{itemize}
		\item the Rado-graph (shown already in \cite[Theorem 52]{BodPinPon17}),
		\item the universal homogeneous digraph,
		\item the universal homogeneous $k$-uniform hypergraph (for all $k\ge 2$),
	\end{itemize}
\end{example}

Our last result concerns once more the generic poset:
\begin{theorem}\label{genposauthom}
	The polymorphism clone of the generic poset $(\mathbb{P},<)$ has automatic homeomorphicity with respect to the class of countable $\omega$-categorical structures.
\end{theorem}
Before we can prove this result, we need to adapt Proposition~\ref{autoopencriterion} to the case of the generic poset:
\begin{proposition}\label{genposautoopen}
	Let $\bU$ be a countable structure. Then every continuous isomorphism from $\Pol(\mathbb{P},<)$ to $\Pol(\bU)$ is a homeomorphism.
\end{proposition}
\begin{proof}
	The proof is virtually identical to the proof of Proposition~\ref{autoopencriterion}, and we are not going to repeat it in detail. Let $\algebra{S}\le(\mathbb{P},\Pol(\mathbb{P},<))^n$, and let $\sim$ be a congruence relation of $\algebra{S}$ with at least two classes. We are going to show that there exists some $i\in\{1,\dots,n\}$ such that $\tu\sim\tv\Rightarrow u_i=v_i$ holds for all $\tu,\tv\in \algebra{S}$. Once we succeed to show this, the rest of the proof is identical to the proof of Proposition~\ref{autoopencriterion}.
	
	By \cite[Theorem 6.29]{PecPec15}, $(\mathbb{P},<)$ is polymorphism homogeneous. Thus, by \cite[Corollary 3.13]{PecPec15}, has quantifier elimination for primitive positive formulae. Combining this with \cite[Theorem 4]{BodNes06}, we obtain that every invariant relation of $\Pol(\mathbb{P},<)$ is definable in $(\mathbb{P},<)$ by a set of atomic formulae.
	
	The carrier $S$ of $\algebra{S}$ is an $n$-ary invariant relation of $\Pol(\mathbb{P},<)$. The relation $\sigma^\sim=\{\tu\tv\mid\tu\sim\tv\}$ is a $2n$-ary invariant relation of $\Pol(\mathbb{P},<)$. Without loss of generality, $S$ contains at least one irreflexive tuple. Let $\Phi:=\Tpp_{(\mathbb{P},<)}(\sigma^\sim)$ and let $\Psi:= \Tpp_{(\mathbb{P},<)}(S)$. Since $\sim$ has at least two classes, in $\Phi$ there must exist $i,j\in\{1,\dots,n\}$ such that at least one of the atoms $x_i=y_j$, $x_i<y_j$, or $y_i<x_j$ is in $\Phi$. We are going to show that all atoms in $\Phi$ are of the shape $x_i=y_i$, by ruling out all other possibilities:
	
	 If $\Phi$ contains an atom of the shape $x_i=y_j$ for $i\neq j$, then, because $\sim$ is reflexive, we have that $x_i=x_j$ is in $\Psi$, a contradiction to our assumption about $S$. 
	 
	 Suppose $\Phi$ contains an atom of the shape $x_i<y_j$. Then $i\neq j$, since otherwise, by reflexivity of $\sim$ it would follow that $x_i<x_i$ is in $\Psi$. Without loss of generality, we can assume that there is no $k\in\{1,\dots,n\}$ such that $x_i<x_k\land x_k<y_j$ is in $\Phi$. By Lemma~\ref{oneGeneratedRel}, there exist $\tu,\tv\in S$ such that $\tu\tv\in\sigma^\sim$, and such that $\Tpp_{(\mathbb{P},<)}(\tu\tv)=\Phi$. Moreover, $\Tpp_{(\mathbb{P},<)}(\tu)=\Tpp_{(\mathbb{P},<)}(\tv)=\Psi$. Let $\bU:=\langle u_1,\dots,u_n\rangle_{(\mathbb{P},<)}$, $\bW:=\langle u_1,\dots,u_n,v_1,\dots,v_n\rangle_{(\mathbb{P},<)}$. Let $\bW'$ be an isomorphic copy of $\bW$, such that $W'=U\cup\{v_1',\dots,v_n'\}$, $W\cap W'=U$, and such that $\iota\colon \bW\to\bW'$ defined by $u_i\mapsto u_i$, $v_i\mapsto v_i'$ ($i\in\{1,\dots,n\})$ is an isomorphism. Consider $\bW\oplus_{\bU}\bW'$ (cf. Section~\ref{secGenPos}). Without loss of generality, $\bW\oplus_\bU\bW'\le(\mathbb{P},<)$. Let $\tv':=(v_1',\dots,v_n')$. By construction we have $\Tpp^{(0)}_{(\mathbb{P},<)}(\tu\tv)=\Tpp^{(0)}_{(\mathbb{P},<)}(\tu\tv')$. Since $(\mathbb{P},<)$ has quantifier elimination for primitive positive formulae, it follows that $\Tpp_{(\mathbb{P},<)}(\tu\tv)=\Tpp_{(\mathbb{P},<)}(\tu\tv')$. It follows that $\tu\sim\tv'$. Since $\sim$ is symmetric and transitive, we have $\tv\sim\tv'$. In particular, $v_i<v_j'$. By the construction of amalgamated free sums there exists some $k\in\{1,\dots,n\}$ such that $v_i<u_k<v_j'$. From $v_i<u_k$ it follows that $y_i<x_k$ is in $\Phi$. Hence, by reflexivity of $\sim$ we have that $x_i<x_k$ is in $\Psi$. In particular, $v_i<v_k$. Moreover, from $u_k<v_j'$ it follows that $x_k<y_j$ is in $\Phi$. Since $\Psi\subseteq\Phi$, we have $x_i<x_k\land x_k<y_j$ is in $\Phi$, a contradiction to the choice of $i$ and $j$. 
	 
	 Suppose, $\Phi$ contains an atom of the shape $y_i<x_j$. Then, by symmetry of $\sim$, $\Phi$ contains also the atom $x_i<y_j$. This case was already excluded.
	 
	 Thus, $\Phi$ contains only atoms of the shape $x_i=y_i$ for certain $i\in\{1,\dots,n\}$. 	
\end{proof}
\begin{proof}[Proof of Theorem~\ref{genposauthom}]
	It was shown by Rubin in \cite{Rub94}, that $(\mathbb{P},<)$ has a weak $\forall\exists$-interpretation. In the same paper Rubin showed that from the existence of a weak $\forall\exists$-interpretation it follows that the automorphism group has automatic homeomorphicity with respect to the class $\class{K}$ of countable $\omega$-categorical structures (cf.~\cite[Proposition 1.1.10]{BarPhD}). In particular, $\Aut(\mathbb{P},<)$ has automatic homeomorphicity with respect to $\class{K}$. Glass, McCleary and Rubin showed in  \cite[Theorem 1]{GlaMcCRub93} that $\Aut(\mathbb{P},<)$ is simple. In particular, it has a trivial center. Since $(\mathbb{P},<)$ is $\omega$-categorical, it is  saturated. Thus, from Corollary~\ref{newmonoidauthom} it follows that $\overline{\Aut(\mathbb{P},<)}$ has automatic homeomorphicity with respect to $\class{K}$.

	Let $h$ be an isomorphism from $\Pol(\mathbb{P},<)$ to the polymorphism clone of a member of $\class{K}$.

	By Theorem~\ref{genposetunivpoly}, $(\mathbb{P},<)$ has universal homogeneous polymorphisms of every arity. Thus, by Proposition~\ref{stronggateexistence}, it follows that $\Pol(\mathbb{P},<)$ has a strong gate covering. Since $\overline{\Aut(\mathbb{P},<)}$ has automatic homeomorphicity, it follows from Lemma~\ref{closed} that the restriction of $h$ to $\overline{\Aut(\mathbb{P},<)}$ is a topological embedding. In particular, $h\restr_{\overline{\Aut(\mathbb{P},<)}}$ is continuous. Since $\Pol(\mathbb{P},<)$ has a strong gate covering, it follows from Lemma~\ref{liftcont} that $h$ is continuous, too. 
		
	It remains to invoke Proposition~\ref{genposautoopen} to conclude that $h$ is a homeomorphism.
\end{proof}

%\bibliographystyle{abbrvurl} %abbrvurl spmpsci
%\bibliography{PP11}

\end{document}